\newtheorem{lemma}{Lemma}[section]
\newtheorem{theorem}{Theorem}[section]
\newtheorem{proposition}{Proposition}[section]
\numberwithin{equation}{section}
\newcommand{\dis}{\displaystyle}
\newcommand{\R}{\mathbb{R}}
\newcommand{\Z}{\mathbb{Z}}
\renewcommand{\S}{\mathbb{S}}
\newcommand{\FP}{\mathbf{P}}
\newcommand{\FY}{\mathbf{Y}}
\newcommand{\Fb}{\mathbf{b}}
\newcommand{\Fk}{\mathbf{k}}
\newcommand{\CA}{\mathcal{A}}
\newcommand{\CF}{\mathcal{F}}
\newcommand{\CG}{\mathcal{G}}
\newcommand{\CH}{\mathcal{H}}
\newcommand{\CI}{\mathcal{I}}
\newcommand{\CJ}{\mathcal{J}}
\newcommand{\CK}{\mathcal{K}}
\newcommand{\CL}{\mathcal{L}}
\newcommand{\CN}{\mathcal{N}}
\newcommand{\CQ}{\mathcal{Q}}
\newcommand{\na}{\nabla}
\newcommand{\al}{\alpha}
\newcommand{\ga}{\gamma}
\newcommand{\om}{\omega}
\newcommand{\la}{\lambda}
\newcommand{\de}{\delta}
\newcommand{\si}{\sigma}
\newcommand{\pa}{\partial}
\newcommand{\ta}{\theta}
\newcommand{\vth}{\vartheta}
\newcommand{\vho}{\varrho}
\newcommand{\vps}{\varepsilon}
\newcommand{\ze}{\zeta}
\newcommand{\Ga}{\Gamma}
\newcommand{\lag}{\langle}
\newcommand{\rag}{\rangle}
\begin{document}

\title[Uniform shear flow with hard potentials]{Uniform shear flow via the Boltzmann equation with hard potentials}

\author[R.-J. Duan]{Renjun Duan}
\address[RJD]{Department of Mathematics, The Chinese University of Hong Kong,
Shatin, Hong Kong, P.R.~China}
\email{rjduan@math.cuhk.edu.hk}

\author[S.-Q. Liu]{Shuangqian Liu}
\address[SQL]{School of Mathematics and Statistics and Hubei Key Laboratory of Mathematical Sciences, Central China Normal University, Wuhan 430079, P.R.~China}
\email{sqliu@ccnu.edu.cn}

\subjclass[2020]{35Q20, 35B40}


\keywords{Boltzmann equation, deformation force, hard potentials, global existence, long time asymptotics}
\maketitle

\begin{abstract}
The motion of rarefied gases for uniform shear flow at the kinetic level is  governed by the spatially homogeneous Boltzmann equation with a deformation force. In the paper we study the corresponding Cauchy problem with initial data of finite mass and energy for the collision kernel in case of hard potentials $0<\gamma\leq 1$ under the cutoff assumption. We prove the global existence and large time behavior of solutions provided that the force strength $\alpha>0$ is small enough. In particular, when the initial perturbation is of order $\alpha^m$ for $m>2$, we make a rigorous justification of the uniform-in-time asymptotic expansion of solutions up to order $\alpha^2$ under a homoenergetic self-similar scaling that can capture the increase of temperature $\theta(t)\sim  (1+\ga \varrho_0\al^2 t)^{2/\gamma}$ when time tends to infinity, where $\varrho_0>0$ is a strictly positive constant depending only on the deformation force and the linearized collision operator. Specifically, we establish
$$
\theta^{3/2}(t)F(t,\theta^{1/2}(t)v)= \mu+\alpha \sqrt{\mu} G_1(t,v)+\alpha^2 \sqrt{\mu}G_2(t,v)+O(1)\alpha^m(1+\ga \varrho_0\al^2 t)^{-2}
$$
as $t\to\infty$,
where $\mu$ is a global Maxwellian and $G_1,G_2$ are microscopic bounded functions that can be explicitly determined and decay in time as $G_1\sim (1+\ga \varrho_0\al^2 t)^{-1}$ and $G_2\sim (1+\ga \varrho_0\al^2 t)^{-2}$.
\end{abstract}

%



\section{Intoduction}

\subsection{Problem}
For a rarefied gas, the uniform shear flow (USF in short) is characterized at a macroscopic level as a state where the density is constant, the velocity at $x\in \R^3$ takes the linear form of $\alpha Ax$ corresponding to a constant deformation matrix $A=(a_{ij})\in M_{3\times 3}(\R)$ with strength $\alpha>0$, and the temperature is uniform in space but may depend on time. For simplicity we always suppose ${\rm tr}A=0$. Due to the shearing motion and the associated viscous heating, the total energy and hence the temperature monotonically increase in time. It is more fundamental to understand the change of energy under the deformation force at the kinetic level (cf.~\cite{GaSa}) where the gas motion is governed by the Boltzmann equation for a finite Knudsen number
\begin{equation}
\pa_t F+v\cdot\nabla_x F=Q(F,F).\notag
\end{equation}
Here $F=F(t,x,v)\geq 0$ stands for the density distribution function of gas particles with velocity $v$ at time $t$ and position $x$ and $Q$ is the collision operator to be specified later. In this context the kinetic USF state is then defined as the one that is spatially homogeneous when the velocities of particles are referred to a Lagrangian frame moving with the velocity field $\alpha Ax$. Consequently, the density distribution function has the form
$F(t,x,v)=F(t,v-\alpha Ax)$.
With this ansatz the Boltzmann equation above becomes
\begin{align}\label{F-eq}
\pa_t F-\al\na_v\cdot(AvF)=Q(F,F),
\end{align}
for a spatially homogeneous unknown function $F=F(t,v)\geq 0$. Here, the bilinear collision operator $Q(\cdot,\cdot)$ is given as
\begin{equation}\label{Q-op}
Q(F_1,F_2)=\int_{\R^3}\int_{\S^2}B(\om,v-v_\ast)[F_1(v_\ast')F_2(v')-F_1(v_\ast)F_2(v)]\,d\omega dv_\ast.
\end{equation}
In the integral we have denoted $v'=v+[(v_\ast-v)\cdot\omega]\om$ and $v_\ast'=v_\ast-[(v_\ast-v)\cdot\omega]\om$ with $\omega\in \S^2$ in terms of the conservation laws $v_\ast+v=v_\ast'+v'$ and $|v_\ast|^2+|v|^2=|v_\ast'|^2+|v'|^2$. Throughout this paper, we define 
\begin{eqnarray}
&\dis B(\om,v-v_\ast)=|v-v_\ast|^\ga B_0(\cos\ta),\ \cos\ta=\om\cdot\frac{v-v_\ast}{|v-v_\ast|},\ \om\in\S^2,\label{hard-sp}\\
&\dis 0\leq\ga\leq1,\ 0\leq B_0(\ta)\leq C|\cos\ta|.\label{hd-po}
\end{eqnarray}
This includes general hard potentials under the Grad's angular cutoff assumption (cf.~\cite{CerBook}).

In the paper, we are interested in studying the global existence and long time behavior of solutions to the spatially homogeneous Boltzmann equation \eqref{F-eq} supplemented with initial data
\begin{align}\label{F-id}
F(0,v)=F_0(v),
\end{align}
which has finite mass and energy. Since the case of $\ga=0$ has been considered in our previous work \cite{DL-arma-2021}, we restrict our attentions in this paper to the only case of $0<\gamma\leq 1$. The problem in case of $0<\gamma\leq 1$ was addressed in  \cite{JNV-ARMA, JNV-JNS} by James, Nota and Vel\'azquez; see also a recent survey \cite{NV}. More related results will be reviewed later on.

\subsection{Normal solution}
To solve the Cauchy problem \eqref{F-eq} and \eqref{F-id}, we consider the normal solution under a certain scaling such that the profile has conservative mass, momentum and energy for all nonnegative time. For the purpose, let us now define the mass $\rho$, momentum $u$ and temperature $\ta$ associated with $F(t,v)$ as follows
\begin{align*}
   \rho=\int_{\R^3}F(t,v)\,dv,\\
   \rho u=\int_{\R^3} vF(t,v)\,dv,\\
   \rho\ta =\int_{\R^3} \frac{1}{3} |v-u|^2F(t,v)\,dv.
\end{align*}
From \eqref{F-eq}, it then follows
\begin{align}
\frac{d \rho}{dt}&=0,\label{con0}\\
\frac{d(\rho u)}{dt}+\al A\rho u&=0, \label{con1}\\
\frac{d \ta }{dt}+\frac{2\al}{3\rho}\int_{\R^3}(v-u)\cdot(A(v-u))F(t,v)\,dv&=0.\label{con2}
\end{align}
Without loss of generality we assume
\begin{equation}
\label{idF0mm}
\int_{\R^3} F_0(v)\,dv=1, \quad \int_{\R^3} vF_0(v)\,dv=0,
\end{equation}
so that \eqref{con0} and \eqref{con1} give that $\rho(t)\equiv 1$ and $u(t)\equiv 0$ for all $t\geq 0$. To further get the conservative temperature, we introduce a scaled variable $\xi=\frac{v}{\beta}$ with $\beta=\beta(t)=\sqrt{\ta(t)}$ and set
\begin{align}
F(t,v)=\beta^{-3}G(t,\xi),\quad \xi=\frac{v}{\beta},\label{F-G}
\end{align}
where $\beta(t)$ is the so-called thermal speed measuring the temperature of particles. For simplicity, we assume $\beta(0)=1$ or equivalently
\begin{equation}
\label{idF0e}
\int_{\R^3}|v|^2F_0(v)\,dv=3,
\end{equation}
so that $G_0\equiv F_0$ at initial time.
As a consequence, it holds
\begin{align}
\int_{\R^3}G(t,\xi)\,d\xi=1,\quad \int_{\R^3}\xi G(t,\xi)\,d\xi=0,\quad \int_{\R^3}|\xi|^2G(t,\xi)\,d\xi=3,\label{G-con}
\end{align}
for all $t\geq 0$.
Substituting \eqref{F-G} into \eqref{F-eq}, we obtain
\begin{align}\label{G-eq}
\pa_t G-\frac{\beta'}{\beta}\na_\xi\cdot(\xi G)-\al\na_\xi\cdot (A\xi G)=\beta^\ga Q(G,G),
\end{align}
with the initial data
\begin{align}\label{G-id}
G(0,\xi)=G_0(\xi)=F_0(\xi),
\end{align}
where we have denoted $\beta'=\frac{d\beta}{dt}$.

Note that under the above setting the time-dependent thermal speed $\beta$ is given by the solution $F(t,v)$ itself in the way that
\begin{equation}
\beta(t)=\sqrt{\int_{\R^3} \frac{1}{3} |v|^2F(t,v)\,dv}.\notag
\end{equation}
To determine $\beta(t)$ in terms of $G(t,\xi)$, it follows from the temperature equation \eqref{con2} and the scaling \eqref{F-G} with $\beta(t)=\sqrt{\theta(t)}$ that
\begin{align}
\frac{\beta'}{\beta}=-\frac{\al}{3}\int_{\R^3}\xi\cdot A\xi G(t,\xi)\,d\xi,\quad \beta(0)=1.
\label{bet-G}
\end{align}

Therefore we conclude that under conditions \eqref{idF0mm} and \eqref{idF0e} for the same initial data $F_0=G_0$, to solve the Cauchy problem \eqref{F-eq} and \eqref{F-id} is equivalent to solve the Cauchy problem \eqref{G-eq} and \eqref{G-id} coupled with the first oder ODE problem \eqref{bet-G}.  Note that for the latter we have all the physical conservation laws as in \eqref{G-con}.

\subsection{Expansion}
We solve the reformulated problem \eqref{G-eq}, \eqref{G-id} and \eqref{bet-G} in the perturbation framework for any small enough deformation strength $\alpha>0$. Note that for $\alpha=0$ meaning that there is no deformation force, the solution to the problem exists globally in time and tends asymptotically toward the global Maxwellian determined by the conservation laws. Hence, in terms of initial conditions \eqref{idF0mm} and \eqref{idF0e}, we set the reference global  Maxwellian to be
$$
\mu(\xi)=(2\pi)^{-3/2}e^{-|\xi|^2/2}.
$$
For any $\alpha>0$, we therefore define
\begin{align}\label{G-exp}
G=\mu+\sqrt{\mu}\{\al G_1+\al^2 G_2+\al^m G_R\},
\end{align}
where $G_1, G_2,G_R$ and the integer $m>2$ are to be determined later. In order for \eqref{G-con} to be satisfied, we require
\begin{align}\label{G1-G2-mi}
(f,[1,\xi,\frac{1}{2}|\xi|^2]\mu^{\frac{1}{2}})=0\quad \text{for }f=G_1,G_2,G_R.
\end{align}
Here and in the sequel, $(\cdot,\cdot)$ is used to denote the inner product of two functions on $L^2(\R^3_\xi)$ for brevity. Further plugging \eqref{G-exp} into \eqref{bet-G}  gives
\begin{align}
\frac{\beta'}{\beta}=\beta_0\al^2+\beta_1\al^3,\label{be-exp}
\end{align}
with
\begin{align}
\beta_0=\beta_0(G)=-\frac{1}{3}\int_{\R^3}\xi\cdot A\xi \sqrt{\mu}G_1\,d\xi,\label{be0-def}
\end{align}
and
\begin{align}\label{be1-def}
\beta_1=\beta_1(G)=-\frac{1}{3}\int_{\R^3}\xi\cdot A\xi \sqrt{\mu}G_2\,d\xi-\frac{\al^{m-2}}{3}\int_{\R^3}\xi\cdot A\xi \sqrt{\mu}G_R\,d\xi.
\end{align}
It should be noted that $\beta$, $\beta_0$ and $\beta_1$ all are functions of time $t$ and depend on the solution itself. For simplicity, we will omit such dependence unless the explicit expressions are important for discussions at some places.


Plugging \eqref{G-exp} into \eqref{G-eq} and comparing the coefficients of terms with different powers of $\al$, one has the equations for $G_1$ and $G_2$
\begin{align}\label{G1-eq}
-\na_\xi\cdot(A\xi\mu)\mu^{-\frac{1}{2}}+\beta^\ga L G_1=0,
\end{align}
\begin{align}\label{G2-eq}
-\beta_0\na_\xi\cdot(\xi\mu)\mu^{-\frac{1}{2}}
-\na_\xi\cdot(A\xi\sqrt{\mu}G_1)\mu^{-\frac{1}{2}}+\beta^\ga L G_2=\beta^\ga \Ga(G_1,G_1),
\end{align}
and the equation for the remainder $G_R$
\begin{align}\label{GR-eq}
&\pa_t G_R-\frac{\beta'}{\beta}\na_\xi\cdot(\xi\sqrt{\mu}G_R)\mu^{-\frac{1}{2}}
-\na_\xi\cdot(A\xi\sqrt{\mu}G_R)\mu^{-\frac{1}{2}}+\beta^\ga L G_R\notag\\
&=-\al^{1-m}\pa_tG_1-\al^{2-m}\pa_t G_2+\al^{3-m}\beta_1 \na_\xi\cdot(\xi\mu)\mu^{-\frac{1}{2}}
+\al^{1-m}\frac{\beta'}{\beta}\na_\xi\cdot(\xi\sqrt{\mu}G_1)\mu^{-\frac{1}{2}}
\notag\\&+\al^{2-m}\frac{\beta'}{\beta}\na_\xi\cdot(\xi\sqrt{\mu}G_2)\mu^{-\frac{1}{2}}
+\al^{3-m}\na_\xi\cdot(A\xi\sqrt{\mu}G_2)\mu^{-\frac{1}{2}}
+\al^{3-m}\beta^\ga \{\Ga(G_1,G_2)+\Ga(G_2,G_1)\}\notag\\&+\al^{4-m}\beta^\ga\Ga(G_2,G_2)
+\al\beta^\ga\{\Ga(G_1+\al G_2,G_R)+\Ga(G_R,G_1+\al G_2)\}+\al^m\beta^\ga\Ga(G_R,G_R),
\end{align}
with
\begin{align}
\sqrt{\mu}G_R(0,\xi)=G_{R,0}=\al^{-m}\{G_0-\mu-\al\sqrt{\mu} G_1(0,\xi)-\al^2\sqrt{\mu} G_2(0,\xi)\}.\label{GR-id}
\end{align}
Here, $L$ is the linearized Boltzmann operator and $\Ga$ is the nonlinear collision Boltzmann operator,  respectively given by
\begin{align}
Lg=-\mu^{-1/2}\left\{Q(\mu,\sqrt{\mu}g)+Q(\sqrt{\mu}g,\mu)\right\},\notag
\end{align}
and
\begin{align}
\Gamma (f,g)=\mu^{-1/2} Q(\sqrt{\mu}f,\sqrt{\mu}g).\notag
\end{align}

For later use we first introduce some notations. Note that
$$
Lf=\nu f-Kf
$$
with
\begin{align}\label{sp.L}
\left\{\begin{array}{rll}
&\nu=\dis{\int_{\R^3}\int_{\S^2}}B(\om,\xi-\xi_\ast)\mu(v_\ast)\, d\om d\xi_\ast\sim (1+|\xi|)^{\ga},\\[2mm]
&Kf=\mu^{-\frac{1}{2}}\left\{Q(\mu^{\frac{1}{2}}f,\mu)+Q_{\textrm{gain}}(\mu,\mu^{\frac{1}{2}}f)\right\},
\end{array}\right.
\end{align}
where $Q_{\textrm{gain}}$ denotes the positive part of $Q$ in \eqref{Q-op}. Moreover, it holds that
\begin{align}
Kf=\int_{\R^3}\Fk(\xi,\xi_\ast)f(\xi_\ast)\,d\xi_\ast=\int_{\R^3}(\Fk_2-\Fk_1)(\xi,\xi_\ast)f(\xi_\ast)\,d\xi_\ast,\notag
\end{align}
with
\begin{equation}
0\leq \Fk_1(\xi,\xi_\ast)\leq \tilde{c}_1|\xi-\xi_\ast|^\ga e^{-\frac{1}{4}(|\xi|^2+|\xi_\ast|^2)},\ 0\leq \Fk_2(\xi,\xi_\ast)\leq \tilde{c}_2|\xi-\xi_\ast|^{-2+\ga}e^{-
\frac{1}{8}|\xi-\xi_\ast|^{2}-\frac{1}{8}\frac{\left||\xi|^{2}-|\xi_\ast|^{2}\right|^{2}}{|\xi-\xi_\ast|^{2}}},\notag
\end{equation}
where both $\tilde{c}_1$ and $\tilde{c}_2$ are positive constants. The kernel of $L$, denoted as $\ker L$, is a five-dimensional subspace of $L^2(\R^3_\xi)$, spanned by
$$
\{1,\xi,|\xi|^2-3\}\sqrt{\mu}:= \{\phi_i\}_{i=1}^5.
$$
We further define the projection from $L^2$ to $\ker L$ by
\begin{align*}
\FP_0 g=\left\{a_g+\Fb_g\cdot \xi+(|\xi|^2-3)c_g\right\}\sqrt{\mu}
\end{align*}
for $g\in L^2$, and correspondingly denote the operator $\FP_1$ by $\FP_1g=g-\FP_0 g$, which is orthogonal to $\FP_0$ in $L^2$.
Traditionally, $\FP_0 g$ is also called the macroscopic component, while $\FP_1 g$ stands for the microscopic component. As in \cite{DL-arma-2021}, to treat the polynomial tail part of $G_R$, it is also convenient to define
\begin{align}\notag
\CL f=-\left\{Q(f,\mu)+Q(\mu,f)\right\}=\nu f-\CK f,
\end{align}
with
\begin{equation}\label{sp.cL}
\CK f=Q(f,\mu)+Q_{\textrm{gain}}(\mu,f)=\sqrt{\mu}K(\frac{f}{\sqrt{\mu}}).
\end{equation}

We now determine $G_1$ and $G_2$. Notice that one has $\xi\cdot A\xi\mu^{\frac{1}{2}}\in (\ker L)^\perp$ due to ${\rm tr}A=0$. We then get from \eqref{G1-eq} and \eqref{G1-G2-mi} that
\begin{align}
G_1=-\beta^{-\ga}L^{-1}\{\xi\cdot A\xi\mu^{\frac{1}{2}}\}.\label{G1-exp}
\end{align}
As a consequence, \eqref{be0-def} gives
\begin{align}\label{bet0-eq}
\beta_0=-\frac{1}{3}( \xi\cdot A\xi\mu^{\frac{1}{2}},G_1)=\frac{1}{3}\beta^{-\ga} \left( \xi\cdot A\xi\mu^{\frac{1}{2}},L^{-1}\{\xi\cdot A\xi\mu^{\frac{1}{2}}\}\right):= \beta^{-\ga}\varrho_0,
\end{align}
with
\begin{equation}
\label{def.rho0}
\varrho_0=\frac{1}{3}\left( \xi\cdot A\xi\mu^{\frac{1}{2}},L^{-1}\{\xi\cdot A\xi\mu^{\frac{1}{2}}\}\right).
\end{equation}
Note that $\varrho_0>0$ is a constant independent of $\al$ and $t$. Moreover, \eqref{bet0-eq} also implies
\begin{align}
\left( -\beta_0\na_\xi\cdot(\xi\mu)\mu^{-\frac{1}{2}}
-\na_\xi\cdot(A\xi\sqrt{\mu}G_1)\mu^{-\frac{1}{2}},[1,\xi,\frac{1}{2}|\xi|^2]\mu^{\frac{1}{2}}\right)=0,\notag
\end{align}
and note that $G_2$ can be only microscopic as required in the condition \eqref{G1-G2-mi}. Then, it is valid to derive from \eqref{G2-eq} that
\begin{align}
G_2=&\beta^{-\ga}L^{-1}\left\{\beta^\ga\Ga(G_1,G_1)+\beta_0\na_\xi\cdot(\xi\mu)\mu^{-\frac{1}{2}}
+\na_\xi\cdot(A\xi\sqrt{\mu}G_1)\mu^{-\frac{1}{2}}\right\}.\label{G2-exp}
\end{align}
Using the expression of $G_2$ above, we define
\begin{align}
\vho_1=&-\frac{\beta^{2\ga}}{3}\int_{\R^3}\xi\cdot A\xi \sqrt{\mu}G_2d\xi\notag\\
=&-\frac{1}{3}\int_{\R^3}\xi\cdot A\xi \sqrt{\mu}\bigg\{L^{-1}\Big\{\Ga(L^{-1}\{\xi\cdot A\xi\mu^{\frac{1}{2}}\},L^{-1}\{\xi\cdot A\xi\mu^{\frac{1}{2}}\})+\vho_0\na_\xi\cdot(\xi\mu)\mu^{-\frac{1}{2}}
\notag\\&\qquad\qquad-\na_\xi\cdot(A\xi\sqrt{\mu}L^{-1}\{\xi\cdot A\xi\mu^{\frac{1}{2}}\})\mu^{-\frac{1}{2}}\Big\}\bigg\}d\xi. \label{vh01}
\end{align}
Note again that $\vho_1$ is a constant independent of $\al$ and $t$.
Furthermore, by \eqref{G2-exp} and \eqref{G1-exp}  as well as the definition \eqref{be1-def}, one gets from \eqref{GR-eq} that
\begin{align}\label{GR-con}
\left( G_R,[1,\xi,\frac{1}{2}|\xi|^2]\mu^{\frac{1}{2}}\right)=\left( G_{R,0},[1,\xi,\frac{1}{2}|\xi|^2]\right),
\end{align}
which coincides with our assumption \eqref{G-con} when further assuming
\begin{align}\label{GR-con-id}
\left( G_{R,0},[1,\xi,\frac{1}{2}|\xi|^2]\right)=0.
\end{align}

Let us now briefly illustrate how to solve $\beta$ from \eqref{be-exp}.  By \eqref{bet0-eq}, \eqref{vh01} and \eqref{be-exp}, we rewrite
\begin{align}
\frac{\beta'}{\beta}=\vho_0\al^2\beta^{-\ga}+\vho_1\al^3\beta^{-2\ga}-\frac{\al^{m+1}}{3}\int_{\R^3}\xi\cdot A\xi \sqrt{\mu}G_R\,d\xi.
\label{eq.betaR}
\end{align}
It turns out that both constants $\varrho_0>0$ and $\varrho_1$ can be proved to be finite. Furthermore $\beta^{2\ga}\sqrt{\mu}G_R$ can be verified to be bounded in terms of $G_1$ and $G_2$. Therefore, if $\al>0$ is suitably small and one takes $2<m<3$, we formally have
\begin{align}
\frac{\beta'}{\beta}\sim \varrho_0\beta^{-\ga}\al^2,\notag
\end{align}
which may give
\begin{align}
\beta\sim(1+\ga \varrho_0\al^2 t)^{\frac{1}{\ga}}.\notag
\end{align}

\subsection{Main result}

Define a polynomial velocity weighted function $w^\ell(\xi)=(1+|\xi|^2)^{\ell}$ for  $\ell\geq0$.
We now state the main result below for the Cauchy problem \eqref{F-eq} and \eqref{F-id}.


\begin{theorem}\label{mth}
Assume \eqref{hard-sp} and \eqref{hd-po}. Let $0<\ga\leq 1$, $2<m<3$ and an integer $N\geq 1$. Let $\ell_\infty\gg 4$ be a constant that can be arbitrarily large. Suppose ${\rm tr}A=0$. There are constants $\al_0>0$, $M_0>0$ and $C>0$ such that for any $\al\in(0,\al_0)$, if initial data $F_0=F_0(v)\geq 0$ satisfies
\begin{align}
\int_{\R^3} F_0(v)\,dv=1, \quad \int_{\R^3} vF_0(v)\,dv=0,\quad\int_{\R^3}|v|^2F_0(v)\,dv=3,
 \notag
\end{align}
and
\begin{align}\notag
\sum\limits_{|\vth|\leq N}\|w^{\ell_\infty}\pa_v^\vth [F_0-\mu-\al \sqrt{\mu}G_1(0,v)-\al^2 \sqrt{\mu}G_2(0,v)]\|_{L^\infty}\leq M_0\al^{m},
\end{align}
then the spatially homogeneous Cauchy problem \eqref{F-eq} and \eqref{F-id} admits a unique global  solution $F=F(t,v)\geq0$ satisfying
\begin{align}
\int_{\R^3} F(t,v)\,dv=1, \quad \int_{\R^3} vF(t, v)\,dv=0,\quad \int_{\R^3}|v|^2F(t,v)\,dv:=3\beta^2(t),\quad\forall\,t\geq 0,
 \notag
\end{align}
with estimates
\begin{equation}
\beta(t)>0,\quad \beta(0)=1, \quad \lim\limits_{t\to\infty}\frac{\beta(t)}{(1+\ga \varrho_0\al^2 t)^{\frac{1}{\ga}}}=1,
\label{sol.beta}
\end{equation}
and
\begin{multline}\label{sol-decay}
\sum\limits_{|\vth|\leq N}\left\|w^{\ell_\infty}\pa_v^\vth \left[\beta^3(t)F(t,\beta(t)v)-\mu(v)-\al \sqrt{\mu}G_1(t,v)-\al^2 \sqrt{\mu}G_2(t,v)\right]\right\|_{L^\infty}\\
\leq C(1+\ga \vho_0\al^2 t)^{-2}
(M_0\al^m+\al^3), \quad \forall\,t\geq 0.
\end{multline}
Here $\vho_0>0$, $G_1(t,\cdot)$ and $G_2(t,\cdot)$ are respectively given by \eqref{def.rho0}, \eqref{G1-exp} and \eqref{G2-exp}.
\end{theorem}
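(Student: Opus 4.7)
The plan is to solve the coupled system of the ODE \eqref{eq.betaR} for the thermal speed $\beta(t)$ and the PDE \eqref{GR-eq} for the remainder $G_R$, treating the explicit profiles $G_1$, $G_2$ from \eqref{G1-exp} and \eqref{G2-exp} as known data. Global existence, the asymptotics \eqref{sol.beta} and the decay \eqref{sol-decay} follow simultaneously from a bootstrap on the weighted $L^\infty$ norm of $G_R$, and the main statement on $F$ is then the unscaling of $G$ via \eqref{F-G}.

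I would start by controlling $G_1, G_2$. Since $\xi\cdot A\xi\sqrt{\mu}\in(\ker L)^\perp$ thanks to $\mathrm{tr}\,A=0$, the Fredholm theory for $L$ on hard potentials guarantees that $L^{-1}$ preserves both smoothness in $\xi$ and arbitrary polynomial decay. Applied to \eqref{G1-exp}--\eqref{G2-exp}, with the prefactors $\beta^{-\gamma}$ and $\beta^{-2\gamma}$ tracked explicitly, this yields bounds $\|w^\ell\partial_v^\vartheta G_i(t,\cdot)\|_{L^\infty}\lesssim \beta^{-i\gamma}(t)$ for $i=1,2$, any $\ell\geq 0$ and $|\vartheta|\leq N$. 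Together with the expected behaviour $\beta(t)\sim(1+\gamma\varrho_0\alpha^2 t)^{1/\gamma}$, this provides the $(1+t)^{-1}$ and $(1+t)^{-2}$ decay of $G_1,G_2$ and of every explicit source term on the right-hand side of \eqref{GR-eq}.

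Because the constraint \eqref{G1-G2-mi} forces $\FP_0 G_R\equiv 0$, the coercivity $(\beta^\gamma L G_R, G_R)\geq c_0\beta^\gamma\|\nu^{1/2}G_R\|^2$ applies without any macroscopic correction. I would first run a weighted $L^2(w^{2\ell})$ energy estimate on \eqref{GR-eq}: the dissipation has rate $\beta^\gamma\sim(1+t)$; the scaling term $(\beta'/\beta)\nabla_\xi\cdot(\xi\sqrt{\mu}G_R)\mu^{-1/2}$ has coefficient $\alpha^2\beta^{-\gamma}\sim(1+t)^{-1}$ and is easily dominated; and the force term $-\nabla_\xi\cdot(A\xi\sqrt{\mu}G_R)\mu^{-1/2}$ is absorbed by the smallness of $\alpha\|A\|$. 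To upgrade to $L^\infty$ I would use the Vidav--Caflisch--Guo double-iteration scheme on the splitting $L=\nu-K$: \eqref{GR-eq} is written in mild form along the characteristics $\dot\xi=-(\beta'/\beta)\xi-\alpha A\xi$ with damping factor $\exp\bigl(-\int_s^t\beta^\gamma\nu\,d\tau\bigr)$, and two successive applications of Duhamel's formula turn the compact kernel $K$ into a Hilbert--Schmidt-type operator that can be bounded by the $L^2$ quantity from the energy step. Derivatives up to order $N$ are handled by commuting $\partial_v^\vartheta$ through the equation and closing the commutators inductively on $|\vartheta|$, the polynomial weight $w^{\ell_\infty}$ soaking up the extra $\xi$-powers coming from the transport coefficients.

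The hardest part will be closing the precise $(1+\gamma\varrho_0\alpha^2 t)^{-2}$ decay of $G_R$ in the weighted $L^\infty$ norm: the dominant source terms already decay at exactly this rate, leaving essentially no slack, so one must exploit the full strength of the growing dissipation $\beta^\gamma$ against the transport drift along the characteristics while controlling the weight $w^{\ell_\infty}$ in the regime $|\xi|\to\infty$, where $\nu(\xi)\sim|\xi|^\gamma$ is unbounded but the commutators $[\partial_v^\vartheta,\nabla_\xi\cdot(A\xi\,\cdot\,)]$ are not small. Once the bound $\sum_{|\vartheta|\leq N}\|w^{\ell_\infty}\partial_v^\vartheta G_R\|_{L^\infty}\lesssim(M_0\alpha^m+\alpha^3)\alpha^{-m}\beta^{-2}$ is closed, it feeds back into \eqref{eq.betaR}: the remainder contribution there is $O(\alpha^{m+1}(1+t)^{-2})$, which is time-integrable, and a Gr\"onwall comparison with the leading-order ODE $\beta'/\beta=\varrho_0\alpha^2\beta^{-\gamma}$ yields \eqref{sol.beta}. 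Unscaling through \eqref{F-G} then gives \eqref{sol-decay}, and non-negativity of $F$ is inherited from the smallness of $F-\mu$ in $L^\infty$ by a standard positivity-preservation argument.
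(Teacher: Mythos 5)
Your plan runs the whole argument on the conjugated remainder $G_R$ itself, in polynomially weighted spaces: a weighted $L^2(w^{2\ell})$ energy estimate on \eqref{GR-eq} followed by a Vidav--Caflisch--Guo $L^\infty$ iteration, with the claim that the deformation term $-\na_\xi\cdot(A\xi\sqrt{\mu}G_R)\mu^{-\frac12}$ ``is absorbed by the smallness of $\alpha\|A\|$.'' This is where the proof breaks. Conjugating the drift terms with $\sqrt{\mu}$ produces, besides the transport parts, the zeroth-order terms $\tfrac{\al}{2}(\xi\cdot A\xi)G_R$ and $\tfrac{\beta'}{2\beta}|\xi|^2 G_R$, which grow quadratically in $|\xi|$, while for hard potentials the dissipation is only $\beta^\ga\nu(\xi)\sim\beta^\ga(1+|\xi|)^\ga$ with $\ga\le 1$. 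For $|\xi|$ large the quadratic terms dominate no matter how small $\al$ is (and $\xi\cdot A\xi$ has no sign, since ${\rm tr}A=0$), so neither the energy estimate nor the exponential damping $\exp(-\int\beta^\ga\nu)$ along characteristics controls them; smallness of $\al$ buys nothing here. There is also a mismatch with the hypotheses: the initial condition only bounds $w^{\ell_\infty}\pa_v^\vth[F_0-\mu-\al\sqrt{\mu}G_1-\al^2\sqrt{\mu}G_2]$, i.e.\ $\sqrt{\mu}G_R(0)$ in a polynomially weighted $L^\infty$ space, so $G_R(0)$ itself (after dividing by $\sqrt{\mu}$) can grow like $e^{|\xi|^2/4}$ and need not lie in any of the spaces $\|w^{\ell}\cdot\|$ or $\|w^{\ell_\infty}\cdot\|_{L^\infty}$ your scheme is built on; the conclusion \eqref{sol-decay} is likewise a statement about $\sqrt{\mu}G_R$, not $G_R$.

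These two problems are exactly what the paper's Caflisch decomposition $\sqrt{\mu}G_R=G_{R,1}+\sqrt{\mu}G_{R,2}$ (see \eqref{GR1-eq}--\eqref{GR2-eq}) is designed to remove: in both component equations the drifts appear in the conservative form $\na_\xi\cdot(\xi\,\cdot)$, $\na_\xi\cdot(A\xi\,\cdot)$, hence as pure transport along the characteristics \eqref{CL}, and the quadratically growing factors only hit $\sqrt{\mu}G_{R,2}$, where the explicit Gaussian neutralizes them; the polynomial-tail part of the data is carried by $G_{R,1}$. Any correct proof needs this (or an equivalent) device, plus two further ingredients your sketch has no counterpart for: the coupling term $\beta^\ga(1-\chi_M)\mu^{-\frac12}\CK G_{R,1}$ carries the growing factor $\beta^\ga$, which for $\ga>0$ forces an $L^2$ smallness estimate for $\chi_M\CK$ in polynomially weighted spaces (the Riesz--Thorin argument of Proposition \ref{CK-l2-pro}) rather than only the $L^\infty$ smallness at large velocities; and since $G_1$, $G_2$ and the characteristics all depend on $\beta$, which in turn depends on $G_R$ through \eqref{eq.betaR}, the local construction requires stability estimates of $\beta$, $V$ and the damping exponent with respect to the solution, which you pass over. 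The remaining elements of your outline (bounds on $G_1,G_2$ via $L^{-1}$, microscopicity of the remainder, feeding the remainder bound back into the $\beta$-ODE, unscaling via \eqref{F-G}) do match the paper, but the missing decomposition is essential, not technical.
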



Theorem \ref{mth} above shows the following uniform asymptotic expansion of the obtained global solution up to $\alpha^2$ in the homoenergetic self-similar scaling
\begin{equation}
\label{expcon}
\theta^{3/2}(t)F(t,\theta^{1/2}(t)v)=\mu+\alpha \sqrt{\mu} G_1(t,v)+\alpha^2 \sqrt{\mu}G_2(t,v)+O(1)\alpha^m (1+\ga \varrho_0\al^2 t)^{-2},
\end{equation}
where the thermal speed $\theta(t)=\beta^2(t)$ satisfies
\begin{equation}
\label{tincr}
\theta(t)\sim(1+\ga \varrho_0\al^2 t)^{\frac{2}{\ga}}.
\end{equation}
By \eqref{G1-exp} and \eqref{G2-exp} one has 
$$
G_1(t,v)\sim (1+\ga \varrho_0\al^2 t)^{-1}\text{ and } G_2(t,v)\sim (1+\ga \varrho_0\al^2 t)^{-2}
$$ 
in large time. We remark that it is also interesting to carry out the higher order expansion of $G$, for instance, up to the $n$th-order for an integer $n>0$, namely,
\begin{align}
G=\mu+\sqrt{\mu}\{\al G_1+\al^2 G_2+\cdots+\al^n G_n+\al^m G_R\}, \ m>n.\notag
\end{align}
Then the corresponding decay rate in \eqref{sol-decay} could  be improved to be $(1+\ga \vho_0\al^2 t)^{-n}$. Furthermore, if one can obtain uniform estimates for any $n>\frac{1}{\ga}$ as $\ga\rightarrow 0+$, then the decay rate of the remainder $G_R$ as $\ga\rightarrow 0+$ should be recovered by taking the limit
$$
\lim\limits_{\ga\rightarrow 0+}(1+\ga \vho_0\al^2 t)^{-\frac{1}{\ga}}=e^{-\vho_{00}\al^2t},
$$
which is exponential in time with size of $\al^2$ order, where $\vho_{00}>0$ is the limit of $\vho_0$ as $\ga\rightarrow 0+$ in terms of \eqref{def.rho0}. This will coincide with the result proved in our previous work \cite{DL-arma-2021} for the case of $\ga=0$. The rigorous study of such issue is left for the future.


\subsection{Literature}
In what follows we mention some related literature. Readers may refer to our previous work \cite{DL-arma-2021, DL-2022, DLY-2021} for a detailed review. For instance, Galkin \cite{G1} and Truesdell \cite{T}  first independently introduced the concept of homoenergetic solutions to the Boltzmann equation; see also an introduction to the topic in Truesdell-Muncaster \cite{TM}. The numerical investigation has been extensively made in the monograph Garz\'o-Santos \cite{GaSa}. Related to the USF state without any boundary, a physically more realistic topic is the planar Couette flow governed by the Boltzmann equation for a rarefied gas between two parallel infinite plates moving with opposite velocities and such topic was also discussed in many books on kinetic theory such as Kogan \cite[Chapter 4]{Ko} and Sone \cite[Chapter 4]{Sone07}; see also \cite[Chapter 5]{GaSa}.

When there is no deformation force, the topic on the self-similar solution and its asymptotic stability for the spatially homogeneous Boltzmann equation was investigated in early 2000s by Bobylev-Cercignani  \cite{BC02b, BC02a, BC03} and later by Cannone-Karch \cite{CK10, CK13} and Morimoto-Yang-Zhao \cite{MYZ} among many others.

When there is a deformation force, we mention many early results on the shear flow topic by Cercignani \cite{Cer89, Cer00, Cer02} and Bobylev-Caraffini-Spiga \cite{BCS}. Recently, the significant progress was made by  James-Nota-Vel\'azquez \cite{JNV-ARMA, JNV-JNS, JNV19} and later by Bobylev-Nota-Vel\'azquez \cite{BNV-2019}. In particular, for the USF governed by the Boltzmann equation in case of the Maxwell molecule model $\gamma=0$ under the cutoff assumption, \cite{BNV-2019} proved the existence (obtained also in \cite{JNV-ARMA}) and the uniqueness, non-negativity and stability (as well as the analysis of the moments and the exponential rate of convergence) of self-similar profiles in the class of measures for small enough deformation strength; see also Bobylev \cite{Bo21} for a further study to provide explicit estimates on smallness of the deformation matrix. Here, the approach used in \cite{JNV-ARMA} is based on the fixed point argument on the integral form of the problem over a set of non-negative Radon measures, while \cite{BNV-2019} gave a different proof by means of the Fourier transform method (cf.~\cite{Bo75, Bo88}) taking the full advantage of the Bobylev formula. An interesting result on self-similar profiles for the non-cutoff Maxwell molecule model was also obtained by Kepka \cite{Ke1}. Readers may refer to \cite{NV} for a thorough review to those and other related works. Moreover, following \cite{JNV-ARMA} and \cite{BNV-2019}, in the case of Maxwell molecule with cutoff, we also constructed in  \cite{DL-arma-2021}  smooth self-similar profiles for the shear flow problem on the Boltzmann equation and proved the dynamical stability of the stationary solution via a perturbation approach.

In the current work, we are interested in the uniform shear flow governed by the Boltzmann equation in the case of hard potentials. The problem was addressed in  \cite{JNV-ARMA, JNV-JNS}; see also \cite{NV} as mentioned before. In fact, the formal Hilbert expansion similar to  \eqref{G-exp} as given in \cite{JNV-JNS} implies that the temperature of gas particles increases in time with an algebraic rate in \eqref{tincr} and the self-similar asymptotics of the form \eqref{expcon} was conjectured. In particular, since $G_1$ and $G_2$ decay in time, the solution converges self-similarly in large time to the global Maxwellian in contrast to a non-equilibrium state with a polynomial large velocity tail obtained in case of the Maxwell molecule model (cf.~\cite{MT}). To treat the issue we recently considered a closely related problem in \cite{DL-2022} where an extra thermostated term is added to compensate the viscous heating energy such that the system of gas particles can be driven in large time to the non-equilibrium steady state under the interplay of both thermostated and sheared forces. Using the developed techniques in \cite{DL-arma-2021,DL-2022}, we aim in this paper at making a rigorous justification of the expansion \eqref{expcon} with the temperature behavior \eqref{tincr}. We remark that during the preparation of the current work, we have been aware of a preprint \cite{Ke2} for treating a similar issue which includes both cutoff and non-cutoff cases. The approach used in \cite{Ke2} is based on the construction of polynomial tail solutions with the help of the robust semigroup property, cf.~\cite{GMM}.

In the end, we remark that there have been extensive studies for stability of shear flow in the context of fluid dynamic equations (cf.~\cite{ScHe}). In particular, we mention major contributions \cite{BM-15,BMV-16,BGM-17} recently made by Bedrossian together with his collaborators; see the survey \cite{BGM-BAMS} for the subject. Regarding the shear flow with physical boundaries, we refer to recent progress by Ionescu-Jia \cite{IJ} and Masmoudi-Zhao \cite{MZ} which inspired us to study in \cite{DLY-2021} the kinetic planar Couette flow with boundaries as mentioned before. We point that it would be interesting to understand the relation of those fluid solutions and Boltzmann solutions through the rigorous justification of the hydrodynamic limit in case of small shear strength, cf.~\cite{ELM-94, ELM-95}.

\subsection{Strategies and ideas of the proof}
We now outline some key ideas and methods used in the paper. One of typical features for the shear flow governed by the Boltzmann equation is the rapid increasing of the total energy of gas particles.
A serious consequence of such a scenario is that the macroscopic component is out of control in $L^2$ setting.
Therefore the self-similar structure of solutions need to be explored to look for the normal form. To do so, a suitable scaling should be introduced so that the energy of the self-similar profile can be conserved. In this paper, the scaling  parameter $\beta$ of self-similar solutions is chosen to satisfy the ODE
\begin{align}
\frac{\beta'}{\beta}=-\frac{\al}{3}\int_{\R^3}\xi\cdot A\xi G\,d\xi. \notag
\end{align}
In particular, $\beta=\beta(t)$ is an unknown function of time which depends on the solution itself. This leads to a nonlinear convection term $\frac{\beta'}{\beta}\na_\xi\cdot(\xi G)$ in the scaled equation. However, if the drift terms in the equation are of higher order when $t\rightarrow\infty$ compared with the Boltzmann collision operator, the solution should converge to Maxwellian equilibrium. Hence the following Hilbert type expansion is introduced
\begin{align}
G=\mu+\sqrt{\mu}\{\al G_1+\al^2 G_2+\al^m G_R\},\notag
\end{align}
with $G_1,G_2$ and $G_R$ all belonging to $(\ker L)^\bot $, provided that the shear strength $\al>0$ is small enough. Unfortunately, unlike the case of Maxwell molecular, not only the remainder $G_R$ but also both the correction terms $G_1$ and $G_2$ depend on $\beta$. To overcome this difficulty, a continua argument is employed. More precisely, we first construct the local existence by the contraction mapping method, and then prove the {\it a priori} estimates in $L^\infty$ setting. To show the local existence, there are two difficulties:  one is to determine the time-dependent scaling function $\beta$ and the other is to justify that the solution operator is contractive in a short time.
To overcome the first difficulty, an expansion in the form of
\begin{align}
\frac{\beta'}{\beta}=\beta_0\al^2+\beta_1\al^3,\notag
\end{align}
is considered,
and then the problem is reduced to solve the ordinary differential inequalities
\begin{align}
\frac{1}{2}\vho_0\beta^{-\ga}\al^2\leq \frac{\beta'}{\beta}\leq \frac{3}{2}\vho_0\beta^{-\ga}\al^2.\notag
\end{align}
To treat the second difficulty, the stability of $\beta$ with respect to the solution variable $h$ is proved, namely we verify
\begin{align}
|(\beta^\ga(h)-\beta^\ga(\bar{h}))(t)\leq C(T_0)\al \|w^\ell[h_1-\bar{h}_1,h_2-\bar{h}_2](t)\|_{L^\infty}.\notag
\end{align}
Due to this, $V(h)$, the velocity determined by the  characteristic line as in \eqref{CL}, and $\CA(h)$, the generator of the semigroup as given by \eqref{A-def}, both can be shown to be stable.

Another typical feature for the shear flow problem on the Boltzmann equation is the velocity growth caused by the shear force (or deformation force).
To overcome this difficulty, we introduce the following Caflisch's decomposition
\begin{align}
\pa_t G_{R,1}&-\frac{\beta'}{\beta}\na_\xi\cdot(\xi G_{R,1})
-\al\na_\xi\cdot(A\xi G_{R,1})+\beta^\ga \nu G_{R,1}\notag\\
=&\beta^\ga\chi_M \CK G_{R,1}-\frac{\beta'}{2\beta} |\xi|^2\sqrt{\mu}G_{R,2}-\frac{\al}{2} \xi\cdot(A\xi)\sqrt{\mu}G_{R,2}
+\cdots,\notag
\end{align}
and
\begin{align}
\pa_t G_{R,2}&-\frac{\beta'}{\beta}\na_\xi\cdot(\xi G_{R,2})
-\al\na_\xi\cdot(A\xi G_{R,2})+\beta^\ga L G_{R,2}
=\beta^\ga(1-\chi_M)\mu^{-\frac{1}{2}} \CK G_{R,1}.\notag
\end{align}
The disadvantage of such a decomposition is that the large velocity behavior of the integration operator $\CK$ is hard to be obtained. We have already settled this problem in $L^\infty$ setting in our previous work \cite{DL-arma-2021} and \cite{DL-2022} in the case of $\ga=0$ and $\ga\in(0,1]$, respectively. Here the new difficulty stems from the time growth $\beta^\ga$ in $\beta^\ga(1-\chi_M)\mu^{-\frac{1}{2}} \CK G_{R,1}$, which is caused by the hard potential kernel under the self-similar scaling \eqref{F-G}.
To overcome this difficulty, an $L^2$ estimate on the first component is developed, in particular, a new $L^2$ estimates on the collision operator $\CK$ is proved with the aid of Riesz-Thorin's interpolation inequality, cf. Lemma \ref{RT-lem}.

\subsection{Notations}
We list some notations and norms used in the paper. Throughout this paper,  $C$ denotes some generic positive (generally large) constant and $\la$ denote some generic positive (generally small) constant. $D\lesssim E$ means that  there is a generic constant $C>0$
such that $D\leq CE$. $D\sim E$
means $D\lesssim E$ and $E\lesssim D$.
For multi-indices
$\vth=[\vth_1, \vth_2, \vth_3]$, we denote
$
\partial^{\vth}_{\xi}=\partial_{\xi_{1}}^{\vth_{1}}
\partial_{\xi_{2}}^{\vth_{2}}\partial_{\xi_{3}}^{\vth_{3}}
$
and likewise for $
\partial^{\vth}_{v}$, and the length of $\vth$ is denoted by $|\vth|=\vth_1+\vth_2+\vth_3$.
$\vth'\leq\vth$ means that no component of $\vth'$
is greater than the component of $\vth$, and $\vth'<\vth$ means that
$\vth'\leq\vth$ and $|\vth'|<|\vth|$. $(\cdot,\cdot)$
denotes the $L^{2}$ inner product in ${\R}^{3}_{\xi}$, with the
$L^{2}$ norm $\|\cdot\|$.

\subsection{Organization of the paper}
The rest of this paper is arranged as follows. Section \ref{sec-k} is devoted to obtaining a crucial $L^2$ estimate for the
integration operator $\CK$ given by \eqref{sp.cL}. The local existence of Cauchy problem \eqref{GR-eq} and \eqref{GR-id} is constructed in Section \ref{sec.le}. The proof of Theorem \ref{mth} is given in Section \ref{sec.rm}. Finally, some basic estimates are collected in Appendix \ref{pre-sec}.

\section{$L^2$ estimate for $\CK$ with large velocity}\label{sec-k}
In this section we consider an $L^2$ estimate for $\CK$ defined by \eqref{sp.cL}. The proof is based on the application of Riesz-Thorin's interpolation inequality. For completeness, we first quote the following lemma, 
cf.~\cite[Theorem 1.3.4, pp.37]{LG-fourier}.

\begin{lemma}\label{RT-lem}
Let $(X_1,\CF_1,\mathfrak{M}_1)$ and $(X_2,\CF_2,\mathfrak{M}_2)$ be $\si$ finite measure spaces. Fix $1\leq p_1,p_2,q_1,q_2\leq\infty$ and
$0 <\la<1$. Define $p_\la$ and $q_\la$ by
$$
p_\la=\frac{\la}{p_1}+\frac{1-\la}{p_2},\ q_\la=\frac{\la}{q_1}+\frac{1-\la}{q_2}.
$$
Let both $T_1$ and $T_2$ be continuous linear mappings such that
$$
T_1: L^{p_1}(\mathfrak{M}_1)\rightarrow L^{q_1}(\mathfrak{M}_2),\ \textrm{with the operator norm}\ \|T_1\| = M_1,
$$
and
$$
T_2: L^{p_2}(\mathfrak{M}_1)\rightarrow L^{q_2}(\mathfrak{M}_2),\ \textrm{with the operator norm}\ \|T_2\| = M_2,$$
and
$T_1f = T_2f$ for any $f\in L^{p_1}(\mathfrak{M}_1)\cap L^{p_2}(\mathfrak{M}_1).$ Here and below,
we shorten $L^r (X,\CF,\mathfrak{M})$
to $L^r (\mathfrak{M})$.

Thus we can define a mapping
$$
T :L^{p_1}(\mathfrak{M}_1)\cap L^{p_2}(\mathfrak{M}_1)\rightarrow L^{q_1}(\mathfrak{M}_2)\cap L^{q_2}(\mathfrak{M}_2)
$$
by
$$Tf = T_1f = T_2f,\ \textrm{for}\ f\in L^{p_1}(\mathfrak{M}_1)\cap L^{p_2}(\mathfrak{M}_1).$$
For each $f\in L^{p_1}(\mathfrak{M}_1)\cap L^{p_2}(\mathfrak{M}_1),$ it holds
\begin{align}\label{RT-it}
\|Tf\|_{L^{q_\la}(\mathfrak{M}_2)}\leq M_1^{\la}M_2^{1-\la}\|f\|_{L^{p_\la}(\mathfrak{M}_1)}.
\end{align}
Furthermore, $T$ has a unique continuous linear extension
$$
T_\la: L^{p_\la}(\mathfrak{M}_1)\rightarrow L^{q_\la}(\mathfrak{M}_2)
$$
with
$$
\|T_\la\|\leq M_1^{\la}M_2^{1-\la}.
$$
\end{lemma}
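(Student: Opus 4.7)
The plan is to use the classical complex-interpolation argument of Thorin, based on Hadamard's three lines theorem; this is how the quoted result is proved in any standard reference. First I would reduce matters to simple functions: simple functions lying in $L^{p_1}(\mathfrak{M}_1)\cap L^{p_2}(\mathfrak{M}_1)$ are dense in $L^{p_\la}(\mathfrak{M}_1)$ when $p_\la$ corresponds to a finite exponent, and \eqref{RT-it} is continuous in $f$, so it suffices to prove \eqref{RT-it} for $f$ simple. By duality for $L^{q_\la}$ with H\"older conjugate exponent $q_\la'$, it further suffices to bound $\left|\int_{X_2} (Tf)\, g\, d\mathfrak{M}_2\right|$ for simple $g$ with $\|g\|_{L^{q_\la'}(\mathfrak{M}_2)}=1$.

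Next I would introduce the analytic interpolator. Writing $f=\sum_j a_j \chi_{E_j}$ and $g=\sum_k b_k \chi_{F_k}$ with the $E_j$ (resp.\ $F_k$) disjoint of finite measure, define on the closed strip $0\leq \rmre z\leq 1$
$$
\alpha(z)=\frac{z}{p_1}+\frac{1-z}{p_2},\quad \beta(z)=\frac{z}{q_1'}+\frac{1-z}{q_2'},
$$
and set
$$
f_z=\sum_j |a_j|^{\alpha(z)/\alpha(\la)}\,\mathrm{sgn}(a_j)\, \chi_{E_j},\quad g_z=\sum_k |b_k|^{\beta(z)/\beta(\la)}\,\mathrm{sgn}(b_k)\, \chi_{F_k}.
$$
Then $F(z)=\int_{X_2}(Tf_z)(y)\, g_z(y)\, d\mathfrak{M}_2(y)$ is an entire function of $z$ (a finite sum of exponentials), continuous and bounded on the closed strip, with $F(\la)$ equal to the target quantity.

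I would then estimate $F$ on the two boundary lines. A routine exponent computation gives $\|f_{1+iy}\|_{L^{p_1}(\mathfrak{M}_1)}=1$ and $\|g_{1+iy}\|_{L^{q_1'}(\mathfrak{M}_2)}=1$; applying the $T_1$ bound together with H\"older's inequality yields $|F(1+iy)|\leq M_1$. Symmetrically, on $\rmre z=0$ the analogous computation using $T_2$ produces $|F(iy)|\leq M_2$. At this step the compatibility hypothesis $T_1 f=T_2 f$ on $L^{p_1}(\mathfrak{M}_1)\cap L^{p_2}(\mathfrak{M}_1)$ is essential, since it guarantees that $T f_z$ is unambiguously defined on the simple functions $f_z$ that live in every $L^p$.

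Finally, the Hadamard three lines theorem applied to the bounded entire $F$ on $0\leq\rmre z\leq 1$ gives $|F(\la)|\leq M_1^\la M_2^{1-\la}$. Taking the supremum over simple $g$ of unit $L^{q_\la'}$ norm yields \eqref{RT-it}, and density of simple functions supplies the unique continuous extension $T_\la$ with the asserted operator norm. The only genuine obstacle in executing this plan is the careful bookkeeping of the exponents in $\alpha(z),\beta(z)$ so that the two boundary estimates produce exactly $M_1$ and $M_2$; the endpoint cases $p_\la=\infty$ or $q_\la\in\{1,\infty\}$ require the small modifications by weak-$\ast$ compactness or duality alluded to above, but they do not affect the instance used later in Section \ref{sec-k}.
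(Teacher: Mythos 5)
Your sketch is correct: it is the classical Thorin complex-interpolation argument (reduction to simple functions, duality, the analytic family $f_z,g_z$, boundary bounds via $T_1,T_2$ and H\"older, then the three lines theorem), which is exactly the proof of the result the paper merely quotes from Grafakos \cite[Theorem 1.3.4]{LG-fourier} without reproving it. The only detail left implicit is the normalization $\|f\|_{L^{p_\la}(\mathfrak{M}_1)}=\|g\|_{L^{q_\la'}(\mathfrak{M}_2)}=1$ needed so that the boundary computations give exactly $M_1$ and $M_2$; with that (and the endpoint modifications you already flag), the argument yields \eqref{RT-it} as stated.
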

With this lemma in hand, we now intend to prove the following $L^2$ estimate which plays an crucial role in the proof of global existence in Section \ref{sec.rm}. The estimate gives the smallness of $\CK$ at large velocities. We also consider the corresponding velocity derivative estimates.

\begin{proposition}\label{CK-l2-pro}
Let $0<\ga\leq1$, then there is a constant $C>0$ such that for suitably large $\ell_2>0$, there are sufficiently large $M=M(\ell_2)>0$ and suitably small $\varsigma=\varsigma(\ell_2)>0$ such that for any $\vth\geq0$ it holds that
\begin{align}\label{CK-l2}
\|\nu^{-1/2} w^{\ell_2}\pa_\xi^\vth(\chi_M\CK f)\|\leq C\{(1+M)^{-\ga}+\varsigma\}^{1/2}(1+M)^{-\ga/2}\sum\limits_{\vth'\leq\vth}\|\nu^{1/2}w^{\ell_2}\pa_\xi^{\vth'} f\|,
\end{align}
where $\chi_{M}(\xi)$ is a non-negative smooth cutoff function such that
\begin{align}
\chi_{M}(\xi)=\left\{\begin{array}{rll}
1,&\ |\xi|\geq M+1,\\[2mm]
0,&\ |\xi|\leq M.
\end{array}\right.\notag
\end{align}

\end{proposition}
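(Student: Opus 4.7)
The plan is to realize $\chi_M\CK$ as an integral operator and bound its weighted $L^2$ norm by Riesz--Thorin interpolation between $L^1\to L^1$ and $L^\infty\to L^\infty$, where Schur's test applies. Begin with $\vth=0$: writing $\CK f(\xi)=\sqrt{\mu(\xi)}\int\Fk(\xi,\xi_\ast)f(\xi_\ast)/\sqrt{\mu(\xi_\ast)}\,d\xi_\ast$ and substituting $g=\nu^{1/2}w^{\ell_2}f$, the inequality reduces to a norm bound on the integral operator $T$ with kernel
\[
H(\xi,\xi_\ast)=\chi_M(\xi)\nu^{-1/2}(\xi)w^{\ell_2}(\xi)\frac{\sqrt{\mu(\xi)}}{\sqrt{\mu(\xi_\ast)}}\Fk(\xi,\xi_\ast)\nu^{-1/2}(\xi_\ast)w^{-\ell_2}(\xi_\ast).
\]
The piece from $\Fk_1$ is exponentially small in $|\xi|$ and $|\xi_\ast|$ and is harmless; for the principal piece from $\Fk_2$, completing the square in the exponent absorbs $\sqrt{\mu(\xi)/\mu(\xi_\ast)}$ into the standard Grad bound
\[
\tfrac{\sqrt{\mu(\xi)}}{\sqrt{\mu(\xi_\ast)}}\Fk_2(\xi,\xi_\ast)\lesssim|\xi-\xi_\ast|^{-2+\ga}\exp\bigl(-c|\xi-\xi_\ast|^2-c||\xi|^2-|\xi_\ast|^2|^2/|\xi-\xi_\ast|^2\bigr).
\]

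Next decompose $H=H^{\rm sing}+H^{\rm far}$ via a smooth cutoff at $|\xi-\xi_\ast|=\vsg$. The singular part, supported on $|\xi-\xi_\ast|\le\vsg$, integrates to $O(\vsg^{1+\ga})$ in either variable; combined with $\nu^{-1/2}(\xi)\lesssim(1+M)^{-\ga/2}$ on $\mathrm{supp}\,\chi_M$, it contributes at most $\vsg(1+M)^{-\ga/2}$ to both the row-sum $\sup_\xi\int|H|\,d\xi_\ast$ and the column-sum $\sup_{\xi_\ast}\int|H|\,d\xi$. For the regular part $H^{\rm far}$, bounded by a pure Gaussian tail, split the $\xi_\ast$ range into $|\xi_\ast|\le M/2$, where $|\xi-\xi_\ast|\ge M/2$ forces exponential smallness in $M$, and $|\xi_\ast|>M/2$, where $\nu^{-1/2}(\xi_\ast)\lesssim(1+M)^{-\ga/2}$ supplies an additional factor; polynomial weight ratios $w^{\pm\ell_2}$ are absorbed into the Gaussian once $\ell_2$ is fixed. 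This yields $\|T\|_{L^\infty\to L^\infty},\|T\|_{L^1\to L^1}\lesssim (1+M)^{-\ga}+\vsg$, and Lemma \ref{RT-lem} with $p_1=q_1=1$, $p_2=q_2=\infty$, $\la=1/2$ delivers the stated $L^2$ bound after factoring out the common $(1+M)^{-\ga/2}$ originating from $\nu^{-1/2}(\xi)$.

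For $\vth\ge 1$, expand $\pa_\xi^\vth(\chi_M\CK f)$ by Leibniz. Derivatives on $\chi_M$ localize to $\{M\le|\xi|\le M+1\}$ where all preceding bounds survive. Derivatives on $\CK f$ are moved onto $f$ by a standard commutator computation: using the integral representation of $\CK$ together with the pre-collisional change of variables, one obtains $\pa_\xi^\vth\CK f=\sum_{\vth'\le\vth}\CK^{(\vth,\vth')}(\pa_\xi^{\vth'}f)$, each $\CK^{(\vth,\vth')}$ sharing the same pointwise kernel bounds as $\CK$ up to a $\vth$-dependent constant. Invoking the $\vth=0$ estimate for each term produces the full sum on the right-hand side of \eqref{CK-l2}.

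The main obstacle is the $L^1\to L^1$ column-sum: with $\xi_\ast$ fixed and $\xi$ integrated, the weight ratio $w^{\ell_2}(\xi)/w^{\ell_2}(\xi_\ast)$ can be polynomially large when $|\xi|\gg|\xi_\ast|$, and must be controlled simultaneously against the integrable singularity $|\xi-\xi_\ast|^{-2+\ga}$ and the Gaussian decay while still producing smallness in $M$ and $\vsg$. The hierarchy ``fix $\ell_2$ large enough to defeat the weight growth against the Gaussian tail, then pick $\vsg$ small so that the singular piece is negligible, and finally take $M$ large so that the regular piece drops below any prescribed threshold'' is precisely what is encoded in the statement of the proposition.
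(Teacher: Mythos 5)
Your overall skeleton (realize $\chi_M\CK$ as an operator between weighted $L^1$ and $L^\infty$ spaces and interpolate by Riesz--Thorin at $\la=1/2$, reducing $\vth\geq 1$ to $\vth=0$ by the translation change of variables) is the same as the paper's, but the pointwise kernel bound on which both of your endpoint estimates rest is false. The kernel of $\CK$ is $\sqrt{\mu(\xi)/\mu(\xi_\ast)}\,\Fk(\xi,\xi_\ast)$, and completing the square in the $\Fk_2$ part gives the exponent
\begin{equation}\notag
-\tfrac14\bigl(|\xi|^2-|\xi_\ast|^2\bigr)-\tfrac18|\xi-\xi_\ast|^2-\tfrac18\frac{\bigl(|\xi|^2-|\xi_\ast|^2\bigr)^2}{|\xi-\xi_\ast|^2}
=-\frac{\bigl(|\xi|^2-|\xi_\ast|^2+|\xi-\xi_\ast|^2\bigr)^2}{8|\xi-\xi_\ast|^2}
=-\frac{\bigl(\xi\cdot(\xi-\xi_\ast)\bigr)^2}{2|\xi-\xi_\ast|^2},
\end{equation}
which vanishes identically whenever $\xi-\xi_\ast\perp\xi$: for $\xi_\ast=\xi+z$ with $z\perp\xi$ the conjugated kernel is of size $|z|^{-2+\ga}$ no matter how large $|z|$ is, so your claimed bound with $\exp\bigl(-c|\xi-\xi_\ast|^2-c\,||\xi|^2-|\xi_\ast|^2|^2/|\xi-\xi_\ast|^2\bigr)$ cannot hold (likewise the conjugated $\Fk_1$ piece equals $|\xi-\xi_\ast|^\ga e^{-|\xi|^2/2}$, with no decay in $\xi_\ast$ at all). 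With the Gaussian gone, your singular/far splitting, the absorption of the weight ratio $w^{\ell_2}(\xi)/w^{\ell_2}(\xi_\ast)$ "into the Gaussian", and the smallness of both Schur sums break down precisely in the dangerous regime $|\xi_\ast|\gtrsim M$ with $\xi-\xi_\ast$ nearly orthogonal to $\xi$ (the region $|\xi_\ast|\le M/2$, $|\xi|\ge M$ happens to survive, since there $\xi\cdot(\xi-\xi_\ast)\ge|\xi|^2/2$). This lack of decay of $\CK$'s kernel on polynomially weighted spaces is exactly the obstruction the paper emphasizes.

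The paper keeps the interpolation framework but gets the endpoints from the collisional structure rather than a kernel bound: the weighted $L^\infty$ endpoint is the Arkeryd--Esposito--Pulvirenti-type estimate (Lemmas \ref{g-ck} and \ref{g-ck-lem}), where the smallness $\varsigma=\varsigma(\ell_2)$ comes from taking the polynomial exponent $\ell_2$ large (note your $\vsg$ is instead a small cutoff radius, a different mechanism inconsistent with the statement's $\varsigma(\ell_2)$); and the weighted $L^1$ endpoint is proved directly on the three collision integrals $\CI_1,\CI_2,\CI_3$ via the pre/post-collisional change of variables $(\xi,\xi_\ast)\to(\xi',\xi'_\ast)$, the splitting $|\xi'_\ast|\lessgtr \ell_2^{-1/2}|\xi|$ to control $w^{2\ell_2}(\xi)/w^{2\ell_2}(\xi')$, and the bound $\chi_M\nu w^{2\ell_2}\mu^{1/(2\ell_2)}\le C_0$. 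To repair your argument you would have to replace the false kernel bound by the correct one above and re-estimate the region $|\xi_\ast|\gtrsim M$ using only the factor $e^{-(\xi\cdot(\xi-\xi_\ast))^2/(2|\xi-\xi_\ast|^2)}$ and the decay of $w^{-\ell_2}(\xi_\ast)$, which in effect forces you back to estimates of the paper's type.
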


\begin{proof}
We first consider the case that $\vth=0$.
From Lemma \ref{g-ck-lem}, it follows
\begin{align}\label{CK-ifif}
\|\chi_M\nu^{-1} w^{2\ell_2}\CK f\|_{L^\infty}\leq C_1\{(1+M)^{-\ga}+\varsigma\}\|w^{2\ell_2}f\|_{L^\infty},
\end{align}
where $C_1>0$ and independent of $M.$ Recalling Lemma \ref{RT-lem}, to prove \eqref{CK-l2} with $\vth=0$, it suffices to prove the following
\begin{align}\label{CK-l1}
\|\nu^{-1} w^{2\ell_2}\chi_M\CK f\|_{L^1}\leq C_2(1+M)^{-\ga}\|\nu w^{2\ell_2}f\|_{L^1},
\end{align}
for some $C_2>0$ and independent of $M.$ In view of \eqref{sp.cL}, we have
\begin{align}
\int_{\R^3}\chi_M\nu^{-1} w^{2\ell_2}|\CK f|d\xi\leq&
\int_{\R^3}\chi_M\nu^{-1} w^{2\ell_2}\int_{\R^3\times\S^2}B_0|\xi_\ast-\xi|^\ga \mu(\xi'_\ast)|f(\xi')|d\xi_\ast d\om d\xi
\notag\\&+\int_{\R^3}\chi_M\nu^{-1} w^{2\ell_2}\int_{\R^3\times\S^2}B_0|\xi_\ast-\xi|^\ga \mu(\xi')|f(\xi'_\ast)|d\xi_\ast d\om d\xi
\notag\\&+\int_{\R^3}\chi_M\nu^{-1} w^{2\ell_2}\int_{\R^3\times\S^2}B_0|\xi_\ast-\xi|^\ga \mu(\xi)|f(\xi_\ast)|d\xi_\ast d\om d\xi
\notag\\&=:\CI_1+\CI_2+\CI_3,\notag
\end{align}
where $\CI_i$ $(1\leq i \leq3)$ denote those terms on the right respectively. We now compute them individually.
First of all, by choosing $M>0$ sufficiently large, one sees that there exists a constant $C_0$ which is independent of $\ell_2$ such that
\begin{align}\label{ew-re}
\chi_M\nu w^{2\ell_2}\mu^{\frac{1}{2\ell_2}}\leq C_0.
\end{align}
For $\CI_3$, by \eqref{ew-re}, we then have
\begin{align}
|\CI_3|\leq CC_0\int_{\R^3}\chi_M\mu^{1/2}(\xi)d\xi\int_{\R^3}\lag\xi_\ast\rag^\ga |(w^{-2\ell_2}w^{2\ell_2}f)(\xi_\ast)|d\xi_\ast
\leq \frac{C}{(1+M)^\ga}\|\nu w^{2\ell_2}f\|_{L^1}.\notag
\end{align}
For $\CI_1$, if $|\xi_\ast'|<\frac{1}{\sqrt{\ell_2}}|\xi|$, on the one hand, it follows $|\xi'|^2\geq|\xi|^2-|\xi_\ast'|^2\geq (1-\ell_2^{-1})|\xi|^2,$
which gives
$$
\frac{w^{2\ell_2}(\xi)}{w^{2\ell_2}(\xi')}\leq \left(\frac{1}{1-\ell_2^{-1}}\right)^{2\ell_2}\leq e^2.
$$
On the other hand, it follows
$$
|\xi_\ast-\xi|^\ga\leq \lag\xi_\ast'\rag^\ga\lag\xi'\rag^\ga.
$$
Thus, one has
\begin{align}
|\CI_1|\leq&C\int_{\R^3}\int_{\R^3\times\S^2}B_0\chi_M\nu^{-1}(\xi)\frac{w^{2\ell_2}(\xi)}{w^{2\ell_2}(\xi')}\lag\xi'\rag^\ga
\lag\xi_\ast'\rag^\ga \mu(\xi'_\ast)w^{2\ell_2}(\xi')|f(\xi')|d\om d\xi_\ast d\xi\notag\\
\leq&\frac{C}{(1+M)^\ga}\int_{\R^3}\int_{\R^3\times\S^2}B_0\mu(\xi'_\ast)\lag\xi'\rag^\ga
\lag\xi_\ast'\rag^\ga w^{2\ell_2}(\xi')|f(\xi')|d\xi_\ast d\om d\xi
\leq \frac{C}{(1+M)^\ga}\|\nu w^{2\ell_2}f\|_{L^1},\notag
\end{align}
where a change of variables $(\xi,\xi_\ast)\rightarrow(\xi',\xi'_\ast)$ has been used.

If $|\xi_\ast'|\geq \frac{1}{\sqrt{\ell_2}}|\xi|$, one has by applying \eqref{ew-re} and a change of variables $(\xi,\xi_\ast)\rightarrow(\xi',\xi'_\ast)$ that
\begin{align}\label{I1-es}
|\CI_1|\leq&C\int_{\R^3}\chi_M\nu^{-1}(w^{2\ell_2}\mu^{\frac{1}{2\ell_2}})(\xi)d\xi\int_{\R^3\times\S^2}\lag\xi'\rag^\ga \lag\xi'_\ast\rag^\ga \mu^{\frac{1}{2}}(\xi'_\ast)|f(\xi')|d\xi_\ast d\om\notag\\
\leq& \frac{C}{(1+M)^\ga}\int_{\R^3\times\R^3}\lag\xi\rag^\ga \lag\xi_\ast\rag^\ga \mu^{\frac{1}{2}}(\xi_\ast)|f(\xi)|d\xi_\ast d\xi
\leq \frac{C}{(1+M)^\ga}\|\nu w^{2\ell_2}f\|_{L^1}.
\end{align}

Similarly, for $\CI_2$, if $|\xi'|<\frac{1}{\sqrt{\ell_2}}|\xi|$, one gets
$|\xi_\ast'|^2\geq|\xi|^2-|\xi'|^2\geq (1-\ell_2^{-1})|\xi|^2,$
which further implies
$$
\frac{w^{2\ell_2}(\xi)}{w^{2\ell_2}(\xi_\ast')}\leq \left(\frac{1}{1-\ell_2^{-1}}\right)^{2\ell_2}\leq e^2.
$$
Thus, it follows
\begin{align}
|\CI_2|\leq&C\int_{\R^3}\int_{\R^3\times\S^2}B_0
\chi_M\nu^{-1}(\xi)\frac{w^{2\ell_2}(\xi)}{w^{2\ell_2}(\xi_\ast')}\lag\xi'\rag^\ga \lag\xi'_\ast\rag^\ga\mu(\xi')w^{2\ell_2}(\xi_\ast')|f(\xi_\ast')|d\xi_\ast d\om d\xi\notag\\
\leq&\frac{C}{(1+M)^\ga}\int_{\R^3}\int_{\R^3\times\S^2}B_0
\lag\xi'\rag^\ga \lag\xi'_\ast\rag^\ga \mu(\xi')|f(\xi'_\ast)|d\xi_\ast d\om d\xi
\leq
\frac{C}{(1+M)^\ga}\|\nu w^{\ell_2}f\|_{L^1}.\notag
\end{align}
If $|\xi'|\geq\frac{1}{\sqrt{\ell_2}}|\xi|$, similarly for obtaining \eqref{I1-es}, one has
\begin{align}
|\CI_2|\leq&C\int_{\R^3}\chi_M\nu^{-1}(w^{2\ell_2}\mu^{\frac{1}{2\ell_2}})(\xi)d\xi\int_{\R^3\times\S^2}\lag\xi'\rag^\ga \lag\xi'_\ast\rag^\ga \mu^{\frac{1}{2}}(\xi')|f(\xi_\ast')|d\xi_\ast d\om\notag\\
\leq&\frac{C}{(1+M)^\ga}\int_{\R^3\times\R^3}\lag\xi\rag^\ga \lag\xi_\ast\rag^\ga \mu^{\frac{1}{2}}(\xi_\ast)|f(\xi)|d\xi_\ast d\xi
\leq \frac{C}{(1+M)^\ga}\|\nu w^{2\ell_2}f\|_{L^1}.\notag
\end{align}
Combing the above estimates together, we then see that \eqref{CK-l1} is true.
Next, we define linear operators
\begin{align}
T=T_1=T_2=\chi_{M} \CK:&\ L^{1}(\nu w^{2\ell_2})\cap L^{\infty}(\nu w^{2\ell_2})\rightarrow L^{1}(\nu^{-1}w^{2\ell_2})\cap L^{\infty}(\nu^{-1}w^{2\ell_2})\notag\\
&f\mapsto \chi_{M} \CK f, \notag
\end{align}
where $L^p(W(\xi))$ $(p\in[1,+\infty])$ is a Lebesgue space with  weighted measure $d\mathfrak{M}=Wd\xi.$
Therefore \eqref{CK-l2} with $\vth=0$ follows from \eqref{CK-l1} and \eqref{CK-ifif} as well as \eqref{RT-it}.

We now turn to show that \eqref{CK-l2} is also true for $\vth>0.$ In this case, we first have by a change of variables $\xi_\ast-\xi\rightarrow u$ that
\begin{align}\label{CK-de-exp-p1}
\pa^\vth_\xi(\chi_M\CK f)=&\sum\limits_{\vth'+\vth''\leq\vth}C_\vth^{\vth',\vth''}
(\pa^{\vth-\vth'-\vth''}_\xi\chi_M)\int_{\R^3\times\S^2}B_0|u|^\ga (\pa^{\vth''}_\xi\mu)(\xi+u_{\perp})
(\pa^{\vth'}_\xi f)(\xi+u_{\parallel}) d\om du
\notag\\&+\sum\limits_{\vth'+\vth''\leq\vth}C_\vth^{\vth',\vth''}
(\pa^{\vth-\vth'-\vth''}_\xi\chi_M)\int_{\R^3\times\S^2}B_0|u|^\ga
(\pa^{\vth''}_\xi\mu)(\xi+u_{\parallel})(\pa^{\vth'}_\xi f)(\xi+u_{\perp})d\om du
\notag\\&+\sum\limits_{\vth'+\vth''\leq\vth}C_\vth^{\vth',\vth''}(\pa^{\vth-\vth'-\vth''}_\xi\chi_M)
\int_{\R^3\times\S^2}B_0|u|^\ga (\pa^{\vth''}_\xi\mu)(\xi)(\pa^{\vth'}_\xi f)(u+\xi)d\om du,
\end{align}
where we have also used the notations $u_{\parallel}=(u\cdot\om)\om$ and $u_{\perp}=u-u_{\parallel}$. Then, changing back to the original variables, one has
\begin{align}\label{CK-de-exp-p2}
\pa^\vth_\xi(\chi_M\CK f)=&\sum\limits_{\vth'+\vth''\leq\vth}C_\vth^{\vth',\vth''}
(\pa^{\vth-\vth'-\vth''}_\xi\chi_M)\int_{\R^3\times\S^2}B_0|\xi_\ast-\xi|^\ga (\pa^{\vth''}_\xi\mu)(\xi_\ast')
(\pa^{\vth'}_\xi f)(\xi') d\om d\xi_\ast
\notag\\&-\sum\limits_{\vth'+\vth''\leq\vth}C_\vth^{\vth',\vth''}
(\pa^{\vth-\vth'-\vth''}_\xi\chi_M)\int_{\R^3\times\S^2}B_0|\xi_\ast-\xi|^\ga
(\pa^{\vth''}_\xi\mu)(\xi')(\pa^{\vth'}_\xi f)(\xi_\ast')d\om d\xi_\ast
\notag\\&+\sum\limits_{\vth'+\vth''\leq\vth}C_\vth^{\vth',\vth''}(\pa^{\vth-\vth'-\vth''}_\xi\chi_M)
\int_{\R^3\times\S^2}B_0|\xi_\ast-\xi|^\ga (\pa^{\vth''}_\xi\mu)(\xi)(\pa^{\vth'}_\xi f)(\xi_\ast)d\om d\xi_\ast.
\end{align}
We now define the following linear continuous operators
\begin{align}
T_\vth=T_{\vth,1}=T_{\vth,2}: &\ L^{1}(\nu w^{2\ell_2})\cap L^{\infty}(\nu w^{2\ell_2})
\rightarrow L^{1}(\nu^{-1}w^{2\ell_2})\cap L^{\infty}(\nu^{-1}w^{2\ell_2})\notag,
\end{align}
with
\begin{align}
T_\vth g=T_{\vth,1}g=T_{\vth,2}g=&\sum\limits_{\vth'+\vth''\leq\vth}C_\vth^{\vth',\vth''}
(\pa^{\vth-\vth'-\vth''}_\xi\chi_M)\int_{\R^3\times\S^2}B_0|\xi_\ast-\xi|^\ga (\pa^{\vth''}_\xi\mu)(\xi_\ast')
g(\xi') d\om d\xi_\ast
\notag\\&+\sum\limits_{\vth'+\vth''\leq\vth}C_\vth^{\vth',\vth''}
(\pa^{\vth-\vth'-\vth''}_\xi\chi_M)\int_{\R^3\times\S^2}B_0|\xi_\ast-\xi|^\ga
(\pa^{\vth''}_\xi\mu)(\xi')g(\xi_\ast')d\om d\xi_\ast
\notag\\&+\sum\limits_{\vth'+\vth''\leq\vth}C_\vth^{\vth',\vth''}(\pa^{\vth-\vth'-\vth''}_\xi\chi_M)
\int_{\R^3\times\S^2}B_0|\xi_\ast-\xi|^\ga (\pa^{\vth''}_\xi\mu)(\xi)g(\xi_\ast)d\om d\xi_\ast,\notag
\end{align}
for any given $\vth\in\Z_+^3$.

Because $|\pa^{\vth''}_\xi\mu|\leq C_{\vth''}\mu^{\frac{1}{2}}$, one can also deduce by performing the similar calculations as for obtaining
\eqref{CK-ifif} and \eqref{CK-l1}  that
\begin{align}\label{CK-ifif-h}
\|\nu^{-1} w^{2\ell_2}T_{\vth,2}(\pa^{\vth'}_\xi f)\|_{L^\infty}\leq \tilde{C}_1\{(1+M)^{-\ga}+\varsigma\}\|\nu w^{2\ell_2}\pa^{\vth'}_\xi f\|_{L^\infty},
\end{align}
and
\begin{align}\label{CK-l1-h}
\|\nu^{-1} w^{2\ell_2}T_{\vth,1}(\pa^{\vth'}_\xi f)\|_{L^1}\leq \frac{\tilde{C}_2}{(1+M)^\ga}\|\nu w^{2\ell_2}\pa^{\vth'}_\xi f\|_{L^1}.
\end{align}
Then, \eqref{CK-ifif-h} together with \eqref{CK-l1-h} further gives
\begin{align}\label{CK-l2-h}
\|\nu^{-1/2} w^{2\ell_2}T_{\vth}(\pa^{\vth'}_\xi f)\|\leq \sqrt{\tilde{C}_1\tilde{C}_2}
\left\{(1+M)^{-\ga}+\varsigma\right\}^{\frac{1}{2}}(1+M)^{-\ga/2}\|\nu^{1/2} w^{2\ell_2}\pa^{\vth'}_\xi f\|,
\end{align}
according to Lemma \ref{RT-lem}. From \eqref{CK-l2-h} and \eqref{CK-de-exp-p2}, we see that \eqref{CK-l2} is also valid in the case $\vth>0$ .
This ends the proof of Proposition \ref{CK-l2-pro}.
\end{proof}

\section{Local existence}\label{sec.le}
The goal of this section is to construct the local existence of the remainder problem \eqref{GR-eq} and \eqref{GR-id} in the Sobolev space $W^{N,\infty}$ for an arbitrary positive integer $N$. 

Since $G_1$ and $G_2$ are already given by  \eqref{G1-exp} and  \eqref{G2-exp},
to solve \eqref{F-eq} and \eqref{F-id} it suffices
to determine $G_R$ by  the Cauchy problem \eqref{GR-eq} and \eqref{GR-id}. Here, we also recall that $\beta=\beta(t)$ is determined by solving \eqref{bet-G} or equivalently \eqref{be-exp} in terms of $G_R$.
To do this, one crucial idea behind the proof is to split $G_R$ as $\sqrt{\mu}G_R=G_{R,1}+\sqrt{\mu}G_{R,2}$, where $G_{R,1}$ and $G_{R,2}$ satisfy
\begin{align}\label{GR1-eq}
\pa_t G_{R,1}&-\frac{\beta'}{\beta}\na_\xi\cdot(\xi G_{R,1})
-\al\na_\xi\cdot(A\xi G_{R,1})+\beta^\ga \nu G_{R,1}\notag\\
=&\beta^\ga\chi_M \CK G_{R,1}-\frac{\beta'}{2\beta} |\xi|^2\sqrt{\mu}G_{R,2}-\frac{\al}{2} \xi\cdot(A\xi)\sqrt{\mu}G_{R,2}
-\al^{1-m}\sqrt{\mu}\pa_t G_1
-\al^{2-m}\sqrt{\mu}\pa_t G_2\notag\\&+\al^{3-m}\beta_1\na_\xi\cdot(\xi\mu)+\al^{1-m}\frac{\beta'}{\beta}\na_\xi\cdot(\xi\sqrt{\mu}G_1)
+\al^{2-m}\frac{\beta'}{\beta}\na_\xi\cdot(\xi\sqrt{\mu}G_2)
+\al^{3-m}\na_\xi\cdot(A\xi\sqrt{\mu}G_2)\notag\\&+\al^{3-m}\beta^\ga \{Q(\sqrt{\mu}G_1,\sqrt{\mu}G_2)+Q(\sqrt{\mu}G_2,\sqrt{\mu}G_1)\}+\al^{4-m}\beta^\ga Q(\sqrt{\mu}G_2,\sqrt{\mu}G_2)
\notag\\&+\al\beta^\ga\{Q(\sqrt{\mu}G_1+\al \sqrt{\mu}G_2,\sqrt{\mu}G_{R})+Q(\sqrt{\mu}G_{R},\sqrt{\mu}G_1+\al \sqrt{\mu}G_2)\}\notag\\
&+\al^m\beta^\ga Q(\sqrt{\mu}G_{R},\sqrt{\mu}G_{R}),
\end{align}
\begin{align}
G_{R,1}(0,\xi)=G_{R,0},\notag
\end{align}
\begin{align}\label{GR2-eq}
\pa_t G_{R,2}&-\frac{\beta'}{\beta}\na_\xi\cdot(\xi G_{R,2})
-\al\na_\xi\cdot(A\xi G_{R,2})+\beta^\ga L G_{R,2}
=\beta^\ga(1-\chi_M)\mu^{-\frac{1}{2}} \CK G_{R,1},
\end{align}
and
\begin{align}
G_{R,2}(0,\xi)=0,\notag
\end{align}
respectively. 

\begin{theorem}[Local existence]\label{loc.ex}
Let $\ga\in(0,1]$, $m\in(2,3)$, $\ell_\infty\gg 5$ and assume ${\rm tr}A=0$.
There exists a constant $M_0>0$ and a suitably small constant $\al_0>0$  such that if $\al\in(0,\al_0)$
and
\begin{align}\label{id-tt}
\sum\limits_{|\vth|\leq N}\|w^{\ell_\infty}\pa_\xi^\vth G_{R,0}\|_{L^\infty}\leq M_0,
\end{align}
for an integer $N\geq 1$,
then there exists $T_0>0$ which may depend on $\al$ and $M_0$ such that \eqref{GR-eq} and \eqref{GR-id} admits a unique local  solution $G_R(t,\xi)$ satisfying $\sqrt{\mu}G_R=G_{R,1}+\sqrt{\mu}G_{R,2}$ with the estimate
\begin{align}
\sup\limits_{0\leq t\leq T_0}\sum\limits_{|\vth|\leq N}&
\left\{\left\|w^{\ell_\infty}\beta^{2\ga}\pa_\xi^\vth G_{R,1}\right\|_{L^\infty}
+\left\|w^{\ell_\infty}\beta^{2\ga}\pa_\xi^\vth  G_{R,2}\right\|_{L^\infty}\right\}
\leq 2M_0,\notag
\end{align}
where
\begin{align}
\beta^\ga(t)\sim 1+\ga\vho_0\al^2t.\notag
\end{align}
\end{theorem}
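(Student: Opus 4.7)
The plan is to construct $(G_{R,1},G_{R,2})$ on a short interval $[0,T_0]$ by a contraction argument on the coupled \emph{linear} problems obtained from \eqref{GR1-eq}--\eqref{GR2-eq} after freezing an iterate $(h_1,h_2)$ inside the coefficients and the nonlinear forcings. Fix $T_0>0$ small (to be chosen depending on $\al$ and $M_0$) and introduce the closed ball
\begin{align*}
\mathfrak{B}_{2M_0}=\Big\{(h_1,h_2)\in C([0,T_0];W^{N,\infty}_{w^{\ell_\infty}})\,:\,\sup_{0\leq t\leq T_0}\sum_{|\vth|\leq N}\big(\|w^{\ell_\infty}\beta^{2\ga}\pa_\xi^\vth h_1\|_{L^\infty}+\|w^{\ell_\infty}\beta^{2\ga}\pa_\xi^\vth h_2\|_{L^\infty}\big)\leq 2M_0\Big\}.
\end{align*}
For each $(h_1,h_2)\in\mathfrak{B}_{2M_0}$ the first step is to determine the scaling $\beta=\beta(h_1,h_2)$ on $[0,T_0]$ by solving the ODE \eqref{eq.betaR} with $\sqrt\mu G_R$ replaced by $h_1+\sqrt\mu h_2$. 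Since $\vho_0>0$ is explicit, $\vho_1$ is a finite constant, and the $G_R$-contribution carries a factor $\al^{m+1}$ with $m>2$, Picard--Lindel\"of yields a unique $C^1$ solution satisfying
\begin{align*}
\tfrac{1}{2}\vho_0\al^2\beta^{-\ga}\leq\frac{\beta'}{\beta}\leq\tfrac{3}{2}\vho_0\al^2\beta^{-\ga},
\end{align*}
so integrating gives $\beta^\ga(t)\sim 1+\ga\vho_0\al^2t$ on $[0,T_0]$; in particular $\beta^{2\ga}$ stays comparable to $1$ uniformly in this short window.

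With $\beta$ so determined, solve the two linear equations separately. Equation \eqref{GR1-eq} is transport-damped, so introduce the characteristic
\begin{align*}
\frac{dV}{ds}=-\frac{\beta'(s)}{\beta(s)}V-\al AV,\qquad V(t;t,\xi)=\xi,
\end{align*}
which integrates explicitly to $V(s;t,\xi)=\frac{\beta(s)}{\beta(t)}e^{-\al A(s-t)}\xi$, and represent $G_{R,1}$ by Duhamel with the damping factor $e^{-\int_s^t\beta^\ga\nu(V(\tau))\,d\tau}$. Equation \eqref{GR2-eq} contains the full linear operator $L$, so invert it via the $L^\infty$ semigroup generated by $\pa_t-\tfrac{\beta'}{\beta}\na_\xi\cdot(\xi\cdot)-\al\na_\xi\cdot(A\xi\cdot)+\beta^\ga L$, exploiting the Vidav--Grad splitting $L=\nu-K$ and the kernel bounds recalled after \eqref{sp.L}. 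For the a priori estimate, combine the damping $e^{-\int_s^t\beta^\ga\nu\,d\tau}$, the smallness of $\beta^\ga\chi_M\CK$ for large $M$ from Lemma \ref{g-ck-lem}, the fact that $(1-\chi_M)\mu^{-1/2}\CK G_{R,1}$ is supported where $\mu^{-1/2}$ is bounded, and the $L^\infty$-boundedness of the explicit inhomogeneities (involving $G_1,G_2,\beta_0,\beta_1$) to obtain
\begin{align*}
\sum_{|\vth|\leq N}\|w^{\ell_\infty}\beta^{2\ga}\pa_\xi^\vth(G_{R,1},G_{R,2})\|_{L^\infty}\leq M_0+C(\al,M_0)T_0,
\end{align*}
so the map $\Phi:(h_1,h_2)\mapsto(G_{R,1},G_{R,2})$ sends $\mathfrak{B}_{2M_0}$ into itself once $T_0=T_0(\al,M_0)$ is chosen small enough. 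The derivative estimates $|\vth|\leq N$ follow inductively: commuting $\pa_\xi^\vth$ through the transport part produces constant-coefficient terms of the same order which are closed by Gronwall, while the commutation with the collision term is controlled by the weighted bounds in Proposition \ref{CK-l2-pro} and Lemma \ref{g-ck-lem}.

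The main obstacle, and genuinely the delicate step, is the contraction. Given two iterates $(h_1,h_2),(\bar h_1,\bar h_2)\in\mathfrak{B}_{2M_0}$, the associated scalings, characteristics, and semigroup generators all differ. The strategy is first to establish the quantitative stability
\begin{align*}
|(\beta^\ga(h)-\beta^\ga(\bar h))(t)|\leq C(T_0)\al\sup_{0\leq s\leq t}\|w^{\ell_\infty}(h_1-\bar h_1,h_2-\bar h_2)(s)\|_{L^\infty},
\end{align*}
by subtracting the two copies of \eqref{eq.betaR} and applying Gronwall. From this and the linearity of the characteristic ODE one deduces $|V(h)-V(\bar h)|\lesssim\al T_0\|h-\bar h\|$ on $[0,T_0]$, and the $|\xi-\xi_\ast|^\ga$-dependence of the kernels makes $\CK$ and $K$ uniformly Lipschitz in the $\beta^\ga$-data by the mean value theorem. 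Feeding these differences into the Duhamel representation yields a prefactor of the form $C(\al+T_0)$ in front of $\|\Phi(h_1,h_2)-\Phi(\bar h_1,\bar h_2)\|$, so shrinking $\al_0$ and $T_0$ makes $\Phi$ a strict contraction on $\mathfrak{B}_{2M_0}$ and produces a unique fixed point. Finally, adding \eqref{GR1-eq} to $\sqrt\mu$ times \eqref{GR2-eq} and using $\chi_M+(1-\chi_M)=1$ shows that $\sqrt\mu G_R:=G_{R,1}+\sqrt\mu G_{R,2}$ solves \eqref{GR-eq}--\eqref{GR-id}, which completes the proof.
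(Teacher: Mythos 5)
Your proposal follows essentially the same route as the paper's proof: freeze an iterate $(h_1,h_2)$ in the $\beta^{2\ga}$-weighted ball, determine the scaling from the ODE for $\beta$ (giving $\tfrac12\vho_0\al^2\beta^{-\ga}\leq\beta'/\beta\leq\tfrac32\vho_0\al^2\beta^{-\ga}$ and $\beta^\ga\sim 1+\ga\vho_0\al^2 t$), represent the frozen linear problems by Duhamel along the characteristics of the drift with the $\nu$-damping, and close the contraction through the Lipschitz stability of $\beta$, of the characteristics $V$, and of the generator in the iterate, exactly as in the paper. Two minor, easily repaired slips: the explicit characteristic should read $V(s;t,\xi)=\frac{\beta(t)}{\beta(s)}e^{-(s-t)\al A}\xi$ (your ratio is inverted), and both the self-map and contraction constants must include the large-$M$ smallness $(1+M)^{-\ga}+\varsigma$ coming from the $\chi_M\CK$ terms (they are not $O(T_0+\al)$), which is precisely what the lemma you invoke provides.
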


\begin{proof}
Our proof is based on the Duhamel's principle and contraction mapping method. It is convenient to look for the weighted form
$
[g_{1},g_{2}](t,\xi)={\beta}^{2\ga}(t)[G_{R,1}(t,\xi),G_{R,2}(t,\xi)].
$ 
For the purpose, we consider the following linear inhomogeneous equations for the unknown $[g_{1},g_{2}](t,\xi)$:
\begin{align}\label{ap-g1-eq}
\pa_t g_{1}
&-\frac{\tilde{\beta}'}{\tilde{\beta}}\xi \cdot\na_\xi  g_{1}
-\frac{(2\ga+3)\tilde{\beta}'}{\tilde{\beta}}g_{1}
-\al A\xi\cdot\na_\xi g_{1}
+\nu\tilde{\beta}^{\ga}g_{1}\notag\\
=&\tilde{\beta}^\ga\chi_M \CK h_1-\frac{\tilde{\beta}'}{2\tilde{\beta}}|\xi|^2\sqrt{\mu}h_2-\frac{\al}{2} \xi\cdot(A\xi)\sqrt{\mu}h_2
-\al^{1-m}\tilde{\beta}^{2\ga}\sqrt{\mu}\pa_t H_1-\al^{2-m}\tilde{\beta}^{2\ga}\sqrt{\mu}\pa_t H_2\notag\\&+\al^{3-m}\tilde{\beta}^{2\ga}\tilde{\beta}_1\na_\xi\cdot(\xi\mu)
+\al^{1-m}\tilde{\beta}^{2\ga-1}\tilde{\beta}'\na_\xi\cdot(\xi\sqrt{\mu}H_1)
+\al^{2-m}\tilde{\beta}^{2\ga-1}\tilde{\beta}'\na_\xi\cdot(\xi\sqrt{\mu}H_2)
\notag\\&+\al^{3-m}\tilde{\beta}^{2\ga}\na_\xi\cdot(A\xi\sqrt{\mu}H_2)
+\al^{3-m}\tilde{\beta}^{3\ga}\{Q(\sqrt{\mu}H_1,\sqrt{\mu}H_2)+Q(\sqrt{\mu}H_2,\sqrt{\mu}H_1)\}
\notag\\&+\al^{4-m}\tilde{\beta}^{3\ga} Q(\sqrt{\mu}H_2,\sqrt{\mu}H_2)
+\al\tilde{\beta}^\ga\{Q(\sqrt{\mu}G_1+\al \sqrt{\mu}G_2,\sqrt{\mu}h)+Q(\sqrt{\mu}h,\sqrt{\mu}G_1+\al \sqrt{\mu}G_2)\}
\notag\\&+\al^m\tilde{\beta}^{-\ga} Q(\sqrt{\mu}h,\sqrt{\mu}h),
\end{align}
\begin{align}
g_{1}(0,\xi)=G_{R,0},\label{ap-g1-id}
\end{align}
\begin{align}\label{ap-g2-eq}
\pa_t g_{2}
&-\frac{\tilde{\beta}'}{\tilde{\beta}}\xi \cdot\na_\xi  g_{2}
-\frac{(2\ga+3)\tilde{\beta}'}{\tilde{\beta}}g_{2}
-\al A\xi\cdot\na_\xi g_{2}
+\nu\tilde{\beta}^{\ga}g_{2}\notag\\
=&\tilde{\beta}^\ga\chi_M  K  h_{2}
+\tilde{\beta}^\ga (1-\chi_M)\mu^{-\frac{1}{2}} \CK h_{1},
\end{align}
and
\begin{align}
g_{2}(0,\xi)=0.
\label{ap-g2-id}
\end{align}
Here, we have given
\begin{align}
\sqrt{\mu}h=h_1+\sqrt{\mu}h_2=\tilde{\beta}^{2\ga}(H_{R,1}+\sqrt{\mu}H_{R,2}),\ \sqrt{\mu}H_{R}=H_{R,1}+\sqrt{\mu}H_{R,2},\label{h-def}
\end{align}
\begin{align}
H=\mu+\sqrt{\mu}\{\al H_1+\al^2 H_2+\al^m H_R\},\notag
\end{align}
\begin{align}
H_1=-\tilde{\beta}^{-\ga}L^{-1}\{\xi\cdot A\xi\mu^{\frac{1}{2}}\},\label{H1-exp}
\end{align}
\begin{align}
H_2=&\tilde{\beta}^{-\ga}L^{-1}\left\{\tilde{\beta}^\ga\Ga(H_1,H_1)+\tilde{\beta}_0\na_\xi\cdot(\xi\mu)\mu^{-\frac{1}{2}}
+\na_\xi\cdot(A\xi\sqrt{\mu}H_1)\mu^{-\frac{1}{2}}\right\},\label{H2-exp}
\end{align}
\begin{align}
\frac{\tilde{\beta}'}{\tilde{\beta}}=-\frac{\al}{3}\int_{\R^3}\xi\cdot A\xi Hd\xi,\ \tilde{\beta}'=\frac{d\tilde{\beta}}{dt},\ \tilde{\beta}(0)=1,\label{bet-H}
\end{align}
\begin{align}
\frac{\tilde{\beta}'}{\tilde{\beta}}=\tilde{\beta}_0\al^2+\tilde{\beta}_1\al^3,\label{beh-exp}
\end{align}
\begin{align}
\tilde{\beta}_0=-\frac{1}{3}\int_{\R^3}\xi\cdot A\xi \sqrt{\mu} H_1d\xi=\vho_0\tilde{\beta}^{-\ga},\label{be0h-def}
\end{align}
\begin{align}\label{be1h-def}
\tilde{\beta}_1=-\frac{1}{3}\int_{\R^3}\xi\cdot A\xi \sqrt{\mu}H_2d\xi-\frac{\al^{m-2}}{3}\int_{\R^3}\xi\cdot A\xi \sqrt{\mu}H_Rd\xi
=\vho_1\tilde{\beta}^{-2\ga}+\vho_R\tilde{\beta}^{-2\ga},
\end{align}
with
\begin{align}
\vho_1=&-\frac{\tilde{\beta}^{2\ga}}{3}\int_{\R^3}\xi\cdot A\xi \sqrt{\mu}H_2d\xi\notag\\
=&-\frac{1}{3}\int_{\R^3}\xi\cdot A\xi \sqrt{\mu}\bigg\{L^{-1}\Big\{\Ga(L^{-1}\{\xi\cdot A\xi\mu^{\frac{1}{2}}\},L^{-1}\{\xi\cdot A\xi\mu^{\frac{1}{2}}\})+\vho_0\na_\xi\cdot(\xi\mu)\mu^{-\frac{1}{2}}
\notag\\&\qquad\qquad-\na_\xi\cdot(A\xi\sqrt{\mu}L^{-1}\{\xi\cdot A\xi\mu^{\frac{1}{2}}\})\mu^{-\frac{1}{2}}\Big\}\bigg\}d\xi,\notag
\end{align}
and
\begin{align}
\vho_R=-\frac{\tilde{\beta}^{2\ga}\al^{m-2}}{3}\int_{\R^3}\xi\cdot A\xi \sqrt{\mu}H_Rd\xi.\notag
\end{align}
Consequently, \eqref{bet-H} and \eqref{be1h-def} give
\begin{align}
\frac{d\tilde{\beta}^{\ga}}{dt}=\ga\left\{\vho_0\al^2+\vho_1\al^3\tilde{\beta}^{-\ga}+\vho_R\al^3\tilde{\beta}^{-\ga}\right\}.\label{tbeta-ga}
\end{align}
It should be pointed out that both $\vho_0$ and $\vho_1$ are independent of $\tilde{\beta}.$

Let $[g_1,g_2]$ be a solution of the coupled problems \eqref{ap-g1-eq}, \eqref{ap-g1-id} and \eqref{ap-g2-eq}, \eqref{ap-g2-id} with $[h_1,h_2]$ being given.
Then the solution operator $\CN$ is formally defined as
$$
[g_1,g_2]=\CN([h_1,h_2]).
$$
We aim to prove that there exists a sufficiently small $T_0>0$ such that the solution mapping $\CN([\cdot,\cdot])$ has a unique fixed point in
some Banach space by adopting the contraction mapping method. In fact, from \eqref{id-tt} and \eqref{ap-g2-id}, one has
\begin{align*}
\sum\limits_{|\vth|\leq N}\|w^{\ell_\infty}\pa^{\vth}_\xi g_1(0,\xi)\|_{L^\infty}\leq M_0.
\end{align*}
Thus we can define the following Banach space
\begin{equation*}
\begin{split}
\FY_{\al,T}=&\Big\{(\CG_1,\CG_2)\bigg| \sup\limits_{0\leq t\leq T}\sum\limits_{|\vth|\leq N}\left\{\|w^{\ell_\infty}\pa_\xi^{\vth}\CG_1(t)\|_{L^\infty}+\|w^{\ell_\infty}\pa_\xi^{\vth}\CG_2(t)\|_{L^\infty}\right\}
\leq2M_0,\ \\
&\qquad\qquad\qquad\CG_1(0)=G_{R,0},\ \CG_2(0)=0,\ \lag\CG_1,[1,\xi,\frac{1}{2}|\xi|^2]\rag
+\lag\CG_2,[1,\xi,\frac{1}{2}|\xi|^2]\mu^{\frac{1}{2}}\rag=0
\Big\},
\end{split}
\end{equation*}
associated with the norm
$$
\|[\CG_1,\CG_2]\|_{\FY_{\al,T}}=\sup\limits_{0\leq t\leq T}\sum\limits_{|\vth|\leq N}\left\{\|w^{\ell_\infty}\pa_\xi^{\vth}\CG_1(t)\|_{L^\infty}+\|w^{\ell_\infty}\pa_\xi^{\vth}\CG_2(t)\|_{L^\infty}\right\}.
$$
We now show that
\begin{align*}
\CN: \FY_{\al,T}\rightarrow \FY_{\al,T},
\end{align*}
is well-defined and $\CN$ is a contraction mapping for some $T>0$. To do this, we start from the following approximation equations
\begin{align}\label{h1-eq}
\pa_t &(w^{\ell_\infty}\pa_\xi^\vth g_{1})
-\frac{\tilde{\beta}'}{\tilde{\beta}}\xi \cdot\na_\xi(w^{\ell_\infty} \pa_\xi^\vth  g_{1})
-\frac{(2\ga+3)\tilde{\beta}'}{\tilde{\beta}}w^{\ell_\infty} \pa_\xi^\vth  g_{1}
+\frac{2\ell_{\infty}\tilde{\beta}'}{\tilde{\beta}}\frac{|\xi|^2}{1+|\xi|^2}w^{\ell_\infty}\pa_\xi^{\vth}g_{1}
\notag\\&-\al A\xi\cdot\na_\xi( w^{\ell_\infty} \pa_\xi^\vth g_{1})
+2\ell\al\frac{\xi\cdot A\xi}{1+|\xi|^2}w^{\ell_\infty}\pa_\xi^{\vth}g_{1}+\nu\tilde{\beta}^{\ga} w^{\ell_\infty} \pa_\xi^\vth g_{1}\notag\\
=&\sum\limits_{\vth'\leq\vth}C_\vth^{\vth'} w^{\ell_\infty}\tilde{\beta}^{\ga}\pa_\xi^{\vth'}(\chi_M \CK)\pa_\xi^{\vth-\vth'}h_1
+\frac{\tilde{\beta}'}{\tilde{\beta}} {\bf 1}_{|\vth|>0}\sum\limits_{|\vth'|=1}
C_\vth^{\vth'}w^\ell\pa_\xi^{\vth'}\xi\cdot\na_\xi\pa_\xi^{\vth-\vth'}h_1
\notag\\&+\al{\bf 1}_{|\vth|>0}\sum\limits_{|\vth'|=1}C_\vth^{\vth'}
w^{\ell_\infty}\pa_\xi^{\vth'}(A\xi)\cdot\na_\xi\pa_\xi^{\vth-\vth'}h_1
-{\bf 1}_{|\vth|>0}C_\vth^{\vth'}\tilde{\beta}^{\ga} \pa_\xi^{\vth'}\nu w^{\ell_\infty} \pa_\xi^{\vth-\vth'} h_1
-\frac{\tilde{\beta}'}{2\tilde{\beta}} w^{\ell_\infty}\pa_\xi^{\vth}\left\{|\xi|^2\sqrt{\mu}h_2\right\}
\notag\\&-\frac{\al}{2} w^{\ell_\infty}\pa_\xi^{\vth}\left\{\xi\cdot(A\xi)\sqrt{\mu}h_{2}\right\}
-\al^{1-m}\tilde{\beta}^{2\ga}w^{\ell_\infty}\pa_\xi^{\vth}\left\{\sqrt{\mu}\pa_t H_1\right\}
-\al^{2-m}\tilde{\beta}^{2\ga}w^{\ell_\infty}\pa_\xi^{\vth}\left\{\sqrt{\mu}\pa_t H_2\right\}
\notag\\&+\al^{3-m}\tilde{\beta}^{2\ga}\tilde{\beta}_1w^{\ell_\infty}\pa_\xi^{\vth}\left\{\na_\xi\cdot(\xi\mu)\right\}
+\al^{1-m}\tilde{\beta}'\tilde{\beta}^{2\ga-1} w^{\ell_\infty}\pa_\xi^{\vth}\left\{\na_\xi\cdot(\xi\sqrt{\mu}H_1)\right\}
+\al^{2-m}\tilde{\beta}'\tilde{\beta}^{2\ga-1}w^{\ell_\infty}\pa_\xi^{\vth}\left\{\na_\xi\cdot(\xi\sqrt{\mu}H_2)\right\}
\notag\\&+\al^{3-m}\tilde{\beta}^{2\ga} w^{\ell_\infty}\na_\xi\cdot(A\xi\sqrt{\mu}H_2)
+\al^{3-m}\tilde{\beta}^{3\ga}w^{\ell_\infty}\pa_\xi^{\vth}\{Q(\sqrt{\mu}G_1,\sqrt{\mu}G_2)+Q(\sqrt{\mu}G_2,\sqrt{\mu}G_1)\}
\notag\\&+\al^{4-m}\tilde{\beta}^{3\ga}w^{\ell_\infty} \pa_\xi^{\vth}Q(\sqrt{\mu}G_2,\sqrt{\mu}G_2)
+\al^m \tilde{\beta}^{-\ga}w^{\ell_\infty} \pa_\xi^{\vth}Q(\sqrt{\mu} h,\sqrt{\mu} h)\notag\\&+\al \tilde{\beta}^{\ga}w^{\ell_\infty}\pa_\xi^{\vth}\{Q(\sqrt{\mu}G_1+\al \sqrt{\mu}G_2,\sqrt{\mu}h)
+Q(\sqrt{\mu}h,\sqrt{\mu}G_1+\al \sqrt{\mu}G_2)\},
\end{align}
\begin{align}
g_{1}(0,\xi)=G_{R,0},\label{h1-id}
\end{align}
\begin{align}\label{h2-eq}
\pa_t (w^{\ell_\infty}\pa_\xi^\vth g_{2})&
-\frac{\tilde{\beta}'}{\tilde{\beta}}\xi \cdot\na_\xi(w^{\ell_\infty} \pa_\xi^\vth  g_{2})
-\frac{(2\ga+3)\tilde{\beta}'}{\tilde{\beta}}w^{\ell_\infty} \pa_\xi^\vth  g_{2}
+\frac{2{\ell_\infty}\tilde{\beta}'}{\tilde{\beta}}\frac{|\xi|^2}{1+|\xi|^2}w^{\ell_\infty}\pa_\xi^{\vth}g_{2}
\notag\\&-\al A\xi\cdot\na_\xi( w^{\ell_\infty}\pa_\xi^\vth g_{2})+2{\ell}\al\frac{\xi\cdot A\xi}{1+|\xi|^2}w^{\ell_\infty}\pa_\xi^{\vth}g_{2}
+\tilde{\beta}^\ga \nu w^{\ell_\infty} \pa_\xi^\vth g_{2}\notag\\
=&\sum\limits_{\vth'\leq\vth}C_\vth^{\vth'}\tilde{\beta}^\ga w^{\ell_\infty}(\pa_\xi^{\vth'} K)\pa_\xi^{\vth-\vth'}h_2
+\frac{\tilde{\beta}'}{\tilde{\beta}} {\bf 1}_{|\vth|>0}\sum\limits_{|\vth'|=1}
C_\vth^{\vth'}w^{\ell_\infty}\pa_\xi^{\vth'}\xi\cdot\na_\xi\pa_\xi^{\vth-\vth'}h_{2}
\notag\\&
-{\bf 1}_{|\vth|>0}C_\vth^{\vth'}\tilde{\beta}^\ga w^{\ell_\infty}\pa_\xi^{\vth'}\nu  \pa_\xi^{\vth-\vth'}h_{2}
+\tilde{\beta}^\ga w^{\ell_\infty}\pa_\xi^\vth\left\{(1-\chi_M)\mu^{-\frac{1}{2}} \CK h_{2}\right\},
\end{align}
and
\begin{align}
g_{2}(0,\xi)=0.\label{h2-id}
\end{align}
Next, we define the characteristic line $[s,V(s;t,\xi)]$ for equations \eqref{h1-eq} and \eqref{h2-eq} going through $(t,\xi)$ such that
\begin{align}\label{CL}
\left\{\begin{array}{rll}
&\frac{d V(s;t,\xi)}{ds}=-\frac{\tilde{\beta}'(s)}{\tilde{\beta}(s)} V(s;t,\xi)-\al A V(s;t,\xi),\\[2mm]
&V(t;t,\xi)=\xi,
\end{array}\right.
\end{align}
which can be solved as
\begin{equation}\label{V}
V(s):=V(s;t,\xi)=\frac{\tilde{\beta}(t)}{\tilde{\beta}(s)}e^{-(s-t)\al A}\xi.
\end{equation}
By this, we can write the solution of \eqref{h1-eq}, \eqref{h1-id}, \eqref{h2-eq} and \eqref{h2-id} as
\begin{align}
[w^{\ell_\infty}\pa_\xi^\vth g_{1},w^{\ell_\infty}\pa_\xi^\vth g_{2}]=\CQ_{\vth}(h_1,h_2)=[\CQ_{1,\vth}(h_1,h_2),\CQ_{2,\vth}(h_1,h_2)],\notag
\end{align}
where
\begin{align}
\CQ_{1,\vth}(h_1,h_2)=\sum\limits_{i=1}^{10}\CJ_{i},\ \CQ_{2,\vth}(h_1,h_2)=\sum\limits_{i=11}^{15}\CJ_{i},\
\label{Q12}
\end{align}
with
\begin{align}
\CJ_1=e^{-\int_0^t\tilde{\CA}(s)ds}w^{\ell_\infty}\pa_\xi^\vth G_{R,0}(V(0)),\notag
\end{align}
\begin{align}
\CJ_2= {\bf 1}_{|\vth|>0}\sum\limits_{|\vth'|=1}C_\vth^{\vth'}\int_0^te^{-\int_s^t\tilde{\CA}(\tau)d\tau}
\left\{\frac{\tilde{\beta}'}{\tilde{\beta}} w^\ell\pa_\xi^{\vth'}\xi\cdot\na_\xi\pa_\xi^{\vth-\vth'}h_1\right\}(s,V(s))ds,\notag
\end{align}
\begin{align}
\CJ_3=\al{\bf 1}_{|\vth|>0}\sum\limits_{|\vth'|=1}C_\vth^{\vth'}\int_0^te^{-\int_s^t\tilde{\CA}(\tau)d\tau}
\{w^{\ell_\infty}\pa_\xi^{\vth'}(A\xi)\cdot\na_\xi\pa^{\vth-\vth'}_\xi h_{1}\}(s,V(s))ds,\notag
\end{align}
\begin{align}
\CJ_4=-{\bf 1}_{|\vth|>0}\sum\limits_{0<\vth'\leq \vth}C_\vth^{\vth'}
\int_0^te^{-\int_s^t\tilde{\CA}(\tau)d\tau}\{\tilde{\beta}^{\ga} w^{\ell_\infty}\pa^{\vth'}_\xi\nu\pa^{\vth-\vth'}_{\xi}h_{1}\}(s,V(s))ds
,\notag
\end{align}
\begin{align}
\CJ_5=\sum\limits_{\vth'\leq\vth}C_\vth^{\vth'}\int_0^te^{-\int_s^t\tilde{\CA}(\tau)d\tau}\{\tilde{\beta}^{\ga} w^{\ell_\infty}\pa_\xi^{\vth'}(\chi_M \CK)\pa_\xi^{\vth-\vth'}h_1\}(s,V(s))ds
,\notag
\end{align}
\begin{align}
\CJ_6=-\int_0^t&e^{\int_s^t\tilde{\CA}(\tau)d\tau}\left\{\frac{\tilde{\beta}'}{2\tilde{\beta}}w^{\ell_\infty}\pa_{\xi}^\vth(|\xi|^2\sqrt{\mu}h_{2})
+\frac{\al}{2} w^{\ell_\infty}\pa_{\xi}^\vth(\xi\cdot A\xi\sqrt{\mu}h_{2})\right\}(s,V(s))ds,\notag
\end{align}
\begin{multline}
\CJ_7=-\int_0^te^{\int_s^t\tilde{\CA}(\tau)d\tau}\Big\{-\al^{1-m}\tilde{\beta}^{2\ga}w^{\ell_\infty}\pa_\xi^{\vth}\left\{\sqrt{\mu}\pa_t H_1\right\}
-\al^{2-m}\tilde{\beta}^{2\ga} w^{\ell_\infty}\pa_\xi^{\vth}\left\{\sqrt{\mu}\pa_t H_2\right\}
\notag\\
+\al^{3-m}\tilde{\beta}^{2\ga}\tilde{\beta}_1w^{\ell_\infty}\pa_\xi^{\vth}\left\{\na_\xi\cdot(\xi\mu)\right\}
+\al^{1-m}\tilde{\beta}^{2\ga-1}\tilde{\beta}' w^{\ell_\infty}\pa_\xi^{\vth}\left\{\na_\xi\cdot(\xi\sqrt{\mu}H_1)\right\}\Big\}(s,V(s))ds,\notag
\end{multline}
\begin{multline}
\CJ_8=-\int_0^te^{\int_s^t\tilde{\CA}(\tau)d\tau}\left\{\al^{2-m}\tilde{\beta}^{2\ga-1}\tilde{\beta}' w^{\ell_\infty}\pa_\xi^{\vth}
\left\{\na_\xi\cdot(\xi\sqrt{\mu}H_2)\right\}\right.\notag\\
\left.+\al^{3-m}\tilde{\beta}^{2\ga}w^{\ell_\infty}\pa_\xi^{\vth}\left\{\na_\xi\cdot(A\xi\sqrt{\mu}H_2)\right\}\right\}(s,V(s))ds,\notag
\end{multline}
\begin{align}
\CJ_{9}=\int_0^te^{-\int_s^t\tilde{\CA}(\tau)d\tau}\Big\{\al^{3-m}&\tilde{\beta}^{3\ga}  w^{\ell_\infty} \pa_\xi^{\vth}\{Q(\sqrt{\mu}H_1,\sqrt{\mu}H_2)+Q(\sqrt{\mu}H_2,\sqrt{\mu}H_1)\}
\notag\\&+\al^{4-m}\tilde{\beta}^{3\ga}  w^{\ell_\infty}\pa_\xi^{\vth}Q(\sqrt{\mu}H_2,\sqrt{\mu}H_2)\Big\}(s,V(s))ds,\notag
\end{align}
\begin{multline}
\CJ_{10}=\int_0^te^{-\int_s^t\tilde{\CA}(\tau)d\tau}\Big\{\al^m\tilde{\beta}^{-\ga} w^{\ell_\infty} \pa_\xi^{\vth}Q(\sqrt{\mu}h,\sqrt{\mu}h)
\notag\\
+\al \tilde{\beta}^{\ga}w^{\ell_\infty}\pa_\xi^{\vth}\{Q(\sqrt{\mu}H_1+\al \sqrt{\mu}H_2,\sqrt{\mu}h)
+Q(\sqrt{\mu}h,\sqrt{\mu}H_1+\al \sqrt{\mu}H_2)\}\Big\}(s,V(s))ds,\notag
\end{multline}
\begin{align}
\CJ_{11}= {\bf 1}_{|\vth|>0}\sum\limits_{|\vth'|=1}C_\vth^{\vth'}\int_0^te^{-\int_s^t\tilde{\CA}(\tau)d\tau}
\left\{\frac{\tilde{\beta}'}{\tilde{\beta}}w^{\ell_\infty}\pa^{\vth'}_\xi \xi\cdot\na_\xi\pa_\xi^{\vth-\vth'}h_{2}\right\}(s,V(s))ds,\notag
\end{align}
\begin{align}
\CJ_{12}=\al{\bf 1}_{|\vth|>0}\sum\limits_{|\vth'|=1}C_\vth^{\vth'}\int_0^te^{-\int_s^t\tilde{\CA}(\tau)d\tau}
\{w^{\ell_\infty}\pa_\xi^{\vth'}(A\xi)\cdot\na_\xi\pa^{\vth-\vth'}_\xi h_{2}\}(s,V(s))ds,\notag
\end{align}
\begin{align}
\CJ_{13}=-{\bf 1}_{|\vth|>0}\sum\limits_{0<\vth'\leq \vth}C_\vth^{\vth'}
\int_0^te^{-\int_s^t\tilde{\CA}(\tau)d\tau}\{\tilde{\beta}^\ga w^{\ell_\infty}\pa^{\vth'}_\xi\nu\pa^{\vth-\vth'}_{\xi}h_{2}\}(s,V(s))ds
,\notag
\end{align}
\begin{align}
\CJ_{14}=\int_0^te^{-\int_s^t\tilde{\CA}(\tau)d\tau}
\{\tilde{\beta}^\ga w^{\ell_\infty}\pa_\xi^{\vth}( K h_2)\}(s,V(s))ds
,\notag
\end{align}
and
\begin{align}
\CJ_{15}=\int_0^te^{-\int_s^t\tilde{\CA}(\tau)d\tau}
\{\tilde{\beta}^{\ga}w^{\ell_\infty}\pa_{\xi}^\vth((1-\chi_M)\mu^{-\frac{1}{2}}\CK h_1)\}(s,V(s))ds,\notag
\end{align}
where we have denoted
\begin{align}\label{A-def}
\tilde{\CA}(\tau,V(\tau))=&\tilde{\beta}^{\ga}\nu(V(\tau))-\frac{(2\ga+3)\tilde{\beta}'}{\tilde{\beta}}+2{\ell_\infty} \frac{\tilde{\beta}'}{\tilde{\beta}} \frac{|V(\tau)|^2}{1+|V(\tau)|^2} +2{\ell_\infty} \al \frac{V(\tau)\cdot (AV(\tau))}{{1+|V(\tau)|^2}}.
\end{align}
Before computing $\CJ_{i}$$(1\leq i\leq 15)$, we first prove the following estimates.

\begin{lemma}\label{H12-pro}
Let $[h_1,h_2]\in \FY_{\al,T}$ with $0\leq T\leq+\infty$. For any $\ell\geq0$, $k\in\Z^+$, and $N\in\Z^+$, it holds that
\begin{align}
\sum\limits_{|\vth|\leq N+1}\|w^{\ell}\pa_t^k\pa_\xi^\vartheta H_1(t,\xi)\|_{L^\infty}\leq C_1\al^{2k}\tilde{\beta}^{-\ga-k\ga},\label{H1-es-pr0}
\end{align}
and
\begin{align}
\sum\limits_{|\vth|\leq N}\|w^\ell\pa_t^k\pa_\xi^\vartheta H_2(t,\xi)\|_{L^\infty}\leq C_2\al^{2k}\tilde{\beta}^{-2\ga-k\ga},\label{H2-es-pr0}
\end{align}
where both $C_1$ and $C_2$ depend on $\ell, k$ and $N.$ Moreover, if $\tilde{\beta}(0)=1$, it holds that
\begin{align}\label{tbet-eq}
\tilde{\beta}^\ga(t)\sim 1+\ga\vho_0\al^2t,
\end{align}
for any $0\leq t\leq T.$

\end{lemma}
\begin{proof}
In light of \eqref{H1-exp} and ${\rm tr} A=0$, by using the same argument as for obtaining \eqref{H2-es} below, one
can show that for any $\vartheta\in \Z_+^3$ and $\ell\geq0$
\begin{align}
\|w^\ell\pa_\xi^\vartheta H_1(t,\xi)\|_{L^\infty}\leq C\tilde{\beta}^{-\ga},\label{H1-es}
\end{align}
where $C>0$ depends on $\vartheta$ and $\ell.$

For $H_2$, we intend to prove
\begin{align}
\sum\limits_{|\vth|\leq N}\|w^\ell\pa_\xi^\vartheta H_2(t,\xi)\|_{L^\infty}\leq
C\tilde{\beta}^{-2\ga}\label{H2-es}
\end{align}
for any $N\geq0.$ To do this, we first get from \eqref{H2-exp} that
\begin{align}
w^\ell\pa_\xi^\vartheta H_2=&\nu^{-1}w^\ell K\pa_\xi^\vartheta H_2+{\bf 1}_{\vth>0} \sum\limits_{0<\vth'+\vth''\leq \vth}C_\vth^{\vth',\vth''}w^\ell \pa_\xi^{\vth'}(\nu^{-1})(\pa_\xi^{\vth''}K)\pa_\xi^{\vartheta-\vth'-\vth''} H_2
\notag\\&+\sum\limits_{\vth'\leq \vth}C_\vth^{\vth'}w^\ell\pa_\xi^{\vth'}(\nu^{-1})\pa_\xi^{\vartheta-\vth'}\Ga(H_1,H_1)
\notag\\&+\sum\limits_{\vth'\leq \vth}C_\vth^{\vth'}
w^\ell\tilde{\beta}^{-\ga}\pa_\xi^{\vth'}(\nu^{-1})\pa_\xi^{\vartheta-\vth'}
\left\{\tilde{\beta}_0\na_\xi\cdot(\xi\mu)\mu^{-\frac{1}{2}}\right\}
\notag\\&+\sum\limits_{\vth'\leq \vth}C_\vth^{\vth'}
w^\ell\tilde{\beta}^{-\ga}\pa_\xi^{\vth'}(\nu^{-1})\pa_\xi^{\vartheta-\vth'}
\left\{\na_\xi\cdot(A\xi\sqrt{\mu}H_1)\mu^{-\frac{1}{2}}\right\}.\notag
\end{align}
By Lemma \ref{Ga}, \eqref{H1-es} and \eqref{be0h-def}, one further has
\begin{align}\label{H2-ho}
|w^\ell\pa_\xi^\vartheta H_2|\leq &C|w^\ell K\pa_\xi^\vartheta H_2|
+C{\bf 1}_{\vth>0}\sum\limits_{\vth'<\vth}\|w^\ell\pa_\xi^{\vth'} H_2\|_{L^\infty}+C\tilde{\beta}^{-2\ga}.
\end{align}
To handle the first term on the right hand side of \eqref{H2-ho},
we rewrite
\begin{align}
w^\ell K \pa_\xi^\vartheta H_2= \int_{\R^3}\Fk_w(\xi,\xi_\ast)w^\ell \pa_\xi^\vartheta H_2(t,\xi_\ast)d\xi_\ast:=\CI_0,\notag
\end{align}
and then divide our computations in the following three cases.

\medskip
\noindent\underline{{\it Case 1. $|\xi|\geq M$ with $M>0$ suitably large.}}
From Lemma \ref{Kop}, it follows that
$$
\int_{\R^3}\mathbf{k}_w(\xi,\xi_\ast)\,d\xi_\ast\leq \frac{C}{1+|\xi|}\leq \frac{C}{M}.
$$
Using this, one has
\begin{equation}\label{I01}
|\CI_{0}|\leq \int_{\R^3}\mathbf{k}_w(\xi,\xi_\ast)\,d\xi_\ast\|w^\ell \pa_\xi^\vartheta H_2\|_{L^\infty}\leq \frac{C}{M}\|\pa_\xi^\vartheta H_2\|_{L^\infty}.
\end{equation}

\medskip
\noindent\underline{{\it Case 2. $|\xi|\leq M$ and $|\xi_\ast|\geq 2M$.}} In this situation, we have
$|\xi-\xi_\ast|\geq M$, then
\begin{equation*}
\mathbf{k}_w(\xi,\xi_\ast)
\leq Ce^{-\frac{\vps M^2}{8}}\mathbf{k}_w(\xi,\xi_\ast)e^{\frac{\vps |\xi-\xi_\ast|^2}{8}}.
\end{equation*}
In light of Lemma \ref{Kop}, one sees that
$\int\mathbf{k}_w(\xi,\xi_\ast)e^{\frac{\vps |\xi-\xi_\ast|^2}{8}}\,d\xi_\ast$ is still bounded. Thus,
a similar computation as for obtaining \eqref{I01} yields
\begin{equation*}
\begin{split}
|\CI_{0}|\leq Ce^{-\frac{\vps M^2}{8}}\|w^\ell \pa_\xi^\vartheta H_2\|_{L^\infty}.
\end{split}
\end{equation*}

\noindent\underline{{\it Case 3. $|\xi|\leq M$ and $|\xi_\ast|\leq 2M$}}. In this case, we convert the bound in $L^\infty$-norm to the one in $L^2$-norm which will be established later on. To do so, for any large $M>0$,
we choose a number $p=p(M)$ to define
\begin{equation}\label{km}
\mathbf{k}_{w,p}(\xi,\xi_\ast)\equiv \mathbf{1}_{|\xi-\xi_\ast|\geq\frac{1}{p},|\xi_\ast|\leq p}\mathbf{k}_{w}(\xi,\xi_\ast),
\end{equation}
such that $\sup\limits_{\xi}\int_{\mathbf{R}^{3}}|\mathbf{k}_{w,p}(\xi,\xi_\ast)
-\mathbf{k}_{w}(\xi,\xi_\ast)|\,d\xi_\ast\leq
\frac{1}{M}.$ Then it follows
\begin{align*}
|\CI_{0}|&\leq C\int_{|\xi_\ast|\leq 2M}\mathbf{k}_{w,p}(\xi,\xi_\ast)|\pa_\xi^\vartheta H_2|d\xi_\ast+\frac{1}{M}\|w^\ell \pa_\xi^\vartheta H_2\|_{L^\infty}\\
&\leq C_{p,M}\|\pa_\xi^\vartheta H_2\|+\frac{1}{M}\|w^\ell \pa_\xi^\vartheta H_2\|_{L^\infty},
\end{align*}
according to H\"older's inequality and the fact that $\int_{\R^3}\mathbf{k}^2_{w,p}(\xi,\xi_\ast)d\xi_\ast<\infty.$
Putting the calculations above together, we arrive at
\begin{align}
|\CI_{0}|&\leq C\|\pa_\xi^\vartheta H_2\|
+\left\{\frac{C}{M}+Ce^{-\frac{\vps M^2}{8}}\right\}\|w^\ell \pa_\xi^\vartheta H_2\|_{L^\infty}.\label{I0-es}
\end{align}
We now turn to deduce the $L^2$ estimate for $H_2$. In view of \eqref{H2-exp}, one has
\begin{align}\label{d-H2}
-\tilde{\beta}^{-\ga}\tilde{\beta}_0\pa_\xi^\vartheta&\left\{\na_\xi\cdot(\xi\mu)\mu^{-\frac{1}{2}}\right\}
-\pa_\xi^\vartheta\left\{\tilde{\beta}^{-\ga}\na_\xi\cdot(A\xi\sqrt{\mu}H_1)\mu^{-\frac{1}{2}}\right\}+\pa_\xi^\vth L H_2
=\pa_\xi^\vartheta\Ga(H_1,H_1).
\end{align}
Taking the inner product of \eqref{d-H2} with $\pa_\xi^\vth H_2$  and applying \eqref{H1-es} and Lemma \ref{Ga} as well as Lemma \ref{es-L}, one has, if $\vth=0$
\begin{align}
\|H_2\|^2_\nu\leq C\tilde{\beta}^{-\ga}\tilde{\beta}_0\leq C\tilde{\beta}^{-2\ga},\label{H2-l2-p1}
\end{align}
and if $\vth>0$
\begin{align}
\|\pa_\xi^\vth H_2\|^2_\nu\leq C\|H_2\|^2+\tilde{\beta}^{-2\ga}.\label{H2-l2-p2}
\end{align}
We now have by putting \eqref{H2-l2-p1} and \eqref{H2-l2-p2} into \eqref{I0-es} that
\begin{align}\notag
|\CI_{0}|&\leq C\tilde{\beta}^{-2\ga}
+\left\{\frac{C}{M}+Ce^{-\frac{\vps M^2}{8}}\right\}\|w^\ell\pa_\xi^\vartheta H_2\|_{L^\infty},
\end{align}
which together with \eqref{H2-ho} gives
\begin{align}\label{H2-ho-p2}
|w^\ell\pa_\xi^\vartheta H_2|\leq
C{\bf 1}_{\vth>0}\sum\limits_{\vth'<\vth}\|w^\ell \pa_\xi^{\vth'} H_2\|_{L^\infty}+C\tilde{\beta}^{-2\ga}.
\end{align}
Consequently, \eqref{H2-es} follows from a linear combination of \eqref{H2-ho-p2} over $|\vth|=0,1,2,\cdots,N.$
Once \eqref{H1-es} and \eqref{H2-es} are obtained, we now turn to determine $\tilde{\beta}$.
Since $[h_1,h_2]\in\FY_{\al,T}$, from \eqref{h-def}, it follows
\begin{align}\label{H-decay}
\left\|w^{\ell}\pa_\xi^\vth H_{R,1}(t)\right\|_{L^\infty}
+\left\|w^{\ell}\pa_\xi^\vth  H_{R,2}(t)\right\|_{L^\infty}\leq 2M_0\tilde{\beta}^{-2\ga}(t),
\end{align}
for any $0\leq t\leq T.$
Therefore,
using\eqref{be1h-def}, \eqref{H2-es} and \eqref{H-decay} and taking $\al$ to be suitably small, we get
\begin{align}
\al|\tilde{\beta}_1|\leq\frac{1}{2}\vho_0\tilde{\beta}^{-\ga},\label{bet1-es}
\end{align}
where we also have used the fact that $\tilde{\beta}^{-2\ga}\leq C(T)\tilde{\beta}^{-\ga}.$

Next plugging \eqref{bet1-es} and \eqref{be0h-def} into \eqref{beh-exp}, one has
\begin{align}
\frac{1}{2}\vho_0\tilde{\beta}^{-\ga}\al^2\leq \frac{\tilde{\beta}'}{\tilde{\beta}}\leq \frac{3}{2}\vho_0\tilde{\beta}^{-\ga}\al^2,\notag
\end{align}
which further gives \eqref{tbet-eq}.

We now turn to prove \eqref{H1-es-pr0} and \eqref{H2-es-pr0} involving $t-$derivatives. As a matter of fact, since $\tilde{\beta}(t)$ is given as \eqref{tbet-eq}, we see that
\begin{align}
\left|\frac{d^k}{dt^k}(\tilde{\beta}^{-\ga})\right|\leq C\al^{2k}\tilde{\beta}^{-\ga-k\ga}.\notag
\end{align}
On the other hand, it follows from \eqref{H1-exp} and \eqref{H2-exp} that
\begin{align}
w^\ell\pa_t^k\pa_\xi^\vartheta H_1
=\frac{d^k}{dt^k}(\tilde{\beta}^{-\ga})w^\ell\pa_\xi^\vartheta L^{-1}\{\xi\cdot A\xi\mu^{\frac{1}{2}}\},\notag
\end{align}
and
\begin{align}
w^\ell\pa_t^k\pa_\xi^\vartheta H_2=&\nu^{-1}w^\ell K\pa_t^k\pa_\xi^\vartheta H_2+{\bf 1}_{\vth>0} \sum\limits_{0<\vth'+\vth''\leq \vth}C_\vth^{\vth',\vth''}w^\ell \pa_\xi^{\vth'}(\nu^{-1})(\pa_\xi^{\vth''}K)\pa_t^k\pa_\xi^{\vartheta-\vth'-\vth''} H_2
\notag\\&+\sum\limits_{\vth'\leq \vth}C_\vth^{\vth'}\sum\limits_{k'\leq k}C_k^{k'}w^\ell\pa_\xi^{\vth'}(\nu^{-1})\pa_\xi^{\vartheta-\vth'}
\Ga(\pa_t^{k'}H_1,\pa_t^{k-k'}H_1)
\notag\\&+\sum\limits_{\vth'\leq \vth}C_\vth^{\vth'}
w^\ell\frac{d^k}{dt^k}\left(\tilde{\beta}^{-\ga}\tilde{\beta}_0\right)\pa_\xi^{\vth'}(\nu^{-1})\pa_\xi^{\vartheta-\vth'}
\left\{\na_\xi\cdot(\xi\mu)\mu^{-\frac{1}{2}}\right\}
\notag\\&+\sum\limits_{\vth'\leq \vth}C_\vth^{\vth'}\sum\limits_{k'\leq k}C_k^{k'}
w^\ell\frac{d^{k'}}{dt^{k'}}\left(\tilde{\beta}^{-\ga}\right)\pa_\xi^{\vth'}(\nu^{-1})\pa_\xi^{\vartheta-\vth'}
\left\{\na_\xi\cdot(A\xi\sqrt{\mu}\pa_t^{k-k'}H_1)\mu^{-\frac{1}{2}}\right\}.\notag
\end{align}
Thus we can perform similar calculations as for obtaining \eqref{H1-es} and \eqref{H2-es} and then see that
both \eqref{H1-es-pr0} and \eqref{H2-es-pr0} with $k>0$ also hold true. This ends the proof of Lemma \ref{H12-pro}.
\end{proof}

With Lemma \ref{H12-pro} in our hands, we now turn to estimate  $\CJ_{i}$$(1\leq i\leq 16)$ term by term. First of all, by taking ${\ell_\infty}\al\ll1$, we get that
\begin{align}
\tilde{\CA}(\tau,V(\tau))\geq \frac{1}{2}\nu\tilde{\beta}^\ga(\tau)\geq c_0\tilde{\beta}^\ga(\tau),\notag
\end{align}
for some $c_0>0$. Thus it follows
\begin{align}
\int_0^te^{-\int_s^t\tilde{\CA}(\tau)d\tau}(\nu\tilde{\beta}^{\ga})(s)ds\leq 2\left(1-e^{-\frac{1}{2}\int_0^t(\nu\tilde{\beta}^{\ga})(\tau)d\tau}\right)
\leq 2.\label{CA-bd}
\end{align}
Moreover, one also has
\begin{align}
\int_0^te^{-\int_s^t\tilde{\CA}(\tau)d\tau}(\tilde{\beta}^{\ga})(s)ds\leq 2\left(1-e^{-c_0\int_0^t\tilde{\beta}^{\ga}(\tau)d\tau}\right)
\leq \int_0^t\tilde{\beta}^{\ga}(\tau)d\tau\leq t\tilde{\beta}^{\ga}(t)\leq 2T_0,\label{CA-bd-p2}
\end{align}
provided that $t\in[0,T_0]$ and $T_0$ is suitably small.

It is straightforward to see
\begin{align}
|\CJ_1|\leq \|w^{\ell_\infty}\pa_\xi^\vth G_{R,0}(V(0))\|_{L^\infty}\leq M_0.\notag
\end{align}
Since
\begin{align}
|\frac{\tilde{\beta}'}{\tilde{\beta}}|\leq C\al^2\label{bet-de-sec}
\end{align}
according to \eqref{tbet-eq}, one has
\begin{align}
|\CJ_2|\leq C\al^2\tilde{\beta}^{-\ga}\sum\limits_{\vth'\leq\vth}\|w^{\ell_\infty}\pa_\xi^{\vth'} h_1\|_{L^\infty}\leq C\al^2M_0,
\ \ |\CJ_{11}|\leq C\al^2\tilde{\beta}^{-\ga}\sum\limits_{\vth'\leq\vth}\|w^{\ell_\infty}\pa_\xi^{\vth'} h_2\|_{L^\infty}\leq C\al^2M_0.\notag
\end{align}
For $\CJ_3$ and $\CJ_{12}$, it follows from \eqref{CA-bd} that
\begin{align}
|\CJ_3|\leq C\al\tilde{\beta}^{-\ga}\sum\limits_{\vth'\leq\vth}\|w^{\ell_\infty}\pa_\xi^{\vth'} h_1\|_{L^\infty}\leq C\al M_0
,\ \ |\CJ_{12}|\leq C\al\tilde{\beta}^{-\ga}\sum\limits_{\vth'\leq\vth}\|w^{\ell_\infty}\pa_\xi^{\vth'} h_2\|_{L^\infty}\leq C\al M_0.\notag
\end{align}
Similarly, for $\CJ_4$ and $\CJ_{13}$, one has
\begin{align}
|\CJ_4|\leq CT_0\sum\limits_{\vth'<\vth}\|w^{\ell_\infty}\pa_\xi^{\vth'} h_1\|_{L^\infty}\leq CT_0 M_0
,\ \ |\CJ_{13}|\leq CT_0\sum\limits_{\vth'<\vth}\|w^{\ell_\infty}\pa_\xi^{\vth'} h_2\|_{L^\infty}\leq CT_0 M_0.\notag
\end{align}
For $\CJ_5$, Lemma \ref{g-ck-lem} and \eqref{CA-bd} give
\begin{align}
|\CJ_5|\leq C(M^{-\ga}+\varsigma)\sum\limits_{\vth'\leq\vth}\|w^{\ell_\infty}\pa_\xi^{\vth'} h_1\|_{L^\infty}\leq CM_0(M^{-\ga}+\varsigma).\notag
\end{align}
For $\CJ_6$, by virtue of \eqref{bet-de-sec}, \eqref{CA-bd}, we get
\begin{align}
|\CJ_6|\leq C\al\sum\limits_{\vth'\leq\vth}\|w^{\ell_\infty}\pa_\xi^{\vth'} h_2\|_{L^\infty}\leq C\al M_0,\notag
\end{align}
and
\begin{align}
|\CJ_{15}|\leq C\al\sum\limits_{\vth'\leq\vth}\|w^{\ell_\infty}\pa_\xi^{\vth'} h_2\|_{L^\infty}\leq C\al M_0.\notag
\end{align}
For $\CJ_7$ and $\CJ_8$, Lemma \ref{H12-pro}, \eqref{bet-de-sec} and \eqref{CA-bd} imply
\begin{align}
|\CJ_7|+|\CJ_8|\leq C\al^{3-m}+ C\al\sum\limits_{\vth'\leq\vth}\|w^{\ell_\infty}\pa_\xi^{\vth'}[h_1, h_2]\|_{L^\infty}
\leq C\al^{3-m}+ C\al M_0.\notag
\end{align}
For the rest non-local terms, from Lemmas \ref{op.es.lem}, \ref{Kop} and \ref{H12-pro} as well as \eqref{CA-bd} and \eqref{CA-bd-p2}, one has
\begin{align}
|\CJ_9|\leq C\al^{3-m}\sum\limits_{\vth'\leq\vth}\|\tilde{\beta}^\ga w^{\ell_\infty}\pa_\xi^{\vth'} [H_1,H_2]\|_{L^\infty}\leq C\al^{3-m},\notag
\end{align}
\begin{align}
|\CJ_{10}|\leq C\al^m\sum\limits_{\vth'\leq\vth}\|w^{\ell_\infty}\pa_\xi^{\vth'} [h_1,h_2]\|^2_{L^\infty}
+ C\al\sum\limits_{\vth'\leq\vth}\|w^{\ell_\infty}\pa_\xi^{\vth'} [h_1,h_2]\|_{L^\infty}\leq C\al M_0,\notag
\end{align}
\begin{align}
|\CJ_{14}|\leq CT_0\sum\limits_{\vth'\leq\vth}\|w^{\ell_\infty}\pa_\xi^{\vth'}h_2\|_{L^\infty}\leq CT_0 M_0,\ \
|\CJ_{15}|\leq CT_0\sum\limits_{\vth'\leq\vth}\|w^{\ell_\infty}\pa_\xi^{\vth'} h_1\|_{L^\infty}\leq CT_0 M_0.\notag
\end{align}
Putting the above estimates together, we arrive at
\begin{align}\label{g1-g2-if-sum}
\sum\limits_{|\vth|\leq N}&\left\{\|w^{\ell_\infty}\pa_\xi^{\vth} g_1\|_{L^\infty}+\|w^{\ell_\infty}\pa_\xi^{\vth} g_2\|_{L^\infty}\right\}
\notag\\&\leq M_0+CN\al M_0+CNT_0 M_0+CN\al^{3-m}+CN\al^2+CN M_0(M^{-\ga}+\varsigma).
\end{align}
Choosing now both $\al$, $\varsigma$ and $T_0$ to be suitably small and letting $M$ be sufficiently large such that
\begin{align}
\si_0=\max\left\{CN\al,CNT_0,CN(M^{-\ga}+\varsigma),\frac{CN\al^{3-m}}{ M_0},\frac{CN\al^2}{ M_0}\right\}\leq\frac{1}{16},\notag
\end{align}
we get from  \eqref{g1-g2-if-sum} that
\begin{align}
\sum\limits_{|\vth|\leq N}&\sup\limits_{0\leq t\leq T_0}\left\{\|w^{\ell_\infty}\pa_\xi^{\vth} g_1(t)\|_{L^\infty}+\al\|w^{\ell_\infty}\pa_\xi^{\vth} g_2(t)\|_{L^\infty}\right\}
\leq 2 M_0.\notag
\end{align}
Next, for $[h_1,h_2]\in\FY_{\al,T}$ and
$[\bar{h}_1,\bar{h}_2]\in\FY_{\al,T}$, we aim to prove that
\begin{multline}
\sum\limits_{|\vth|\leq N-1}\sup\limits_{0\leq t\leq T_0}\left\{\|[\CQ_{1}(h_1,h_2)-\CQ_{1}(\bar{h}_1,\bar{h}_2)]\|_{L^\infty}
+\|[\CQ_{2}(h_1,h_2)-\CQ_{1}(\bar{h}_1,\bar{h}_2)]\|_{L^\infty}\right\}\\
\leq 
C\{T_0+\al+M^{-\ga}+\varsigma\} \sum\limits_{|\vth|\leq N-1}\sup\limits_{0\leq t\leq T_0}\left\{\|w^{\ell_\infty}\pa_\xi^{\vth} [h_1-\bar{h}_1]\|_{L^\infty}+\|w^{\ell_\infty}\pa_\xi^{\vth} [h_2-\bar{h}_2]\|_{L^\infty}\right\}.\label{CQ-c}
\end{multline}
To do this, as in the proof of Lemma \ref{H12-pro}, we first denote
\begin{align}\notag
\sqrt{\mu}\bar{h}=\bar{h}_1+\sqrt{\mu}\bar{h}_2=\bar{\beta}^{2\ga}(\bar{H}_{R,1}+\sqrt{\mu}\bar{H}_{R,2}),\ \sqrt{\mu}\bar{H}_{R}=\bar{H}_{R,1}+\sqrt{\mu}\bar{H}_{R,2},
\end{align}
\begin{align}
\bar{H}=\mu+\sqrt{\mu}\{\al \bar{H}_1+\al^2 \bar{H}_2+\al^m \bar{H}_R\},\notag
\end{align}
\begin{align}\notag
\bar{H}_1=-\bar{\beta}^{-\ga}L^{-1}\{\xi\cdot A\xi\mu^{\frac{1}{2}}\},
\end{align}
\begin{align}\notag
\bar{H}_2=&\bar{\beta}^{-\ga}L^{-1}\left\{\bar{\beta}^\ga\Ga(\bar{H}_1,\bar{H}_1)+\bar{\beta}_0\na_\xi\cdot(\xi\mu)\mu^{-\frac{1}{2}}
+\na_\xi\cdot(A\xi\sqrt{\mu}\bar{H}_1)\mu^{-\frac{1}{2}}\right\},
\end{align}
\begin{align}
\frac{\bar{\beta}'}{\bar{\beta}}=-\frac{\al}{3}\int_{\R^3}\xi\cdot A\xi \bar{H}d\xi,\ \bar{\beta}'=\frac{d\bar{\beta}}{dt},\ \bar{\beta}(0)=1,\label{bet-bH}
\end{align}
\begin{align}\notag
\frac{\bar{\beta}'}{\bar{\beta}}=\bar{\beta}_0\al^2+\bar{\beta}_1\al^3,
\end{align}
\begin{align}\notag
\bar{\beta}_0=-\frac{1}{3}\int_{\R^3}\xi\cdot A\xi \sqrt{\mu} \bar{H}_1d\xi=\vho_0\bar{\beta}^{-\ga},
\end{align}
\begin{align}\label{be1bh-def}
\bar{\beta}_1=-\frac{1}{3}\int_{\R^3}\xi\cdot A\xi \sqrt{\mu}\bar{H}_2d\xi-\frac{\al^{m-2}}{3}\int_{\R^3}\xi\cdot A\xi \sqrt{\mu}\bar{H}_Rd\xi
=\vho_1\bar{\beta}^{-2\ga}+\bar{\vho}_R\bar{\beta}^{-2\ga},
\end{align}
with
\begin{align}
\bar{\vho}_R=-\frac{\al^{m-2}\bar{\beta}^{2\ga}}{3}\int_{\R^3}\xi\cdot A\xi \sqrt{\mu}\bar{H}_Rd\xi,\notag
\end{align}
and similar to \eqref{tbeta-ga}
\begin{align}
\frac{d\bar{\beta}^{\ga}}{dt}=\ga\left\{\vho_0\al^2+\vho_1\al^3\bar{\beta}^{-\ga}+\bar{\vho}_R\al^3\bar{\beta}^{-\ga}\right\}.\label{bbeta-ga}
\end{align}
Moreover, $[\bar{H}_1,\bar{H}_2,\bar{\beta}]$ also satisfies Lemma \ref{H12-pro}. Namely, we have the following lemma.

\begin{lemma}\label{bH12-pro}
Let $[\bar{h}_1,\bar{h}_2]\in \FY_{\al,T}$ with $0\leq T\leq+\infty$. For any $\ell\geq0$, $k\in\Z^+$, and $N\in\Z^+$, it holds that
\begin{align}\notag
\sum\limits_{|\vth|\leq N+1}\|w^\ell\pa_t^k\pa_\xi^\vartheta \bar{H}_1(t,\xi)\|_{L^\infty}\leq C_1\al^{2k}\bar{\beta}^{-\ga-k\ga},
\end{align}
and
\begin{align}\notag
\sum\limits_{|\vth|\leq N}\|w^\ell\pa_t^k\pa_\xi^\vartheta \bar{H}_2(t,\xi)\|_{L^\infty}\leq C_2\al^{2k}\bar{\beta}^{-2\ga-k\ga},
\end{align}
where both $C_1$ and $C_2$ depend on $\ell, k$ and $N.$ Moreover, if $\bar{\beta}(0)=1$, it holds that
\begin{align}\label{bbet-eq}
\bar{\beta}^\ga(t)\sim 1+\ga\vho_0\al^2t,
\end{align}
for any $0\leq t\leq T.$

\end{lemma}
Using Lemmas \ref{H12-pro} and \ref{bH12-pro}, given $[\bar{h}_1,\bar{h}_2]\in \FY_{\al,T_0}$ one gets from ODE equations
\eqref{tbeta-ga} and \eqref{bbeta-ga} that
\begin{align}
|(\tilde{\beta}^\ga-\bar{\beta}^\ga)(t)|\leq C(T_0)\al |\vho_R-\bar{\vho}_R|\leq C(T_0)\al \|w^{\ell_\infty}[h_1-\bar{h}_1,h_2-\bar{h}_2](t)\|_{L^\infty},\label{diff-bet}
\end{align}
for $\ell_\infty>5/2$. This together with \eqref{bet-H} and \eqref{bet-bH} further gives
\begin{align}
|(\frac{\tilde{\beta}'}{\tilde{\beta}}-\frac{\bar{\beta}'}{\bar{\beta}})(t)|\leq C(T_0)\al \|w^{\ell_\infty}[h_1-\bar{h}_1,h_2-\bar{h}_2](t)\|_{L^\infty}.
\label{diff-bet-d}
\end{align}
In addition, \eqref{be1h-def} and \eqref{be1bh-def} directly imply
\begin{align}
|(\tilde{\beta}^{2\ga}\tilde{\beta}_1-\bar{\beta}^{2\ga}\bar{\beta}_1)(t)|\leq C(T_0)\al |\vho_R-\bar{\vho}_R|\leq C(T_0)\al \|w^{\ell_\infty}[h_1-\bar{h}_1,h_2-\bar{h}_2](t)\|_{L^\infty}.\notag
\end{align}
Next, as in \eqref{A-def}, we define
\begin{align}
\bar{\CA}(\tau,\bar{V}(\tau))=&\bar{\beta}^{\ga}\nu(\bar{V}(\tau))-\frac{(2\ga+3)\bar{\beta}'}{\bar{\beta}}+2{\ell_\infty} \frac{\bar{\beta}'}{\bar{\beta}} \frac{|\bar{V}(\tau)|^2}{1+|\bar{V}(\tau)|^2} +2{\ell_\infty} \al \frac{\bar{V}(\tau)\cdot (A\bar{V}(\tau))}{{1+|\bar{V}(\tau)|^2}},\notag
\end{align}
where $\bar{V}$ is given by
\begin{equation}\label{bV}
\bar{V}(s)=\bar{V}(s;t,\xi)=\frac{\bar{\beta}(t)}{\bar{\beta}(s)}e^{-(s-t)\al A}\xi,
\end{equation}
according to \eqref{CL}. Note that $\bar{V}(0)=V(0)$, since $\tilde{\beta}(0)=\bar{\beta}(0).$ Furthermore, for $0\leq s\leq t\leq T_0$, it follows from \eqref{bV}, \eqref{V} and \eqref{diff-bet} that
\begin{align}\label{VV}
|\bar{V}(s)-V(s)|\leq C(T_0)\al\|w^{\ell_\infty}[h_1-\bar{h}_1,h_2-\bar{h}_2](t)\|_{L^\infty}|\bar{V}(s)|.
\end{align}
Next, in view of \eqref{tbet-eq}, \eqref{bbet-eq} and \eqref{diff-bet}, one has
\begin{align}\label{AA-lb}
|\tilde{\CA}(\tau,V(\tau))-\bar{\CA}(\tau,\bar{V}(\tau))|
\leq& C(T_0)\al \|w^{\ell_\infty}[h_1-\bar{h}_1,h_2-\bar{h}_2](t)\|_{L^\infty}\nu(\bar{V}(\tau)),
\end{align}
where the following fact has been used
\begin{align}
|\nu(\bar{V}(\tau))-\nu(V(\tau))|
\leq  C(T_0)\al \|w^{\ell_\infty}[h_1-\bar{h}_1,h_2-\bar{h}_2](t)\|_{L^\infty}\nu(\bar{V}(\tau))\notag
\end{align}
by mean value theorem. Consequently, by choosing both $\al$ and $\vps_0$ are suitably small, one gets
\begin{align}\label{VV-eq}
\frac{1}{C}|\bar{V}(s)|\leq |V(s)|\leq C|\bar{V}(s)|,\ \frac{1}{C}\nu(\bar{V}(s))\leq \nu(V(s))\leq C\nu(\bar{V}(s)),
\end{align}
and
\begin{align}\label{AA-eq}
\tilde{\CA}(s,V(s))\geq c_0\nu(\bar{V}(\tau)),\ \ \bar{\CA}(s,\bar{V}(s))\geq c_0\nu(\bar{V}(\tau)),
\end{align}
for $0\leq s\leq T_0$ and some $C>0$ as well as $c_0>0$.

Recalling \eqref{Q12}, we have
\begin{align*}
|\CQ_{1,\vth}(h_1,h_2)-\CQ_{1,\vth}(\bar{h}_1,\bar{h}_2)|\leq \sum\limits_{1\leq i\leq 10}|\CJ_i-\bar{\CJ}_i|,\ \
|\CQ_{2,\vth}(h_1,h_2)-\CQ_{2,\vth}(\bar{h}_1,\bar{h}_2)|\leq \sum\limits_{11\leq i\leq 15}|\CJ_i-\bar{\CJ}_i|,
\end{align*}
where $\bar{\CJ}_i$ $(1\leq i\leq15)$ is in the form of $\CJ_i$ with $[h_1,h_2,\tilde{\beta}]$ replaced by $[\bar{h}_1,\bar{h}_2,\bar{\beta}]$.

To prove \eqref{CQ-c}, we now turn to compute $\CJ_i-\bar{\CJ}_i$ $(1\leq i\leq15)$ term by term.
First of all, by mean value theorem, we have
\begin{align}
\left|e^{-\int_0^t\tilde{\CA}(s)ds}-e^{-\int_0^t\bar{\CA}(s)ds}\right|\leq e^{-\Theta}\int_0^t|\tilde{\CA}(s)-\bar{\CA}(s)|ds,\notag
\end{align}
where we have taken $\Theta$ such that $\min\{\int_0^t\tilde{\CA}(s)ds,\int_0^t\bar{\CA}(s)ds\}\leq \Theta\leq \max\{\int_0^t\tilde{\CA}(s)ds,\int_0^t\bar{\CA}(s)ds\}$. Then, 
 \eqref{AA-lb} and \eqref{AA-eq} further give
\begin{align}
&\left|e^{-\int_0^t\tilde{\CA}(s)ds}-e^{-\int_0^t\bar{\CA}(s)ds}\right|\notag\\
&\leq
C(T_0) \al e^{-c_0\int_0^t\nu(\bar{V}(\tau))d\tau}\int_0^t\nu(\bar{V}(\tau))d\tau
\sup\limits_{0\leq t\leq T_0}\|w^{\ell_\infty}[h_1-\bar{h}_1,h_2-\bar{h}_2](t)\|_{L^\infty}\notag\\
&\leq C(T_0)\al \sup\limits_{0\leq t\leq T_0}\|w^{\ell_\infty}[h_1-\bar{h}_1,h_2-\bar{h}_2](t)\|_{L^\infty}.\label{d-AA}
\end{align}
Since $\bar{V}(0)=V(0)$, one has by \eqref{d-AA}
\begin{align}
|\CJ_1-\bar{\CJ}_1|=&\left|e^{-\int_0^t\tilde{\CA}(s)ds}-e^{-\int_0^t\bar{\CA}(s)ds}\right||w^{\ell_\infty}\pa_\xi^\vth G_{R,0}(V(0))|\notag\\
\leq&C(T_0) M_0\al\sup\limits_{0\leq t\leq T_0}\|w^{\ell_\infty}[h_1-\bar{h}_1,h_2-\bar{h}_2](t)\|_{L^\infty}.\notag
\end{align}
For $\CJ_2-\bar{\CJ}_2$, we first write
\begin{align}
\CJ_2-\bar{\CJ}_2=& {\bf 1}_{|\vth|>0}\sum\limits_{|\vth'|=1}C_\vth^{\vth'}\int_0^te^{-\int_s^t\tilde{\CA}(\tau)d\tau}
\left\{\frac{\tilde{\beta}'}{\tilde{\beta}}
w^{\ell_\infty}\pa_\xi^{\vth'}\xi\cdot\na_\xi\pa_\xi^{\vth-\vth'}h_1\right\}(s,V(s))ds\notag\\
&-{\bf 1}_{|\vth|>0}C_\vth^{\vth'}\sum\limits_{|\vth'|=1}\int_0^te^{-\int_s^t\bar{\CA}(\tau)d\tau}
\left\{\frac{\bar{\beta}'}{\bar{\beta}}
w^{\ell_\infty}\pa_\xi^{\vth'}\xi\cdot\na_\xi\pa_\xi^{\vth-\vth'}\bar{h}_1\right\}(s,\bar{V}(s))ds\notag\\
=& {\bf 1}_{|\vth|>0}\sum\limits_{|\vth'|=1}C_\vth^{\vth'}\left\{\int_0^te^{-\int_s^t\tilde{\CA}(\tau)d\tau}
-\int_0^te^{-\int_s^t\bar{\CA}(\tau)d\tau}\right\}
\left\{\frac{\tilde{\beta}'}{\tilde{\beta}} w^{\ell_\infty}\pa_\xi^{\vth'}\xi\cdot\na_\xi\pa_\xi^{\vth-\vth'}h_1\right\}(s,V(s))ds\notag\\
&+{\bf 1}_{|\vth|>0}\sum\limits_{|\vth'|=1}C_\vth^{\vth'}\int_0^te^{-\int_s^t\bar{\CA}(\tau)d\tau}
\left\{\frac{\tilde{\beta}'}{\tilde{\beta}} -\frac{\bar{\beta}'}{\bar{\beta}}\right\}\left\{w^{\ell_\infty}\pa_\xi^{\vth'}\xi\cdot\na_\xi\pa_\xi^{\vth-\vth'}h_1\right\}(s,V(s))ds\notag\\
&+{\bf 1}_{|\vth|>0}\sum\limits_{|\vth'|=1}C_\vth^{\vth'}\int_0^te^{-\int_s^t\bar{\CA}(\tau)d\tau}
\frac{\bar{\beta}'}{\bar{\beta}} \bigg\{\left\{w^{\ell_\infty}\pa_\xi^{\vth'}\xi\cdot\na_\xi\pa_\xi^{\vth-\vth'}h_1\right\}(s,V(s))\notag\\&\qquad\qquad\qquad\qquad-
\left\{w^{\ell_\infty}\pa_\xi^{\vth'}\xi\cdot\na_\xi\pa_\xi^{\vth-\vth'}\bar{h}_1\right\}(s,\bar{V}(s))\bigg\}ds.\label{CJ-22}
\end{align}
On the other hand, it follows
\begin{align}
\{w^{\ell_\infty}&\pa_\xi^{\vth'}\xi\cdot\na_\xi\pa_\xi^{\vth-\vth'}h_1\}(s,V(s))-
\{w^{\ell_\infty}\pa_\xi^{\vth'}\xi\cdot\na_\xi\pa_\xi^{\vth-\vth'}\bar{h}_1\}(s,\bar{V}(s))\notag\\
=&\{w^{\ell_\infty}\pa_\xi^{\vth'}\xi\cdot\na_\xi\pa_\xi^{\vth-\vth'}h_1\}(s,V(s))-
\{w^{\ell_\infty}\pa_\xi^{\vth'}\xi\cdot\na_\xi\pa_\xi^{\vth-\vth'}h_1\}(s,\bar{V}(s))
\notag\\&+\{w^{\ell_\infty}\pa_\xi^{\vth'}\xi\cdot\na_\xi\pa_\xi^{\vth-\vth'}h_1\}(s,\bar{V}(s))-
\{w^{\ell_\infty}\pa_\xi^{\vth'}\xi\cdot\na_\xi\pa_\xi^{\vth-\vth'}\bar{h}_1\}(s,\bar{V}(s))\notag\\
=&\left\{\na_\xi\{w^{\ell_\infty}\pa_\xi^{\vth'}\xi\cdot\na_\xi\pa_\xi^{\vth-\vth'}h_1\}\right\}(s,\Theta_1)\cdot(V(s)-\bar{V}(s))
\notag\\&+\{w^{\ell_\infty}\pa_\xi^{\vth'}\xi\cdot\na_\xi\pa_\xi^{\vth-\vth'}h_1\}(s,\bar{V}(s))-
\{w^{\ell_\infty}\pa_\xi^{\vth'}\xi\cdot\na_\xi\pa_\xi^{\vth-\vth'}\bar{h}_1\}(s,\bar{V}(s)),\notag
\end{align}
where $\Theta_1=\bar{V}(s)+\sigma_1(V(s)-\bar{V}(s))$ with $\si_1\in[0,1]$. Since $[h_1,h_2]\in\FY_{\al,T_0}$ and $|\vth|\leq N-1$ here, one has by \eqref{VV} that
\begin{align}
\left|\left\{\na_\xi\{w^{\ell_\infty}\pa_\xi^{\vth'}\xi\cdot\na_\xi\pa_\xi^{\vth-\vth'}h_1\}\right\}(s,\Theta_1)\cdot(V(s)-\bar{V}(s))\right|
\leq C(T_0) M_0\al\|w^{\ell_\infty}[h_1-\bar{h}_1,h_2-\bar{h}_2](t)\|_{L^\infty}.\notag
\end{align}
Consequently, we get
\begin{align}
\Big|\{w^{\ell_\infty}&\pa_\xi^{\vth'}\xi\cdot\na_\xi\pa_\xi^{\vth-\vth'}h_1\}(s,V(s))-
\{w^{\ell_\infty}\pa_\xi^{\vth'}\xi\cdot\na_\xi\pa_\xi^{\vth-\vth'}\bar{h}_1\}(s,\bar{V}(s))\Big|\notag\\
\leq& C\sum\limits_{\vth'\leq\vth}\|w^{\ell_\infty}\pa_\xi^{\vth'}[h_1-\bar{h}_1,h_2-\bar{h}_2](t)\|_{L^\infty}.\label{h-var}
\end{align}
Therefore, \eqref{diff-bet-d}, \eqref{d-AA}, \eqref{h-var} and \eqref{CJ-22} imply 
\begin{align}
|\CJ_2-\bar{\CJ}_2|\leq C\al\sum\limits_{\vth'\leq\vth}\|w^{\ell_\infty}\pa_\xi^{\vth'}[h_1-\bar{h}_1,h_2-\bar{h}_2](t)\|_{L^\infty}.\notag
\end{align}
In what follows, we only compute the nonlocal terms $\CJ_5-\bar{\CJ}_5$ and $\CJ_{10}-\bar{\CJ}_{10}$, since the other terms can be treated similarly. For $\CJ_5-\bar{\CJ}_5$, as \eqref{CJ-22}, we first write
\begin{align}
\CJ_5-\bar{\CJ}_5=&\sum\limits_{\vth'\leq\vth}C_\vth^{\vth'}
\left\{\int_0^te^{-\int_s^t\tilde{\CA}(\tau)d\tau}-\int_0^te^{-\int_s^t\bar{\CA}(\tau)d\tau}\right\}\left\{\tilde{\beta}^{\ga} w^{\ell_\infty}\pa_\xi^{\vth'}(\chi_M \CK)\pa_\xi^{\vth-\vth'}h_1\right\}(s,V(s))ds\notag\\
&+\sum\limits_{\vth'\leq\vth}C_\vth^{\vth'}
\int_0^te^{-\int_s^t\bar{\CA}(\tau)d\tau}\left\{\tilde{\beta}^{\ga}-\bar{\beta}^{\ga}\right\}\left\{ w^{\ell_\infty}\pa_\xi^{\vth'}(\chi_M \CK)\pa_\xi^{\vth-\vth'}h_1\right\}(s,V(s))ds
\notag\\
&+\sum\limits_{\vth'\leq\vth}C_\vth^{\vth'}
\int_0^te^{-\int_s^t\bar{\CA}(\tau)d\tau}\bar{\beta}^{\ga}\Bigg\{\left\{ w^{\ell_\infty}\pa_\xi^{\vth'}(\chi_M \CK)\pa_\xi^{\vth-\vth'}h_1\right\}(s,V(s))\notag\\&\qquad\qquad\qquad-\left\{ w^{\ell_\infty}\pa_\xi^{\vth'}(\chi_M \CK)\pa_\xi^{\vth-\vth'}\bar{h}_1\right\}(s,\bar{V}(s))\Bigg\}ds,\label{CJ-55}
\end{align}
and furthermore
\begin{align}
&\left\{ w^{\ell_\infty}\pa_\xi^{\vth'}(\chi_M \CK)\pa_\xi^{\vth-\vth'}h_1\right\}(s,V(s))-\left\{ w^{\ell_\infty}\pa_\xi^{\vth'}(\chi_M \CK)\pa_\xi^{\vth-\vth'}\bar{h}_1\right\}(s,\bar{V}(s))\notag
\\&\quad=\left\{ w^{\ell_\infty}\pa_\xi^{\vth'}(\chi_M \CK)\pa_\xi^{\vth-\vth'}h_1\right\}(s,V(s))-\left\{ w^{\ell_\infty}\pa_\xi^{\vth'}(\chi_M \CK)\pa_\xi^{\vth-\vth'}h_1\right\}(s,\bar{V}(s))
\notag
\\&\quad\quad+\left\{ w^{\ell_\infty}\pa_\xi^{\vth'}(\chi_M \CK)\pa_\xi^{\vth-\vth'}h_1\right\}(s,\bar{V}(s))-\left\{ w^{\ell_\infty}\pa_\xi^{\vth'}(\chi_M \CK)\pa_\xi^{\vth-\vth'}\bar{h}_1\right\}(s,\bar{V}(s)).\label{CK-d}
\end{align}
Next, by mean value theorem, Lemma \ref{g-ck-lem} and \eqref{VV} as well as \eqref{VV-eq}, one has for $|\vth|\leq N-1$
\begin{align}\label{K-diff}
&\nu^{-1}(\bar{V})\left|\left\{ w^{\ell_\infty}\pa_\xi^{\vth'}(\chi_M \CK)\pa_\xi^{\vth-\vth'}h_1\right\}(s,V(s))-\left\{ w^{\ell_\infty}\pa_\xi^{\vth'}(\chi_M \CK)\pa_\xi^{\vth-\vth'}h_1\right\}(s,\bar{V}(s))\right|
\notag
\\&\quad=\nu^{-1}(\bar{V})\left|\na_\xi \left\{w^{\ell_\infty}\pa_\xi^{\vth'}(\chi_M \CK)\pa_\xi^{\vth-\vth'}h_1\right\}(s,\Theta_2)\cdot(V(s)-\bar{V}(s))\right|
\notag
\\&\quad\leq C\vps_0\al\|w^{\ell_\infty}[h_1-\bar{h}_1,h_2-\bar{h}_2](t)\|_{L^\infty},
\end{align}
where $\Theta_2=\bar{V}(s)+\sigma_2(V(s)-\bar{V}(s))$ with $\si_2\in[0,1]$ and $[h_1,h_2]\in\FY_{\al,T_0}$ has been used. And Lemma \ref{g-ck-lem} gives
\begin{align}
\nu^{-1}(\bar{V})&\left|\left\{ w^{\ell_\infty}\pa_\xi^{\vth'}(\chi_M \CK)\pa_\xi^{\vth-\vth'}h_1\right\}(s,\bar{V}(s))-\left\{ w^{\ell_\infty}\pa_\xi^{\vth'}(\chi_M \CK)\pa_\xi^{\vth-\vth'}\bar{h}_1\right\}(s,\bar{V}(s))\right|\notag\\
\quad&\leq C((1+M)^{-\ga}+\varsigma)\sum\limits_{\vth'\leq\vth}
\|w^{\ell_\infty}\pa_\xi^{\vth'}[h_1-\bar{h}_1,h_2-\bar{h}_2](t)\|_{L^\infty}.\label{CK-d-1}
\end{align}
Moreover, it can be directly verified that
\begin{align}
\int_0^te^{-\int_s^t\bar{\CA}(\tau)d\tau}\bar{\beta}^{\ga}\nu(\bar{V}(t))dt<+\infty.\label{Frv}
\end{align}
Consequently, when $|\vth|\leq N-1$, we get from
\eqref{diff-bet}, \eqref{d-AA},  \eqref{CJ-55}, \eqref{CK-d}, \eqref{K-diff}, \eqref{CK-d-1}  and \eqref{Frv} that
\begin{align}
|\CJ_5-\bar{\CJ}_5|\leq C(\al+(1+M)^{-\ga}+\varsigma)\sum\limits_{\vth'\leq\vth}\|w^\ell\pa_\xi^{\vth'}[h_1-\bar{h}_1,h_2-\bar{h}_2](t)\|_{L^\infty}.\notag
\end{align}
Likewise, for $\CJ_{10}-\bar{\CJ}_{10}$, we first have
\begin{align}
\CJ_{10}-\bar{\CJ}_{10}=&\int_0^te^{-\int_s^t\tilde{\CA}(\tau)d\tau}\Big\{\al^m\tilde{\beta}^{-\ga} w^{\ell_\infty} \pa_\xi^{\vth}Q(\sqrt{\mu}h,\sqrt{\mu}h)
\notag\\&\qquad+\al \tilde{\beta}^{\ga}w^{\ell_\infty}\pa_\xi^{\vth}\{Q(\sqrt{\mu}H_1+\al \sqrt{\mu}H_2,\sqrt{\mu}h)
+Q(\sqrt{\mu}h,\sqrt{\mu}H_1+\al \sqrt{\mu}H_2)\}\Big\}(s,V(s))ds,\notag\\
&-\int_0^te^{-\int_s^t\bar{\CA}(\tau)d\tau}\Big\{\al^m\bar{\beta}^{-\ga} w^{\ell_\infty} \pa_\xi^{\vth}Q(\sqrt{\mu}\bar{h},\sqrt{\mu}\bar{h})
\notag\\&\qquad+\al \bar{\beta}^{\ga}w^{\ell_\infty}\pa_\xi^{\vth}\{Q(\sqrt{\mu}\bar{H}_1+\al \sqrt{\mu}\bar{H}_2,\sqrt{\mu}\bar{h})
+Q(\sqrt{\mu}\bar{h},\sqrt{\mu}\bar{H}_1+\al \sqrt{\mu}\bar{H}_2)\}\Big\}(s,V(s))ds.\notag
\end{align}
Then, we write
\begin{align}
\big\{w^{\ell_\infty}& \pa_\xi^{\vth}Q(\sqrt{\mu}h,\sqrt{\mu}h)\big\}(V(s))
-\big\{w^{\ell_\infty} \pa_\xi^{\vth}Q(\sqrt{\mu}\bar{h},\sqrt{\mu}\bar{h})\big\}(\bar{V}(s))\notag\\
=&\big\{w^{\ell_\infty} \pa_\xi^{\vth}Q(\sqrt{\mu}h,\sqrt{\mu}h)\big\}(V(s))
-\big\{w^{\ell_\infty} \pa_\xi^{\vth}Q(\sqrt{\mu}\bar{h},\sqrt{\mu}h)\big\}(V(s))\notag\\
&+\big\{w^{\ell_\infty} \pa_\xi^{\vth}Q(\sqrt{\mu}\bar{h},\sqrt{\mu}h)\big\}(V(s))-\big\{w^{\ell_\infty} \pa_\xi^{\vth}Q(\sqrt{\mu}\bar{h},\sqrt{\mu}h)\big\}(\bar{V}(s))\notag\\
&+\big\{w^{\ell_\infty} \pa_\xi^{\vth}Q(\sqrt{\mu}\bar{h},\sqrt{\mu}h)\big\}(\bar{V}(s))
-\big\{w^{\ell_\infty} \pa_\xi^{\vth}Q(\sqrt{\mu}\bar{h},\sqrt{\mu}\bar{h})\big\}(\bar{V}(s)).\notag
\end{align}
Next, Lemma \ref{op.es.lem} gives
\begin{align}
\nu^{-1}(\bar{V})&\left|\big\{w^{\ell_\infty} \pa_\xi^{\vth}Q(\sqrt{\mu}h,\sqrt{\mu}h)\big\}(V(s))
-\big\{w^{\ell_\infty} \pa_\xi^{\vth}Q(\sqrt{\mu}\bar{h},\sqrt{\mu}h)\big\}(V(s))\right|\notag\\
+&\nu^{-1}(\bar{V})\left|\big\{w^{\ell_\infty} \pa_\xi^{\vth}Q(\sqrt{\mu}\bar{h},\sqrt{\mu}h)\big\}(\bar{V}(s))
-\big\{w^{\ell_\infty} \pa_\xi^{\vth}Q(\sqrt{\mu}\bar{h},\sqrt{\mu}\bar{h})\big\}(\bar{V}(s))\right|\notag\\
\leq& CM_0\sum\limits_{\vth'\leq\vth}\|w^{\ell_\infty}\pa_\xi^{\vth'}[h_1-\bar{h}_1,h_2-\bar{h}_2](t)\|_{L^\infty}.
\notag
\end{align}
Then Lemma \ref{op.es.lem} together with mean value theorem and \eqref{VV} yields that for $|\vth|\leq N-1$,
\begin{align}
\nu^{-1}(\bar{V})&\left|\big\{w^{\ell_\infty} \pa_\xi^{\vth}Q(\sqrt{\mu}\bar{h},\sqrt{\mu}h)\big\}(V(s))-\big\{w^{\ell_\infty} \pa_\xi^{\vth}Q(\sqrt{\mu}\bar{h},\sqrt{\mu}h)\big\}(\bar{V}(s))\right|\notag\\
\leq& CM_0\sum\limits_{\vth'\leq\vth}\|w^{\ell_\infty}\pa_\xi^{\vth'}[h_1-\bar{h}_1,h_2-\bar{h}_2](t)\|_{L^\infty}.\notag
\end{align}
Hence it follows
\begin{align}
\sup\limits_{0\leq t\leq T_0}&\nu^{-1}(\bar{V})\left|\big\{w^{\ell_\infty} \pa_\xi^{\vth}Q(\sqrt{\mu}h,\sqrt{\mu}h)\big\}(V(t))
-\big\{w^{\ell_\infty} \pa_\xi^{\vth}Q(\sqrt{\mu}\bar{h},\sqrt{\mu}\bar{h})\big\}(\bar{V}(t))\right|
\notag\\
\leq& CM_0\sup\limits_{0\leq t\leq T_0}\sum\limits_{\vth'\leq\vth}\|w^{\ell_\infty}\pa_\xi^{\vth'}[h_1-\bar{h}_1,h_2-\bar{h}_2](t)\|_{L^\infty}.\notag
\end{align}
Moreover, one can directly show that
\begin{align}
\sup\limits_{0\leq t\leq T_0}&\nu^{-1}(\bar{V})\Bigg|w^{\ell_\infty}\pa_\xi^{\vth}\{Q(\sqrt{\mu}\bar{H}_1+\al \sqrt{\mu}\bar{H}_2,\sqrt{\mu}\bar{h})
+Q(\sqrt{\mu}\bar{h},\sqrt{\mu}\bar{H}_1+\al \sqrt{\mu}\bar{H}_2)\}\Big\}(t,V(t))\notag\\
&-w^{\ell_\infty}\pa_\xi^{\vth}\{Q(\sqrt{\mu}\bar{H}_1+\al \sqrt{\mu}\bar{H}_2,\sqrt{\mu}\bar{h})
+Q(\sqrt{\mu}\bar{h},\sqrt{\mu}\bar{H}_1+\al \sqrt{\mu}\bar{H}_2)\}\Big\}(t,V(t))\bigg|\notag\\
\leq& C\sup\limits_{0\leq t\leq T_0}\sum\limits_{\vth'\leq\vth}
\|w^{\ell_\infty}\pa_\xi^{\vth'}[h_1-\bar{h}_1,h_2-\bar{h}_2](t)\|_{L^\infty}.\notag
\end{align}
As a consequence, similarly for obtaining \eqref{CJ-55}, we get
\begin{align}
|\CJ_{10}-\bar{\CJ}_{10}|\leq C(\al+(1+M)^{-\ga}+\varsigma)\sum\limits_{\vth'\leq\vth}\|w^{\ell_\infty}\pa_\xi^{\vth'}[h_1-\bar{h}_1,h_2-\bar{h}_2](t)\|_{L^\infty}.\notag
\end{align}
Therefore \eqref{CQ-c} holds true.
Then, by taking $T_0$, $\al$ and $\varsigma$ suitably small and $M$ sufficiently large, one has
\begin{align}
\|\CN([h_1,h_2])-\CN([\bar{h}_1,\bar{h}_2])\|_{\FY_{\al,T_0}}\leq \frac{1}{2}\|[h_1,h_2]-[\bar{h}_1,\bar{h}_2]\|_{\FY_{\al,T_0}}.
\notag
\end{align}
Hence, $\CN$ is a contraction mapping on $\FY_{\al,T_0}$. So, there exists a unique $[h_1,h_2]\in\FY_{\al,T_0}$ such that
$$
[h_1,h_2]=\CN([h_1,h_2]).
$$
This completes the proof of Theorem \ref{loc.ex}.
\end{proof}

\section{Global existence and large time behavior}\label{sec.rm}

In this section, we study the global existence and large time behavior of the remainder $G_R$ determined by \eqref{GR-eq} and \eqref{GR-id} in order to complete the proof of  Theorem \ref{mth}.

 In fact, the global existence of \eqref{GR-eq} and \eqref{GR-id} follows from the local existence and {\it a priori} estimates as well as the continuum argument. Here, we only show the {\it a priori} estimates  \eqref{ap-es}, because the local existence has been established in Theorem \ref{loc.ex} in Section \ref{sec.le} and the nonnegativity can be justified in a similar way as that of \cite{DL-arma-2021}.
The approach used in the following is based on Caflisch's decomposition.

Now, our goal is to prove
\begin{align}\label{ap-es}
\sup\limits_{0\leq s\leq t}\sum\limits_{|\vth|\leq N}\beta^{2\ga}(s)\|w^{\ell_\infty}\pa_\xi^{\vth}G_{R,1}(s)\|_{L^\infty}
&+\sup\limits_{0\leq s\leq t}\sum\limits_{|\vth|\leq N}\beta^{2\ga}(s)\|w^{\ell_\infty}\pa_\xi^{\vth}G_{R,2}(s)\|_{L^\infty}\notag\\
\leq& C\sum\limits_{|\vth|\leq N}\|w^{\ell_\infty}\pa_\xi^{\vth}G_{R,0}\|_{L^\infty}+C\al^{3-m},
\end{align}
for any $t\geq0$ and for some constant $C>0$, under the {\it a priori} assumption that
\begin{align}\label{ap-as}
\sup\limits_{0\leq s\leq t}&\sum\limits_{|\vth|\leq N}\beta^{2\ga}(s)\|w^{\ell_\infty}\pa_\xi^{\vth}G_{R,1}(s)\|_{L^\infty}
+\sup\limits_{0\leq s\leq t}\sum\limits_{|\vth|\leq N}\beta^{2\ga}(s)\|w^{\ell_\infty}\pa_\xi^{\vth}G_{R,2}(s)\|_{L^\infty}\leq 2M_0.
\end{align}
The proof of \eqref{ap-es} is proceeded in the following three subsections.

\subsection{$L^\infty$ estimates}
In this subsection, we deduce the $L^\infty$ estimates on $G_{R,1}$ and $G_{R,2}$. For this, we have the following result.

\begin{lemma}\label{g12-lf-lem}
Under the conditions \eqref{ap-as}, it holds that
\begin{multline}\label{g1k-es2}
\sup\limits_{0\leq s\leq t}\sum\limits_{|\vth|\leq N}\|w^{\ell_\infty}\pa_\xi^{\vth}g_{1}(s)\|_{L^\infty}\\
\leq \sum\limits_{|\vth|\leq N}\|w^{\ell_\infty}\pa_\xi^{\vth}G_{R,0}\|_{L^\infty}+
C\al\sup\limits_{0\leq s\leq t}\sum\limits_{|\vth|\leq N} \|w^{\ell_\infty}\pa_\xi^{\vth}g_{2}(s)\|_{L^\infty}+C\al^{3-m},
\end{multline}
and
\begin{align}\label{g2k-es2}
\sup\limits_{0\leq s\leq t}\sum\limits_{|\vth|\leq N}\|w^{\ell_\infty}\pa_\xi^{\vth}g_{2}(s)\|_{L^\infty}\leq&
C\sup\limits_{0\leq s\leq t}\sum\limits_{|\vth|\leq N}\|w^{\ell_\infty}\pa_\xi^{\vth}g_{1}(s)\|_{L^\infty}
+C\sup\limits_{0\leq s\leq t}\sum\limits_{|\vth|\leq N}\|\pa_\xi^{\vth}g_2(s)\|.
\end{align}

\end{lemma}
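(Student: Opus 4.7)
The plan is to reuse the Duhamel representation constructed in the proof of Theorem \ref{loc.ex}: setting $h_i=g_i$ in \eqref{Q12}, we write
$w^{\ell_\infty}\pa_\xi^\vth g_1=\sum_{i=1}^{10}\CJ_i$ and $w^{\ell_\infty}\pa_\xi^\vth g_2=\sum_{i=11}^{15}\CJ_i$, with $\CJ_i$ already available in closed form as integrals along the characteristics $V(s)$ of \eqref{CL}. The key difference from the local theory is that we must run the estimates uniformly on $[0,t]$ for arbitrary $t$, so smallness cannot come from shortness of $T_0$ but has to be extracted from $\alpha$, from the cutoff parameter $M$, and from the dissipative semigroup factor $e^{-\int_s^t\tilde\CA(\tau)\,d\tau}$. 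The uniform time absorption identity that will be used throughout is $\int_0^t e^{-\int_s^t\tilde\CA(\tau)\,d\tau}\,\nu\,\tilde\beta^\ga(s)\,ds\le C$ as in \eqref{CA-bd}, together with $\tilde\CA\ge c_0\tilde\beta^\ga\nu$, so that every source of the form $\tilde\beta^\ga(s)\cdot(\text{bounded})$ integrates to an $O(1)$ contribution.

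For \eqref{g1k-es2} we proceed term by term through $\CJ_1,\dots,\CJ_{10}$. The initial data term $\CJ_1$ contributes exactly $\|w^{\ell_\infty}\pa_\xi^\vth G_{R,0}\|_{L^\infty}$. The transport/commutator terms $\CJ_2,\CJ_3,\CJ_4$ each carry an explicit $\al$ or $\frac{\tilde\beta'}{\tilde\beta}=O(\al^2\tilde\beta^{-\ga})$, so they are $O(\al)$ times the a priori norm and can be absorbed into the left. The crucial collision term $\CJ_5$ is handled by Lemma \ref{g-ck-lem} (for the $L^\infty$ part of $\chi_M\CK$), yielding a factor $(M^{-\ga}+\varsigma)^{1/2}$ which is absorbed once $M$ is large and $\varsigma$ small. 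The coupling term $\CJ_6$ is the only one producing $g_2$, and both the factor $\frac{\tilde\beta'}{\tilde\beta}$ and the coefficient $\al$ give the required $C\al\|w^{\ell_\infty}\pa_\xi^\vth g_2\|_{L^\infty}$ on the right of \eqref{g1k-es2}. The "forcing" terms $\CJ_7,\CJ_8,\CJ_9$ built from $H_1,H_2$ are estimated via Lemma \ref{H12-pro}; the $\al^{j-m}$ prefactors together with the $\tilde\beta^{2\ga}$ bounds on $H_1,H_2$ produce exactly $O(\al^{3-m})$. Finally the fully nonlinear $\CJ_{10}$ is bounded by Lemma \ref{op.es.lem} and the a priori hypothesis \eqref{ap-as}, giving $O(\al)$.

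For \eqref{g2k-es2} the only substantial terms are $\CJ_{14}$ and $\CJ_{15}$. In $\CJ_{15}$ the cutoff $(1-\chi_M)$ localizes the integration to $|\xi|\le M+1$, so $\mu^{-1/2}$ is bounded and Lemma \ref{Kop}/Lemma \ref{op.es.lem} gives a clean bound by $C\|w^{\ell_\infty}\pa_\xi^\vth g_1\|_{L^\infty}$ after using the $\tilde\beta^\ga(s)$-absorption identity. In $\CJ_{14}$ we handle $\tilde\beta^\ga K h_2$ by the Vidav splitting already used inside the proof of Lemma \ref{H12-pro}: for $|\xi|\ge M$ the kernel mass $\int \mathbf k_w\,d\xi_\ast$ is small; for $|\xi|\le M,\ |\xi_\ast|\ge 2M$ it is exponentially small; and for $|\xi|\le M,\ |\xi_\ast|\le 2M$ we replace $\mathbf k_w$ by the truncated $\mathbf k_{w,p}$ of \eqref{km} and apply Cauchy--Schwarz, converting the $L^\infty$ output into the $L^2$ norm $\|\pa_\xi^\vth g_2\|$ that appears in \eqref{g2k-es2}. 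The main obstacle is precisely this last step: unlike the Maxwell-molecule case $\gamma=0$, each nonlocal term now carries the genuinely growing factor $\tilde\beta^\ga(s)\sim 1+\ga\varrho_0\al^2 s$, so the passage from $L^\infty$ to $L^2$ on the bounded-velocity piece must be done in a way that is compatible with the $\tilde\beta^\ga$-weighted Duhamel semigroup; this is what makes the $L^2$ estimate of Proposition \ref{CK-l2-pro} indispensable and motivates the coupling of \eqref{g1k-es2}--\eqref{g2k-es2} with a separate $L^2$ bound on $g_2$ closed subsequently.
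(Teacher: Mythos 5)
Your overall route is the same as the paper's: write $w^{\ell_\infty}\pa_\xi^\vth g_1$, $w^{\ell_\infty}\pa_\xi^\vth g_2$ in mild form along the characteristics \eqref{CL}, use the absorption $\int_0^t e^{-\int_s^t\CA}\,\nu\beta^\ga(s)\,ds\le C$, get smallness for the $\chi_M\CK g_1$ term from Lemma \ref{g-ck-lem}, bound the $H_1,H_2$ forcing by Lemma \ref{H12-pro} to produce $C\al^{3-m}$, use the a priori bound \eqref{ap-as} for the quadratic terms, and treat the $Kg_2$ term by the three-case Vidav splitting with the truncated kernel \eqref{km}, which is exactly how the paper generates the $L^2$ norm $\|\pa_\xi^\vth g_2\|$ on the right of \eqref{g2k-es2}. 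So the skeleton is correct and matches the paper's proof of Lemma \ref{g12-lf-lem}.

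There is, however, one concrete flaw in the absorption step. You claim that $\CJ_2,\CJ_3,\CJ_4$ ``each carry an explicit $\al$ or $\tilde\beta'/\tilde\beta$'' and can therefore be absorbed into the left. This is false for $\CJ_4$ (the term $\tilde\beta^{\ga}\,\pa_\xi^{\vth'}\nu\,\pa_\xi^{\vth-\vth'}g_1$ with $\vth'>0$): it carries neither an $\al$ nor a $\tilde\beta'/\tilde\beta$, and after the semigroup absorption it contributes an $O(1)$ constant times $\sum_{\vth'<\vth}\|w^{\ell_\infty}\pa_\xi^{\vth'}g_1\|_{L^\infty}$; the same happens for the commutator pieces where derivatives fall on $\chi_M\CK$ (the paper's $\CH_6$) and for the corresponding terms in the $g_2$ equation ($\CH_{13},\CH_{14},\CH_{16}$). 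Since \eqref{g1k-es2} and \eqref{g2k-es2} contain no same-type norm of the unknown with an $O(1)$ coefficient on the right, these terms cannot be ``absorbed into the left'' by smallness; they are lower order in the number of velocity derivatives and must be closed by the derivative hierarchy, i.e.\ by first establishing the pointwise inequalities for each fixed $|\vth|$ (the analogues of \eqref{g1k-es1}--\eqref{g2k-es1}) and then taking a suitably weighted linear combination over $|\vth|=0,1,\dots,N$ (the case $|\vth|=0$ has no such terms, which starts the induction). Your proposal omits this mechanism entirely, and as written the estimate for $|\vth|\ge1$ would not close. A minor additional slip: the $L^\infty$ bound of Lemma \ref{g-ck-lem} gives the factor $(1+M)^{-\ga}+\varsigma$, not its square root; the square root appears only in the $L^2$ statement of Proposition \ref{CK-l2-pro}, which is used in the subsequent $L^2$ lemma rather than here.
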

\begin{proof}
For brevity, we denote
$$
[g_{1},g_{2}](s,\xi)=\beta^{2\ga}(s)[G_{R,1}(s,\xi),G_{R,2}(s,\xi)].
$$
In view of \eqref{GR1-eq} and \eqref{GR2-eq}, one has
\begin{align}\label{g1-eq}
\pa_t &(w^{\ell_\infty}\pa_\xi^\vth g_{1})
-\frac{\beta'}{\beta}\xi \cdot\na_\xi(w^{\ell_\infty} \pa_\xi^\vth  g_{1})
-\frac{(2\ga+3)\beta'}{\beta}w^{\ell_\infty} \pa_\xi^\vth  g_{1}
+\frac{2{\ell}\beta'}{\beta}\frac{|\xi|^2}{1+|\xi|^2}w^{\ell_\infty}\pa_\xi^{\vth}g_{1}
\notag\\&-\al A\xi\cdot\na_\xi( w^{\ell_\infty} \pa_\xi^\vth g_{1})
+2{\ell}\al\frac{\xi\cdot A\xi}{1+|\xi|^2}w^{\ell_\infty}\pa_\xi^{\vth}g_{1}+\nu\beta^{\ga} w^{\ell_\infty} \pa_\xi^\vth g_{1}\notag\\
=&\beta^{\ga}\chi_M w^{\ell_\infty}\CK \pa_\xi^\vth g_{1}
+{\bf 1}_{\vth>0}C_\vth^{\vth'} w^{\ell_\infty}\pa_\xi^{\vth'}(\chi_M \CK)\pa_\xi^{\vth-\vth'}g_1
+\frac{\beta'}{\beta} {\bf 1}_{|\vth|>0}\sum\limits_{|\vth'|=1}
C_\vth^{\vth'}w^\ell\pa_\xi^{\vth'}\xi\cdot\na_\xi\pa_\xi^{\vth-\vth'}g_{1}
\notag\\&+\al{\bf 1}_{|\vth|>0}\sum\limits_{|\vth'|=1}C_\vth^{\vth'}
w^{\ell_\infty}\pa_\xi^{\vth'}(A\xi)\cdot\na_\xi\pa_\xi^{\vth-\vth'}g_{1}
-{\bf 1}_{|\vth|>0}C_\vth^{\vth'}\beta^{\ga} \pa_\xi^{\vth'}\nu w^{\ell_\infty} \pa_\xi^{\vth-\vth'} g_{1}
-\frac{\beta'}{2\beta} w^{\ell_\infty}\pa_\xi^{\vth}\left\{|\xi|^2\sqrt{\mu}g_2\right\}
\notag\\&-\frac{\al}{2} w^{\ell_\infty}\pa_\xi^{\vth}\left\{\xi\cdot(A\xi)\sqrt{\mu}g_{2}\right\}
-\al^{1-m}\beta^{2\ga}w^{\ell_\infty}\pa_\xi^{\vth}\left\{\sqrt{\mu}\pa_t G_1\right\}
-\al^{2-m}\beta^{2\ga}w^{\ell_\infty}\pa_\xi^{\vth}\left\{\sqrt{\mu}\pa_t G_2\right\}
\notag\\&+\al^{3-m}\beta^{2\ga}\beta_1w^{\ell_\infty}\na_\xi\cdot(\xi\mu)
+\al^{1-m}\beta'\beta^{2\ga-1}w^{\ell_\infty}\pa_\xi^{\vth}\left\{\na_\xi\cdot(\xi\sqrt{\mu}G_1)\right\}
\notag\\&+\al^{2-m}\beta'\beta^{2\ga-1}w^{\ell_\infty}\pa_\xi^{\vth}\left\{\na_\xi\cdot(\xi\sqrt{\mu}G_2)\right\}
+\al^{3-m}\beta^{2\ga}w^{\ell_\infty}\na_\xi\cdot(A\xi\sqrt{\mu}G_2)
\notag\\&+\al^{3-m}\beta^{3\ga}w^{\ell_\infty}\pa_\xi^{\vth}\{Q(\sqrt{\mu}G_1,\sqrt{\mu}G_2)+Q(\sqrt{\mu}G_2,\sqrt{\mu}G_1)\}
+\al^{4-m}\beta^{3\ga}w^{\ell_\infty} \pa_\xi^{\vth}Q(\sqrt{\mu}G_2,\sqrt{\mu}G_2)
\notag\\&+\al^m w^{\ell_\infty} \beta^{-\ga}\pa_\xi^{\vth}Q(\sqrt{\mu}g,\sqrt{\mu}g)
+\al\beta^{\ga}w^{\ell_\infty}\pa_\xi^{\vth}\{Q(\sqrt{\mu}G_1+\al \sqrt{\mu}G_2,\sqrt{\mu}g)
+Q(\sqrt{\mu}g,\sqrt{\mu}G_1+\al \sqrt{\mu}G_2)\},
\end{align}
\begin{align}
g_{1}(0,\xi)=G_{R,0},\notag
\end{align}
\begin{align}\label{g2-eq}
\pa_t (w^{\ell_\infty}\pa_\xi^\vth g_{2})&
-\frac{\beta'}{\beta}\xi \cdot\na_\xi(w^{\ell_\infty} \pa_\xi^\vth  g_{2})-\frac{(2\ga+3)\beta'}{\beta}w^{\ell_\infty} \pa_\xi^\vth  g_{2}
+\frac{2{\ell}\beta'}{\beta}\frac{|\xi|^2}{1+|\xi|^2}w^{\ell_\infty}\pa_\xi^{\vth}g_{2}
\notag\\&-\al A\xi\cdot\na_\xi( w^{\ell_\infty}\pa_\xi^\vth g_{2})+2{\ell}\al\frac{\xi\cdot A\xi}{1+|\xi|^2}w^{\ell_\infty}\pa_\xi^{\vth}g_{2}
+\beta^\ga \nu w^{\ell_\infty} \pa_\xi^\vth g_{2}\notag\\
=&\beta^\ga\chi_M w^{\ell_\infty} K \pa_\xi^\vth g_{2}
+{\bf 1}_{\vth>0}C_\vth^{\vth'}\beta^\ga w^{\ell_\infty}\pa_\xi^{\vth'}(\chi_M K)\pa_\xi^{\vth-\vth'}g_2
\notag\\&+\frac{\beta'}{\beta} {\bf 1}_{|\vth|>0}\sum\limits_{|\vth'|=1}
C_\vth^{\vth'}w^{\ell_\infty}\pa_\xi^{\vth'}\xi\cdot\na_\xi\pa_\xi^{\vth-\vth'}g_{2}
+\al{\bf 1}_{|\vth|>0}\sum\limits_{|\vth'|=1}C_\vth^{\vth'}w^{\ell_\infty}\pa_\xi^{\vth'}(A\xi)\cdot\na_\xi\pa_\xi^{\vth-\vth'}g_{1}
\notag\\&-{\bf 1}_{|\vth|>0}C_\vth^{\vth'}\beta^\ga w^{\ell_\infty}\pa_\xi^{\vth'}\nu  \pa_\xi^{\vth-\vth'} g_{2}
+\beta^\ga w^{\ell_\infty}\pa_\xi^\vth\left\{(1-\chi_M)\mu^{-\frac{1}{2}} \CK g_{2}\right\},
\end{align}
and
\begin{align}
g_{2}(0,\xi)=0,\notag
\end{align}
respectively, where $\sqrt{\mu}g=g_1+\sqrt{\mu}g_2.$

Recalling Lemma \ref{H12-pro}, since $[g_1,g_2]\in\FY_{\al,t}$ for any $t\geq0$, one can gets for any $\ell\geq0$
\begin{align}
\sum\limits_{|\vth|\leq N+1}\|w^{\ell}\pa_t^k\pa_\xi^\vartheta G_1(t,\xi)\|_{L^\infty}\leq C\al^{2k}\beta^{-\ga-k\ga},\label{G1-es}
\end{align}
and
\begin{align}
\sum\limits_{|\vth|\leq N}\|w^{\ell}\pa_t^k\pa_\xi^\vartheta G_2(t,\xi)\|_{L^\infty}\leq C\al^{2k}\beta^{-2\ga-k\ga},\label{G2-es}
\end{align}
where $C>0$ depends on $\ell, k$ and $N.$ Moreover, it holds that
\begin{align}\label{bet-eq}
\beta^\ga(t)\sim 1+\ga\vho_0\al^2t,
\end{align}
for any $t\geq0.$

We now turn to estimate $g_1$ and $g_2$.
Integrating along the backward trajectory \eqref{CL} with respect to $s\in[0,t]$, one can write the solutions of \eqref{g1-eq} and \eqref{g2-eq} as the following mild form
\begin{align}
w^{\ell_\infty}\pa_\xi^{\vth}g_{1}(t,x,v)=\sum\limits_{i=1}^{11}\CH_{i},\label{g1k}
\end{align}
with
\begin{align}
\CH_1=e^{-\int_0^t\CA(s)ds}w^{\ell_\infty}\pa_\xi^\vth G_{R,0}(V(0)),\notag
\end{align}
\begin{align}
\CH_2= {\bf 1}_{|\vth|>0}\sum\limits_{|\vth'|=1}C_\vth^{\vth'}\int_0^te^{-\int_s^t\CA(\tau)d\tau}
\left\{\frac{\beta'}{\beta}w^{\ell_\infty}\pa^{\vth'}_\xi \xi\cdot\na_\xi\pa_\xi^{\vth-\vth'}g_{1}\right\}(s,V(s))ds,\notag
\end{align}
\begin{align}
\CH_3=\al{\bf 1}_{|\vth|>0}\sum\limits_{|\vth'|=1}C_\vth^{\vth'}\int_0^te^{-\int_s^t\CA(\tau)d\tau}
\{w^{\ell_\infty}\pa_\xi^{\vth'}(A\xi)\cdot\na_\xi\pa^{\vth-\vth'}_\xi g_{1}\}(s,V(s))ds,\notag
\end{align}
\begin{align}
\CH_4=-{\bf 1}_{|\vth|>0}\sum\limits_{0<\vth'\leq \vth}C_\vth^{\vth'}
\int_0^te^{-\int_s^t\CA(\tau)d\tau}\{\beta^{\ga} w^{\ell_\infty}\pa^{\vth'}_\xi\nu\pa^{\vth-\vth'}_{\xi}g_{1}\}(s,V(s))ds
,\notag
\end{align}
\begin{align}
\CH_5=\int_0^te^{-\int_s^t\CA(\tau)d\tau}\{\beta^{\ga} w^{\ell_\infty}\chi_M\CK \pa_{\xi}^\vth g_{1}\}(s,V(s))ds
,\notag
\end{align}
\begin{align}
\CH_6={\bf 1}_{\vth\geq\vth'>0}C_\vth^{\vth'}\int_0^te^{-\int_s^t\CA(\tau)d\tau}\{\beta^{\ga} w^{\ell_\infty}\pa_\xi^{\vth'}(\chi_M \CK)\pa_\xi^{\vth-\vth'}g_1\}(s,V(s))ds
,\notag
\end{align}
\begin{align}
\CH_7=-\int_0^t&e^{\int_s^t\CA(\tau)d\tau}\left\{\frac{\beta'}{2\beta}w^{\ell_\infty}\pa_{\xi}^\vth(|\xi|^2\sqrt{\mu}g_{2})
+\frac{\al}{2} w^{\ell_\infty}\pa_{\xi}^\vth(\xi\cdot(A\xi)\sqrt{\mu}g_{2})\right\}(s,V(s))ds,\notag
\end{align}
\begin{align}
\CH_8=-\int_0^te^{\int_s^t\CA(\tau)d\tau}&\Bigg\{\al^{1-m}\beta^{2\ga} w^{\ell_\infty}\pa_\xi^{\vth}\left\{\sqrt{\mu}\pa_t G_1\right\}
+\al^{2-m}\beta^{2\ga} w^{\ell_\infty}\pa_\xi^{\vth}\left\{\sqrt{\mu}\pa_t G_2\right\}
\notag\\&-\al^{1-m}\beta'\beta^{2\ga-1} w^{\ell_\infty}\pa_\xi^{\vth}\left\{\na_\xi\cdot(\xi\sqrt{\mu}G_1)\right\}\bigg\}(s,V(s))ds,\notag
\end{align}
\begin{align}
\CH_9=\int_0^te^{\int_s^t\CA(\tau)d\tau}&\bigg\{
\al^{3-m}\beta^{2\ga}\beta_1w^{\ell_\infty}\na_\xi\cdot(\xi\mu)+\al^{2-m}\beta'\beta^{2\ga-1} w^{\ell_\infty}\pa_\xi^{\vth}
\left\{\na_\xi\cdot(\xi\sqrt{\mu}G_2)\right\}
\notag\\&+\al^{3-m}\beta^{2\ga} w^{\ell_\infty}\pa_\xi^{\vth}\left\{\na_\xi\cdot(A\xi\sqrt{\mu}G_2)\right\}\bigg\}(s,V(s))ds,\notag
\end{align}
\begin{align}
\CH_{10}=\int_0^te^{-\int_s^t\CA(\tau)d\tau}w^{\ell_\infty} \Big\{\al^{3-m}&\beta^{3\ga} \pa_\xi^{\vth}\{Q(\sqrt{\mu}G_1,\sqrt{\mu}G_2)+Q(\sqrt{\mu}G_2,\sqrt{\mu}G_1)\}
\notag\\&+\al^{4-m}\beta^{3\ga} \pa_\xi^{\vth}Q(\sqrt{\mu}G_2,\sqrt{\mu}G_2)\Big\}(s,V(s))ds,\notag
\end{align}
\begin{align}
\CH_{11}=\int_0^t&e^{-\int_s^t\CA(\tau)d\tau}\Big\{\al^m\beta^{-\ga} w^{\ell_\infty} \pa_\xi^{\vth}\{Q(\sqrt{\mu}g,\sqrt{\mu}g)\}\notag\\
&+\al w^{\ell_\infty}\beta^{\ga}\pa_\xi^{\vth}\{Q(\sqrt{\mu}G_1+\al \sqrt{\mu}G_2,\sqrt{\mu}g)
+Q(\sqrt{\mu}g,\sqrt{\mu}G_1+\al \sqrt{\mu}G_2)\}\Big\}(s,V(s))ds,\notag
\end{align}
and
\begin{align}
w^{\ell_\infty}\pa_\ze^\vth g_{2}(t,x,v)
=\sum\limits_{i=12}^{17}\CH_{i},\label{g2k}
\end{align}
with
\begin{align}
\CH_{12}= {\bf 1}_{|\vth|>0}\sum\limits_{|\vth'|=1}C_\vth^{\vth'}\int_0^te^{-\int_s^t\CA(\tau)d\tau}
\left\{\frac{\beta'}{\beta}w^{\ell_\infty}\pa^{\vth'}_\xi \xi\cdot\na_\xi\pa_\xi^{\vth-\vth'}g_{2}\right\}(s,V(s))ds,\notag
\end{align}
\begin{align}
\CH_{13}=\al{\bf 1}_{|\vth|>0}\sum\limits_{|\vth'|=1}C_\vth^{\vth'}\int_0^te^{-\int_s^t\CA(\tau)d\tau}
\{w^{\ell_\infty}\pa_\xi^{\vth'}(A\xi)\cdot\na_\xi\pa^{\vth-\vth'}_\xi g_{2}\}(s,V(s))ds,\notag
\end{align}
\begin{align}
\CH_{14}=-{\bf 1}_{|\vth|>0}\sum\limits_{0<\vth'\leq \vth}C_\vth^{\vth'}
\int_0^te^{-\int_s^t\CA(\tau)d\tau}\{\beta^\ga w^{\ell_\infty}\pa^{\vth'}_\xi\nu\pa^{\vth-\vth'}_{\xi}g_{2}\}(s,V(s))ds
,\notag
\end{align}
\begin{align}
\CH_{15}=\int_0^te^{-\int_s^t\CA(\tau)d\tau}\{\beta^\ga w^{\ell_\infty}\chi_M K \pa_{\xi}^\vth g_{2}\}(s,V(s))ds
,\notag
\end{align}
\begin{align}
\CH_{16}={\bf 1}_{\vth\geq\vth'>0}C_\vth^{\vth'}\int_0^te^{-\int_s^t\CA(\tau)d\tau}
\{\beta^\ga w^{\ell_\infty}\pa_\xi^{\vth'}(\chi_M K)\pa_\xi^{\vth-\vth'}g_2\}(s,V(s))ds,\notag
\end{align}
and
\begin{align}
\CH_{17}=\int_0^te^{-\int_s^t\CA(\tau)d\tau}\left\{\beta^\ga w^{\ell_\infty}\pa_\xi^\vth\left\{(1-\chi_M)\mu^{-\frac{1}{2}} \CK g_{2}\right\}\right\}(s,V(s))ds.\notag
\end{align}
Here, we have denoted
\begin{align}
\CA(\tau,V(\tau))=&\beta^{\ga}\nu(V(\tau))-\frac{(3+2\ga)\beta'}{\beta}+2\ell_{\infty} \frac{\beta'}{\beta} \frac{|V(\tau)|^2}{1+|V(\tau)|^2} +2{\ell_\infty} \al \frac{V(\tau)\cdot (AV(\tau))}{{1+|V(\tau)|^2}}.\notag
\end{align}
One sees that, as long as ${\ell_\infty}\al>0$ and $\al$ are suitably small,
$$\CA(\tau,V(\tau))\geq \frac{1}{2}\beta^{\ga}\nu(V(\tau))>\tilde{C}_0\beta^{\ga},$$
for some $\tilde{C}_0>0$. Moreover, it holds that
\begin{align}
\int_0^te^{-\int_s^t\CA(\tau)d\tau}\beta^{\ga}(s)\nu(V(s))ds<\infty.\label{Ala-ubd}
\end{align}
We now turn to estimate $\CH_i$ $(1\leq i\leq 17)$ separately.
We start with the nonlocal terms $\CH_5$, $\CH_6$, $\CH_{10}$, $\CH_{11}$, $\CH_{15}$, $\CH_{16}$ and $\CH_{16}$.

For $\CH_5$, applying \eqref{Ala-ubd} and using \eqref{CK2} in Lemma \ref{g-ck-lem}, we obtain
\begin{align}
|\CH_5|\leq& C\int_0^te^{-\int_s^t\frac{\beta^{\ga}\nu(V(\tau))}{2}d\tau}\beta^{\ga}\nu(V(s))ds
\sup\limits_{0\leq s\leq t} \left\|\left\{\nu^{-1}w^{\ell_\infty}(\chi_M\CK \pa_\xi^\vth g_{1})\right\}(s,V(s))\right\|_{L^\infty}
\notag\\
 \leq& C\left((1+M)^{-\ga}+\varsigma\right)\sup\limits_{0\leq s\leq t}\|w^{\ell_\infty}\pa_{\xi}^{\vth}g_{1}(s)\|_{L^\infty}.\notag
\end{align}
Recalling \eqref{sp.cL}, one gets from Lemma \ref{op.es.lem} that
\begin{align}
|\CH_6|\leq&{\bf 1}_{|\vth|>0} C\int_0^te^{-\int_s^t\frac{\beta^{\ga}\nu(V(\tau))}{2}d\tau}\beta^{\ga}\nu(V(s))ds\sum\limits_{\vth'<\vth}
\sup\limits_{0\leq s\leq t} \left\|w^{\ell_\infty}(\pa_\xi^{\vth'} g_{1})(s,V(s))\right\|_{L^\infty}
\notag\\
 \leq&C{\bf 1}_{|\vth|>0}\sum\limits_{\vth'<\vth}
\sup\limits_{0\leq s\leq t}\left\|w^{\ell_\infty}\pa_\xi^{\vth'} g_{1}(s)\right\|_{L^\infty}.\notag
\end{align}
For $\CH_{10}$, \eqref{G1-es}, \eqref{G2-es} and Lemma \ref{op.es.lem} give
\begin{align}
|\CH_{10}|\leq&C\al^{3-m}\int_0^te^{-\int_s^t\frac{\beta^{\ga}\nu(V(\tau))}{2}d\tau}\beta^{\ga}\nu(V(s))ds
\notag\\&\qquad\qquad\times\sum\limits_{\vth'+\vth''\leq\vth}\sup\limits_{0\leq s\leq t} \beta^{2\ga}(s)
\left\|w^{\ell_\infty}\pa_\xi^{\vth'} G_{1}(s)\right\|_{L^\infty}\left\|w^{\ell_\infty}\pa_\xi^{\vth''} G_{2}(s)\right\|_{L^\infty}
 \leq C\al^{3-m}.\notag
\end{align}
Likewise, for $\CH_{11}$, applying \eqref{G1-es}, \eqref{G2-es} and Lemma \ref{op.es.lem} as well as the {\it a priori} assumption \eqref{ap-as},
one has
\begin{align}
|\CH_{11}|\leq&C\al^m\int_0^te^{-\int_s^t\frac{\beta^{\ga}\nu(V(\tau))}{2}d\tau}\beta^{\ga}\nu(V(s))ds\sum\limits_{\vth'+\vth''\leq\vth}
\sup\limits_{0\leq s\leq t}
\left\|w^{\ell_\infty}\pa_\xi^{\vth'} [g_{1},g_2](s)\right\|_{L^\infty}\left\|w^{\ell_\infty}\pa_\xi^{\vth''} [g_{1},g_2](s)\right\|_{L^\infty}
\notag\\
&+C\al\int_0^te^{-\int_s^t\frac{\beta^{\ga}\nu(V(\tau))}{2}d\tau}\beta^{\ga}\nu(V(s))ds\sum\limits_{\vth'+\vth''\leq\vth}
\sup\limits_{0\leq s\leq t} \left\|w^{\ell_\infty}\pa_\xi^{\vth'} [G_{1},G_2](s)\right\|_{L^\infty}\left\|w^\ell\pa_\xi^{\vth''} [g_{1},g_2](s)\right\|_{L^\infty}\notag\\
 \leq&C\al\sum\limits_{\vth'\leq\vth}
\sup\limits_{0\leq s\leq t} \left\|w^{\ell_\infty}\pa_\xi^{\vth'} [g_{1},g_2](s)\right\|_{L^\infty}.\notag
\end{align}
For the delicate term $\CH_{15}$, we first rewrite
\begin{align}
\CH_{15}=\int_0^te^{-\int_s^t\CA(\tau)d\tau}\beta^\ga(s)\int_{\R^3}{\bf k}_w(V(s),\xi_\ast)(w^{\ell_\infty}\pa_\xi^{\vth}g_{2})(s,\xi_\ast)d\xi_\ast ds.\notag
\end{align}
As in the proof of Lemma \ref{H12-pro}, the computation for $\CH_{15}$ is then divided in the following three cases.

\medskip
\noindent\underline{{\it Case 1. $|V(s)|>M$.}} In this case, we get from Lemma \ref{Kop} that
\begin{align}
|\CH_{15}|\leq\frac{C}{M}\sup\limits_{0\leq s\leq t}\|w^{\ell_\infty}\pa_\xi^{\vth}g_{2}(s)\|_{L^\infty}.\notag
\end{align}
\underline{{\it Case 2. $|V(s)|\leq M$ and $|\xi_\ast|>2M$.}} In this situation, one has $|V(s)-\xi_\ast|>M$, thus it follows
\begin{equation*}
\mathbf{k}_w(V,\xi_\ast)
\leq Ce^{-\frac{\vps M^2}{8}}\mathbf{k}_w(V,\xi_\ast)e^{\frac{\vps |V-\xi_\ast|^2}{8}},
\end{equation*}
this together with Lemma \ref{Kop} leads to
\begin{align}
|\CH_{15}|\leq Ce^{-\frac{\vps M^2}{8}}\sup\limits_{0\leq s\leq t}\|w^{\ell_\infty}\pa_\xi^{\vth}g_{2}(s)\|_{L^\infty}.\notag
\end{align}

\noindent\underline{{\it Case 3. $|V(s)|\leq M$ and $|\xi_\ast|\leq2M$.}} At this stage, recalling ${\bf k}_{w,p}(V,\xi_\ast)$ defined by \eqref{km}, we write
\begin{align}
\CH_{15}=\int_0^te^{-\int_s^t\CA(\tau)d\tau}\int_{\R^3}\beta^\ga(s)
[{\bf k}_w-{\bf k}_{w,p}+{\bf k}_{w,p}](V(s),\xi_\ast)(w^{\ell_\infty}\pa_\xi^{\vth}g_{2})(s,\xi_\ast)d\xi_\ast ds,\notag
\end{align}
which further gives the bound
\begin{align*}
|\CH_{15}|&\leq C\int_{|\xi_\ast|\leq 2M}\mathbf{k}_{w,p}(\xi,\xi_\ast)|\pa_\xi^\vartheta g_2|d\xi_\ast
+\frac{1}{M}\|w^{\ell_\infty}\pa_\xi^\vartheta g_2\|_{L^\infty}\\
&\leq C_{p,M}\|\pa_\xi^\vartheta g_2\|+\frac{C}{M}\|w^{\ell_\infty}\pa_\xi^\vartheta g_2\|_{L^\infty}.
\end{align*}
To summarize, we arrive at
\begin{align*}
|\CH_{15}|
&\leq C\|\pa_\xi^\vartheta g_2\|+C\left\{\frac{1}{M}+e^{-\frac{\vps M^2}{8}}\right\}\|w^{\ell_\infty}\pa_\xi^\vartheta g_2\|_{L^\infty}.
\end{align*}
For $\CH_{16}$, from Lemma \ref{Ga}, it follows
\begin{align}
|\CH_{16}|\leq&C{\bf 1}_{\vth>0}\sum\limits_{\vth'<\vth}
\sup\limits_{0\leq s\leq t} \left\|w^{\ell_\infty}\pa_\xi^{\vth'} g_2(s)\right\|_{L^\infty}.\notag
\end{align}
For $\CH_{17}$, one has by Lemma \ref{op.es.lem}
\begin{align}
|\CH_{17}|\leq&C\sum\limits_{\vth'\leq\vth}
\sup\limits_{0\leq s\leq t} \left\|w^{\ell_\infty}\pa_\xi^{\vth'} g_1(s)\right\|_{L^\infty}.\notag
\end{align}
The remaining terms in \eqref{g1k} and \eqref{g2k} will be computed as follows
\begin{align}
|\CH_1|\leq\|w^{\ell_\infty}\pa_\xi^\vth G_{R,0}\|_{L^\infty}.\notag
\end{align}
\begin{align}
|\CH_3|\leq {\bf 1}_{\vth>0}C\al\sup\limits_{0\leq s\leq t}\sum\limits_{|\vth'|=|\vth|}
\|w^{\ell_\infty}\pa_{\xi}^{\vth'}g_{1}(s)\|_{L^\infty},\ \ |\CH_{13}|\leq {\bf 1}_{\vth>0}C\al\sup\limits_{0\leq s\leq t}\sum\limits_{|\vth'|=|\vth|}
\|w^{\ell_\infty}\pa_{\xi}^{\vth'}g_{2}(s)\|_{L^\infty},\notag
\end{align}
\begin{align}
|\CH_4|\leq {\bf 1}_{\vth>0}C\sup\limits_{0\leq s\leq t}\sum\limits_{\vth'<\vth}
\|w^{\ell_\infty}\pa_{\xi}^{\vth'}g_{1}(s)\|_{L^\infty},\ \ |\CH_{14}|\leq {\bf 1}_{\vth>0}C\sup\limits_{0\leq s\leq t}\sum\limits_{\vth'<\vth}
\|w^{\ell_\infty}\pa_{\xi}^{\vth'}g_{2}(s)\|_{L^\infty}.\notag
\end{align}
Since
\begin{align}
\frac{\beta'}{\beta}\sim \vho_0\beta^{-\ga}\al^2,\label{bbp}
\end{align}
according to \eqref{bet-eq}, one has
\begin{align}
|\CH_2|\leq {\bf 1}_{\vth>0}C\al^2\sup\limits_{0\leq s\leq t}
\|w^{\ell_\infty}\pa_{\xi}^{\vth}g_{1}(s)\|_{L^\infty},\ \ |\CH_{12}|\leq {\bf 1}_{\vth>0}C\al^2\sup\limits_{0\leq s\leq t}
\|w^{\ell_\infty}\pa_{\xi}^{\vth}g_{2}(s)\|_{L^\infty},\notag
\end{align}
and
\begin{align}
|\CH_{7}|\leq C\al\sup\limits_{0\leq s\leq t}\sum\limits_{\vth'\leq\vth}
\|w^{\ell_\infty}\pa_{\xi}^{\vth'}g_{2}(s)\|_{L^\infty}.\notag
\end{align}
Finally, using \eqref{G1-es}, \eqref{G2-es} and \eqref{bbp}, we obtain
\begin{align}
|\CH_{8}|,\ |\CH_{9}|\leq C\al^{3-m}.\notag
\end{align}
As a consequence, by plugging all the above estimates for $\CH_i$ $(1\leq i\leq 18)$ into \eqref{g1k} and \eqref{g2k}, respectively,
one gets
\begin{align}\label{g1k-es1}
|w^{\ell_\infty}\pa_\xi^{\vth}g_{1}(t,\xi)|\leq&\|w^{\ell_\infty}\pa_{\xi}^{\vth}G_{R,0}\|_{L^\infty}
+{\bf 1}_{\vth>0}C\sup\limits_{0\leq s\leq t}\sum\limits_{\vth'<\vth} \|w^{\ell_\infty}\pa_{\xi}^{\vth'}g_{1}(s)\|_{L^\infty}
\notag\\
&+C\left(\al+(1+M)^{-\ga}+\varsigma\right)
\sup\limits_{0\leq s\leq t} \|w^{\ell_\infty}\pa^{\vth}g_{1}(s)\|_{L^\infty}
\notag\\&+C\al
\sup\limits_{0\leq s\leq t}\sum\limits_{\vth'\leq\vth} \|w^{\ell_\infty}\pa_{\xi}^{\vth'}g_2(s)\|_{L^\infty}+C\al^{3-m},
\end{align}
and
\begin{align}\label{g2k-es1}
|w^{\ell_\infty}\pa_\xi^{\vth}g_{2}(t,\xi)|\leq&
C\sup\limits_{0\leq s\leq t}\sum\limits_{\vth'\leq \vth}\|w^{\ell_\infty}\pa_{\xi}^{\vth'}g_{1}(s)\|_{L^\infty}
+{\bf 1}_{\ze>0}C\sup\limits_{0\leq s\leq t}\sum\limits_{\vth'<\vth} \|w^{\ell_\infty}\pa_{\xi}^{\vth'}g_{2}(s)\|_{L^\infty}\notag\\
&+C\left(\al+e^{-\frac{\vps M^2}{8}}+\frac{1}{M}\right)\sup\limits_{0\leq s\leq t} \|w^{\ell_\infty}\pa_{\xi}^{\vth}g_{2}(s)\|_{L^\infty}
+\sup\limits_{0\leq s\leq t}\|\pa_\xi^\vartheta g_2(s)\|.
\end{align}
Furthermore, taking a linear combination of \eqref{g1k-es1} and \eqref{g2k-es1} with $|\vth|=0,1,\cdots, N$ and adjusting constants, we see that both \eqref{g1k-es2} and \eqref{g2k-es2} are true. This completes the proof of Lemma \ref{g12-lf-lem}.
\end{proof}


\subsection{$L^2$ estimates} In this subsection, we deduce the $L^2$ estimates on $G_{R,1}$ and $G_{R,2}$. For this, we have the following result.

\begin{lemma}\label{g12-l2-lem}
Under the conditions \eqref{ap-as}, it holds that
\begin{align}
\sum\limits_{|\vth|\leq N}&\|\pa_\xi^{\vth}g_2(t)\|^2+\sum\limits_{\vth \leq N}\|w^{\ell_2}\pa_\xi^{\vth}g_{1}(t)\|^2
\leq C\sum\limits_{\vth \leq N}\|w^{\ell_2}\pa_\xi^{\vth}g_{1}(0,\xi)\|^2+C\eta\|w^{\ell_\infty}\pa_\xi^{\vth}g_{1}\|_{L^\infty}^2+
C\al^{6-2m},\label{l2-final}
\end{align}
for any $t\geq0.$
\end{lemma}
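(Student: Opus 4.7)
My plan is to derive coupled weighted $L^2$ energy estimates for $g_1$ and $g_2$ from the equations \eqref{g1-eq} and \eqref{g2-eq}, and then to combine them with a carefully chosen relative weight and integrate in time.

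First, I would apply $\pa_\xi^\vth$ to \eqref{g1-eq} and take the $L^2$ inner product with $w^{2\ell_2}\pa_\xi^\vth g_{1}$. The transport terms $-\frac{\beta'}{\beta}\xi\cdot\na_\xi$ and $-\al A\xi\cdot\na_\xi$ produce bounded contributions after integration by parts using ${\rm tr}A=0$ and $\beta'/\beta\sim\al^2\beta^{-\ga}$ coming from \eqref{bet-eq}, while the term $\nu\beta^\ga g_1$ furnishes the dissipation $\beta^\ga\|\nu^{1/2}w^{\ell_2}\pa_\xi^\vth g_{1}\|^2$. The critical step is the collision contribution $\beta^\ga\chi_M\CK\pa_\xi^\vth g_1$: Proposition \ref{CK-l2-pro} with $M$ large and $\varsigma$ small produces exactly the smallness required to absorb this into the dissipation. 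Source terms involving $G_1,G_2$ are handled in $L^\infty$ via Lemma \ref{H12-pro} and contribute $C\al^{6-2m}$ after Young's inequality, while the nonlinear $Q$ terms are controlled by Lemma \ref{op.es.lem} together with the \emph{a priori} bound \eqref{ap-as}. The summand $C\eta\|w^{\ell_\infty}\pa_\xi^\vth g_1\|_{L^\infty}^2$ in the target estimate I expect to arise from a Young split applied to a cross term such as $\al\beta^\ga Q(\sqrt\mu G_1,\sqrt\mu g)$ in which the weighted $L^2$ norm alone is insufficient and one must invoke the $L^\infty$ norm of $g_1$.

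Next, I would apply $\pa_\xi^\vth$ to \eqref{g2-eq} and pair with $\pa_\xi^\vth g_{2}$, this time without any polynomial weight. The coercivity of $L$ yields $\la\beta^\ga\|\FP_1\pa_\xi^\vth g_2\|_\nu^2$; the macroscopic component $\FP_0 g_2$ is controlled by moments of $g_1$ via the conservation constraint built into $\FY_{\al,T}$. The delicate source $\beta^\ga(1-\chi_M)\mu^{-1/2}\CK g_1$ becomes tractable because $(1-\chi_M)$ is supported on $\{|\xi|\leq M+1\}$, so $\mu^{-1/2}$ there is bounded by a constant $C_M$, whence
\[
\beta^\ga\|(1-\chi_M)\mu^{-1/2}\CK g_1\|\leq C_M\beta^\ga\|\nu^{1/2}w^{\ell_2}g_1\|.
\]
Young's inequality then leaves $\frac{\la}{2}\beta^\ga\|g_2\|_\nu^2$ plus a multiple of $C_M^2\beta^\ga\|\nu^{1/2}w^{\ell_2}g_1\|^2$ to be absorbed later by the $g_1$ dissipation.

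The final step is to form the combined energy $E_N(t)=\sum_{|\vth|\leq N}\{A\|w^{\ell_2}\pa_\xi^\vth g_1\|^2+\|\pa_\xi^\vth g_2\|^2\}$ with $A=A(M)$ chosen large enough that $C_M^2\beta^\ga\|\nu^{1/2}w^{\ell_2}g_1\|^2$ is dominated by the $A$-weighted dissipation on $g_1$, to sum the differential inequality hierarchically over $|\vth|\leq N$ (commutator-type lower-order derivatives absorb into lower-order dissipation), and then to integrate over $[0,t]$ to obtain \eqref{l2-final}. The main obstacle I foresee is the simultaneous bookkeeping of the growth factor $\beta^\ga(t)\sim 1+\ga\vho_0\al^2 t$ alongside the $M$-dependent constants: since both the dissipation on the left and the source from $(1-\chi_M)\mu^{-1/2}\CK g_1$ on the right carry the same $\beta^\ga$, the parameters $A$, $M$, $\varsigma$, and $\al$ must be tuned consistently, all while producing the correct power $\al^{6-2m}$ (the square of the $\al^{3-m}$ that appears in the $L^\infty$ estimate) in the final bound.
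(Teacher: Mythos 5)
Your proposal follows essentially the same route as the paper: a $w^{2\ell_2}$-weighted $L^2$ energy estimate for $g_1$ in which Proposition \ref{CK-l2-pro} absorbs $\beta^\ga\chi_M\CK$ into the dissipation and the quadratic sources produce the $\eta\|w^{\ell_\infty}\pa_\xi^{\vth}g_1\|_{L^\infty}^2+C\al^{6-2m}$ terms, an unweighted estimate for $g_2$ using the coercivity of $L$ with the macroscopic part controlled by moments of $g_1$ through the conservation laws, and a suitably weighted combination of the two so that the $M$-dependent source $C_M\beta^\ga\|w^{\ell_2}g_1\|^2$ is dominated by the $g_1$ dissipation. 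The only caution is at the final step: plainly integrating the combined inequality over $[0,t]$ would let the constant source $C\al^{6-2m}$ (and the $L^\infty$ term) grow linearly in time, so you must exploit the coercive dissipation ($\nu\gtrsim 1$, $\beta^\ga\geq 1$) via a Gronwall-type argument to obtain the bound uniformly in $t$, which is what the paper's concluding implication tacitly does.
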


\begin{proof}
The proof is divided in the following four steps.

\medskip
\noindent\underline{{\it Step 1. The estimates for $\FP_0g_2$.}}
In this step, we consider the basic $L^2$ estimate for $\FP_0g_2$, in which the conservation laws plays an important role.
Recall $[g_1,g_2](t,\xi)=\beta^{2\ga}(t)[G_{R,1},G_{R,2}](t,\xi)$ and $\sqrt{\mu}g=g_1+\sqrt{\mu}g_2$. It is straightforward to see that
$g$ satisfies
\begin{align}
\pa_t g&-\frac{\beta'}{\beta}\na_\xi\cdot(\xi\sqrt{\mu}g)\mu^{-\frac{1}{2}}
-\frac{2\ga\beta'}{\beta}g
-\al\na_\xi\cdot(A\xi\sqrt{\mu}g)\mu^{-\frac{1}{2}}+\beta^\ga L g\notag\\
=&-\al^{1-m} \beta^{2\ga}\pa_t G_1-\al^{2-m} \beta^{2\ga}\pa_t G_2+\al^{3-m}\beta^{2\ga}\beta_1w^{\ell_\infty}\na_\xi\cdot(\xi\mu)+ \al^{1-m}\beta'\beta^{2\ga-1}\na_\xi\cdot(\xi\sqrt{\mu}G_1)\mu^{-\frac{1}{2}}
\notag\\&+\al^{2-m}\beta'\beta^{2\ga-1}\na_\xi\cdot(\xi\sqrt{\mu}G_2)\mu^{-\frac{1}{2}}
+\al^{3-m}\beta^{2\ga}\na_\xi\cdot(A\xi\sqrt{\mu}G_2)\mu^{-\frac{1}{2}}+\al^{3-m}\beta^{3\ga} \{\Ga(G_1,G_2)+\Ga(G_2,G_1)\}\notag\\&+\al^{4-m}\beta^{3\ga}\Ga(G_2,G_2)
+\al\beta^{\ga}\{\Ga(G_1+\al G_2,g)+\Ga(g,G_1+\al G_2)\}+\al^{m}\beta^{-\ga}\Ga(g,g),\notag
\end{align}
with
\begin{align}
\sqrt{\mu}g(0,\xi)=G_{R,0}(\xi).\notag
\end{align}
Next let us define
\begin{align}
\FP_0 g&=\{a(t)+\Fb(t)\cdot v+c(t)(|\xi|^2-3)\}\sqrt{\mu},\notag \\
\bar{\FP}_0g_1&=\{a_1(t)+\Fb_1(t)\cdot \xi+c_1(t)(|\xi|^2-3)\}\mu,\notag
\end{align}
and
$$
\FP_0g_2=\{a_2(t)+\Fb_2(t)\cdot v+c_2(t)(|v|^2-3)\}\sqrt{\mu}.\notag
$$
Then it follows
\begin{align}
a(t)=a_1(t)+a_2(t),\ \Fb(t)=\Fb_1(t)+\Fb_2(t),\ c(t)=c_1(t)+c_2(t),\notag
\end{align}
for any $t\geq0$.

From \eqref{GR-con} and \eqref{GR-con-id}, it is straightforward to check
$$
\FP_0 g=0,
$$
hence
\begin{align}
a_1=-a_2,\ \Fb_1=-\Fb_2,\ c_1=-c_2.\notag
\end{align}
Thus, for $\geq\ell_2\geq2$, it follows
\begin{align}
|[a_2,\Fb_2,c_2](t)|\leq C\|w^{\ell_2} g_1(t)\|.\label{abc-es}
\end{align}

\noindent\underline{{\it Step 2. $L^2$ estimates for $\FP_1g_2$.}}
We now derive the $L^2$ estimate on $\FP_1g_2$.
Recall that $g_2$ satisfies
\begin{align}\label{f2-eq-2}
\pa_t g_2&-\frac{\beta'}{\beta}\na_\xi\cdot(\xi g_2)-\frac{2\ga\beta'}{\beta} g_2-\al\na_\xi\cdot(A\xi g_2)
+\beta^\ga Lg_2=\beta^\ga(1-\chi_M)\mu^{-\frac{1}{2}}\CK g_1.
\end{align}
Taking the inner product of \eqref{f2-eq-2} and $\FP_1g_2$ over $\R^3$, applying Cauchy-Schwarz's inequality and Lemmas \ref{Ga} and \ref{es-tri} as well as \eqref{abc-es},  we obtain
\begin{align}\label{g2-l2}
\frac{1}{2}\frac{d}{dt}\|\FP_1g_2\|^2
+\la\beta^\ga\|\FP_1g_2\|^2_\nu\leq& C\al^2|[a_2,\Fb_2,c_2]|^2
+C\beta^\ga\|w^{\ell_2}g_1\|^2\leq C\beta^\ga\|w^{\ell_2}g_1\|^2.
\end{align}

\noindent\underline{{\it Step 3. Higher order estimates for $\FP_1g_2$.}}
Since $\|\pa_\xi^\vth g_2\|\leq C|[a_2,\Fb_2,c_2]|+C\|\pa_\xi^\vth \FP_1g_2\|$, to obtain the higher order $L^2$ estimates on $g_2$,
it suffices to deduce the corresponding estimates on $\FP_1g_2$. For this, we first take $\FP_1$ projection of \eqref{f2-eq-2} to obtain
\begin{multline}\label{f2-eq-p1}
\pa_t\FP_1 g_2-\frac{(3+2\ga)\beta'}{\beta}\FP_1g_2
-\frac{\beta'}{\beta}[\xi\cdot\na_\xi(\FP_1 g_2+\FP_0 g_2)-\FP_0(\xi\cdot\na_\xi g_2)]
\\-\al[A\xi\cdot\na_\xi(\FP_1 g_2+\FP_0 g_2)-\FP_0(A\xi\cdot\na_\xi g_2)]+\beta^\ga L\FP_1g_2
=\beta^\ga\FP_1\left\{(1-\chi_M)\mu^{-\frac{1}{2}}\CK g_1\right\}.
\end{multline}
Then letting $1\leq|\vth|\leq N$, taking inner product of $\pa_\xi^\vth\eqref{f2-eq-p1}$ with $\pa_\xi^\vth\FP_1g_2$,
and applying Lemmas \ref{Ga}, \ref{es-L} and \ref{es-tri} as well as Cauchy-Schwarz's inequality, one has
\begin{align}\label{g2-l2-p2}
\sum\limits_{1\leq|\vth|\leq N}&\frac{d}{dt}\|\pa_\xi^{\vth}\FP_1g_2\|^2
+\la\sum\limits_{1\leq|\vth|\leq N}\beta^\ga\|\pa_\xi^{\vth}\FP_1g_2\|_\nu^2\notag\\
\leq& C\al^2\beta^{-\ga}|[a_2,\Fb_2,c_2]|^2
+C\beta^\ga\|\FP_1g_2\|_\nu^2
+C\sum\limits_{|\vth|\leq N}\beta^\ga\|w^{\ell_2}\pa_\xi^{\vth}g_1\|^2\notag\\
\leq& C\beta^\ga\|\FP_1g_2\|_\nu^2
+C\sum\limits_{|\vth|\leq N}\beta^\ga\|w^{\ell_2}\pa_\xi^{\vth}g_1\|^2,
\end{align}
where \eqref{abc-es} has been used again.

Next, combing \eqref{g2-l2} and \eqref{g2-l2-p2} gives
\begin{align}
\sum\limits_{|\vth|\leq N}\frac{d}{dt}\|\pa_\xi^{\vth}\FP_1g_2\|^2+\la\beta^\ga\sum\limits_{|\vth|\leq N}\|\pa_\xi^{\vth}\FP_1g_2\|^2_\nu
\leq C\beta^\ga\sum\limits_{|\vth|\leq N}\|w^{\ell_2}\pa_\xi^{\vth}g_1\|^2.\label{g2-l2-ly}
\end{align}

\noindent\underline{{\it Step 4. Weighted $H^N_\xi$ estimates for $g_1$.}} In this step, we intend to obtain the estimates of $\|w^{\ell_2}\pa^\vth_\xi g_1\|^2$
with $\ell_{\infty}\geq 2\ell_2\gg 5$ and $|\vth|\leq N$. Recall the following equations for $g_1$
\begin{align}\label{zg1-eq}
\pa_t g_{1}&-\frac{\beta'}{\beta}\na_\xi\cdot(\xi g_{1})
-\al\na_\xi\cdot(A\xi g_{1})-\frac{2\ga\beta'}{\beta}g_1+\beta^\ga \nu g_{1}\notag\\
=&\beta^\ga\chi_M \CK g_{1}-\frac{\beta'}{2\beta} |\xi|^2\sqrt{\mu}g_{2}-\frac{\al}{2}\xi\cdot(A\xi)\sqrt{\mu}g_{2}
-\al^{1-m}\sqrt{\mu}\beta^{2\ga}\pa_t G_1-\al^{2-m}\sqrt{\mu}\beta^{2\ga}\pa_t G_2\notag\\&+\al^{1-m}\beta'\beta^{2\ga-1}\na_\xi\cdot(\xi\sqrt{\mu}G_1)
+\al^{2-m}\beta'\beta^{2\ga-1}\na_\xi\cdot(\xi\sqrt{\mu}G_2)
+\al^{3-m}\beta^{2\ga}\na_\xi\cdot(A\xi\sqrt{\mu}G_2)
\notag\\&+\al^{3-m}\beta^{3\ga} \{Q(\sqrt{\mu}G_1,\sqrt{\mu}G_2)+Q(\sqrt{\mu}G_2,\sqrt{\mu}G_1)\}
+\al^{4-m}\beta^{3\ga}Q(\sqrt{\mu}G_2,\sqrt{\mu}G_2)
\notag\\&+\al^m\beta^{-\ga}Q(\sqrt{\mu}g,\sqrt{\mu}g)
+\al\beta^{\ga}\{Q(\sqrt{\mu}G_1+\al \sqrt{\mu}G_2,\sqrt{\mu}g)+Q(\sqrt{\mu}g,\sqrt{\mu}G_1+\al \sqrt{\mu}G_2)\},
\end{align}
\begin{align}
g_{1}(0,\xi)=G_{R,0}.\notag
\end{align}
Next, taking the inner product of $\pa_\xi^\vth\eqref{zg1-eq}$ and $w^{2\ell_2}\pa_\xi^\vth g_1$ and using Cauchy-Schwarz's inequality, one has
\begin{align}\label{zg1-l2-p1}
\frac{d}{dt}&\|w^{\ell_2}\pa_\xi^\vth g_{1}\|^2+\la\beta^\ga \|w^{\ell_2}\pa_\xi^\vth g_{1}\|^2_\nu\notag\\
\leq& C\left(\al^2+|\frac{\beta'}{\beta}|^2\right)\beta^{-\ga}\sum\limits_{\vth'\leq \vth}\|\pa_\xi^{\vth'} g_2\|^2
+C\al^{2-2m}\beta^{3\ga}\sum\limits_{\vth'\leq \vth}\|\pa_\xi^{\vth'}[\pa_t G_1,\pa_t G_2]\|^2
\notag\\&+C\al^{2-2m}|\beta'|^2\beta^{3\ga-2}\sum\limits_{\vth'\leq \vth}\|\pa_\xi^{\vth'}[G_1,G_2,\na_\xi G_1,\na_\xi G_2]\|^2
+C\al^{6-2m}\beta^{3\ga}\sum\limits_{\vth'\leq \vth}\|\pa_\xi^{\vth'}[G_2,\na_\xi G_2]\|^2
\notag\\&
+\beta^\ga|(\pa_\xi^\vth(\chi_M \CK g_{1}),w^{2\ell_2}\pa_\xi^\vth g_{1})|
+\al^{3-m}\beta^{3\ga}|(\pa_\xi^\vth\{Q(\sqrt{\mu}G_1,\sqrt{\mu}G_2)+Q(\sqrt{\mu}G_2,\sqrt{\mu}G_1)\},w^{2\ell_2}\pa_\xi^{\vth}g_{1})|
\notag\\&+\al^{4-m}\beta^{3\ga}|(\pa_\xi^{\vth}Q(\sqrt{\mu}G_2,\sqrt{\mu}G_2),w^{2\ell_2}\pa_\xi^{\vth}g_{1})|
+\al^m\beta^{-\ga} |(\pa_\xi^{\vth}Q(g_1+\sqrt{\mu}g_2,g_1+\sqrt{\mu}g_2),w^{2\ell_2}\pa_\xi^{\vth}g_{1})|\notag\\
&+\al\beta^\ga|(\pa_\xi^{\vth}\{Q(\sqrt{\mu}G_1+\al \sqrt{\mu}G_2,g_1+\sqrt{\mu}g_2)+Q(g_1+\sqrt{\mu}g_2,\sqrt{\mu}G_1+\al \sqrt{\mu}G_2)\},w^{2\ell_2}\pa_\xi^{\vth}g_{1})|.
\end{align}
We now turn to compute the right hand side of \eqref{zg1-l2-p1} term by term. First of all,
in light of \eqref{bbp}, one has
\begin{align}
\left(\al^2+|\frac{\beta'}{\beta}|^2\right)\beta^{-\ga}\sum\limits_{\vth'\leq \vth}\|\pa_\xi^{\vth'} g_2\|^2\leq
C\al^2\sum\limits_{\vth'\leq \vth}\|\pa_\xi^{\vth'} g_2\|^2
\leq
C\al^2\sum\limits_{\vth'\leq \vth}\|\pa_\xi^{\vth'} \FP_1g_2\|^2+C\al^2\|w^{\ell_2}g_{1}\|^2,\notag
\end{align}
where \eqref{abc-es} has been used in the last inequality,
and \eqref{bbp} together with \eqref{G1-es} and \eqref{G2-es} gives
\begin{align}
\al^{2-2m}|\beta'|^2\beta^{3\ga-2}\sum\limits_{\vth'\leq \vth}\|\pa_\xi^{\vth'}[G_1,G_2,\na_\xi G_1,\na_\xi G_2]\|^2
\leq C\al^{6-2m}.\notag
\end{align}
Moreover, \eqref{G1-es} and \eqref{G2-es} with $k=1$ also imply
\begin{align}
\al^{2-2m}\beta^{3\ga}\sum\limits_{\vth'\leq \vth}\|\pa_\xi^{\vth'}[\pa_t G_1,\pa_t G_2]\|^2\leq C\al^{6-2m},\notag
\end{align}
and
\begin{align}
\al^{6-2m}\beta^{3\ga}\sum\limits_{\vth'\leq \vth}\|\pa_\xi^{\vth'}[G_2,\na_\xi G_2]\|^2\leq C\al^{6-2m}.\notag
\end{align}
Furthermore, employing Proposition \ref{CK-l2-pro} and Cauchy-Schwarz's inequality, we have
\begin{align}
|(\pa_\xi^{\vth}(\chi_M \CK g_{1}),w^{2\ell_2}\pa_\xi^{\vth}g_{1})|\leq C_\eta\{(1+M)^{-\ga}+\varsigma\}\sum\limits_{\vth'\leq \vth}\|w^{\ell_2}\pa_\xi^{\vth'}g_{1}\|^2_\nu+\eta\|w^{\ell_2}\pa_\xi^{\vth}g_{1}\|^2_\nu.\notag
\end{align}
Next, Lemma \ref{es-tri}, \eqref{G1-es}, \eqref{G2-es}  and \eqref{abc-es} as well as the {\it a priori} assumption \eqref{ap-as} give
\begin{align}
\al^{3-m}\beta^{3\ga} &
|(\pa_\xi^{\vth}\{Q(\sqrt{\mu}G_1,\sqrt{\mu}G_2)+Q(\sqrt{\mu}G_2,\sqrt{\mu}G_1)\},w^{2\ell_2}\pa_\xi^{\vth}g_{1})|\notag\\
\leq& C\al^{3-m}\beta^{3\ga}\|w^{\ell_\infty}\pa_\xi^{\vth}g_{1}\|_{L^\infty}
\sum\limits_{\vth'\leq\vth}\|w^{\ell_2}\pa_\xi^{\vth'}G_{1}\|_\nu\|w^{\ell_2}\pa_\xi^{\vth'}G_{2}\|_\nu\notag\\
\leq& C_\eta\al^{6-2m}+\eta\|w^{\ell_\infty}\pa_\xi^{\vth}g_{1}\|_{L^\infty}^2,\notag
\end{align}
\begin{align}
\al^{4-m}\beta^{3\ga}&
|(\pa_\xi^{\vth}Q(\sqrt{\mu}G_2,\sqrt{\mu}G_2),w^{2\ell_2}\pa_\xi^{\vth}g_{1})|\notag\\
\leq& C\al^{4-m}\beta^{3\ga}\|w^{\ell_\infty}\pa_\xi^{\vth}g_{1}\|_{L^\infty}\sum\limits_{\vth'\leq\vth}\|w^{\ell_2}\pa_\xi^{\vth'}G_{2}\|_\nu^2
\leq C\al^{6-2m}+C\al^2\|w^{\ell_\infty}\pa_\xi^{\vth}g_{1}\|_{L^\infty}^2,\notag
\end{align}
\begin{align}
\al^{m}\beta^{-\ga}|(\pa_\xi^{\vth}&Q(g_1+\sqrt{\mu}g_2,g_1+\sqrt{\mu}g_2),w^{2\ell_2}\pa_\xi^{\vth}g_{1})|\notag\\
\leq& C\al^m\|w^{\ell_\infty}\pa_\xi^{\vth}g_{1}\|_{L^\infty}
\left\{\sum\limits_{\vth'\leq\vth}\|w^{\ell_2}\pa_\xi^{\vth'}g_{1}\|_\nu
+\sum\limits_{\vth'\leq\vth}\|\pa_\xi^{\vth'}g_{2}\|\right\}^2
\notag\\
\leq& C\al^m\sum\limits_{\vth'\leq\vth}\|w^{\ell_2}\pa_\xi^{\vth'}g_{1}\|^2_\nu
+C\al^m\sum\limits_{\vth'\leq\vth}\|\pa_\xi^{\vth'}\FP_1g_{2}\|_\nu^2
+C\al^m\|w^{\ell_2}g_{1}\|^2,\notag
\end{align}
and
\begin{align}
\al\beta^{\ga}&|(\pa_\xi^{\vth}\{Q(\sqrt{\mu}G_1+\al \sqrt{\mu}G_2,g_1+\sqrt{\mu}g_2)+Q(g_1+\sqrt{\mu}g_2,\sqrt{\mu}G_1+\al \sqrt{\mu}G_2)\},w^{2\ell_2}\pa_\xi^{\vth}g_{1})|\notag\\
\leq& C\al\beta^{\ga}\|w^{\ell_\infty}\pa_\xi^{\vth}g_{1}\|_{L^\infty} \left\{\sum\limits_{\vth'\leq\vth}\|w^{\ell_2}\pa_\xi^{\vth'}g_{1}\|_\nu
+\sum\limits_{\vth'\leq\vth}\|\pa_\xi^{\vth'}g_{2}\|\right\} \sum\limits_{\vth'\leq\vth}\|w^{\ell_2}\pa_\xi^{\vth'}\{\sqrt{\mu}[G_1,G_{2}]\}\|_\nu\notag\\
\leq& \eta\left\{\sum\limits_{\vth'\leq\vth}\|w^{\ell_2}\pa_\xi^{\vth'}g_{1}\|_\nu^2+\sum\limits_{\vth'\leq\vth}\|\pa_\xi^{\vth'}\FP_1g_{2}\|_\nu^2
+\|w^{\ell_2}g_{1}\|^2\right\}+C_\eta\al^2.\notag
\end{align}
Plugging the above estimates into \eqref{zg1-l2-p1} and adjusting constants, we obtain for $m\in(2,3)$
\begin{multline}\label{zg1-l2-p2}
\frac{d}{dt}\sum\limits_{\vth \leq N}\|w^{\ell_2}\pa_\xi^{\vth}g_{1}\|^2+\la\beta^\ga \sum\limits_{\vth \leq N}\|w^{\ell_2}\pa_\xi^{\vth}g_{1}\|^2_\nu\\
\leq C(\al^2+\eta)\sum\limits_{\vth \leq N}\|\pa_\xi^{\vth}\FP_1g_2\|^2+\eta\|w^{\ell_\infty}\pa_\xi^{\vth}g_{1}\|_{L^\infty}^2
+C\al^{6-2m}.
\end{multline}
Consequently, one gets from \eqref{g2-l2-ly} and \eqref{zg1-l2-p2} that
\begin{align}
\frac{d}{dt}&\sum\limits_{\vth \leq N}\|\pa_\xi^{\vth}\FP_1g_2\|^2+
\frac{d}{dt}\sum\limits_{\vth \leq N}\|w^{\ell_2}\pa_\xi^{\vth}g_{1}\|^2
+\la\beta^\ga \sum\limits_{\vth \leq N}\|w^{\ell_2}\pa_\xi^{\vth}g_{1}\|^2_\nu
+\la\beta^\ga\sum\limits_{\vth \leq N}\|\pa_\xi^{\vth}\FP_1g_2\|^2_\nu\notag\\
\leq &\eta\|w^{\ell_\infty}\pa_\xi^{\vth}g_{1}\|_{L^\infty}^2+C\al^{6-2m},\notag
\end{align}
which together with \eqref{abc-es} further implies \eqref{l2-final}. This then ends the proof of Lemma \ref{g12-l2-lem}.
\end{proof}

\subsection{Proof of Theorem \ref{mth}}
We are ready to complete the proof of Theorem \ref{mth}. In fact, as explained at the beginning of this section, it suffices to prove \eqref{ap-es} under the assumption \eqref{ap-as}. Indeed, combing \eqref{g1k-es2}, \eqref{g2k-es2} and \eqref{l2-final} together, we conclude that
\begin{multline}
\sup\limits_{0\leq s\leq t}\sum\limits_{|\vth|\leq N}\|w^{\ell_\infty}\pa_\xi^{\vth}g_{1}(s)\|_{L^\infty}
+\sup\limits_{0\leq s\leq t}\sum\limits_{|\vth|\leq N}\|w^{\ell_\infty}\pa_\xi^{\vth}g_{2}(s)\|_{L^\infty}\\
\leq C\sum\limits_{|\vth|\leq N}\|w^{\ell_\infty}\pa_\xi^{\vth}G_{R,0}\|_{L^\infty}+C\al^{3-m},\notag
\end{multline}
which in turn makes the assumption \eqref{ap-as} close.
The above estimate further gives
\begin{align}\label{g1-g2-sum-p2}
&\sum\limits_{|\vth|\leq N}\|w^{\ell_\infty}\pa_\xi^{\vth}(\sqrt{\mu}G_{R})(t)\|_{L^\infty}\notag\\
&\leq C\beta^{-2\ga}(t)\left\{\sum\limits_{|\vth|\leq N}\|w^{\ell_\infty}\pa_\xi^{\vth}G_{R,0}\|_{L^\infty}+C\al^{3-m}\right\}
\notag\\ 
&\leq C\beta^{-2\ga}(t)\al^{-m}\sum\limits_{|\vth|\leq N}\|w^{\ell_\infty}\pa_\xi^{\vth}[F_0(v)-(\mu+\sqrt{\mu}\{\al G_1(0,v)+\al^2 G_2(0,v)\})]\|_{L^\infty}\notag\\
&\quad+C\beta^{-2\ga}(t)\al^{3-m},
\end{align}
for any $t\geq0$,
according to $\sqrt{\mu}G_{R}=\beta^{-2\ga}(g_1+\sqrt{\mu}g_2)$
and
$$
G_{R,0}(\xi)=G_{R,0}(v)=\al^{-m}\left\{F_0(v)-[\mu+\sqrt{\mu}\{\al G_1(0,v)+\al^2 G_2(0,v)\}]\right\}.
$$
In addition, from \eqref{F-G}, it follows
\begin{align}\label{F-G-2}
G(t,\xi)=\beta^3F(t,\beta \xi).
\end{align}
Finally, \eqref{sol-decay} follows from \eqref{bet-eq} and \eqref{F-G-2} together with \eqref{g1-g2-sum-p2} by renaming the velocity variable. Moreover,  since $\beta(t)\to \infty$ as $t\to \infty$, it follows  from \eqref{eq.betaR} and \eqref{g1-g2-sum-p2} that 
\begin{equation}\notag
\lim\limits_{t\to\infty}\frac{\beta^\gamma}{1+\ga \varrho_0\al^2 t}=\lim\limits_{t\to\infty}\frac{\beta^{\gamma-1}\beta'}{\varrho_0\al^2}
=1+\lim\limits_{t\to\infty}\left[\frac{\vho_1}{\vho_0}\al\beta^{-\ga}-\frac{\al^{m-1}}{3\vho_0}\int_{\R^3}\xi\cdot A\xi \beta^{\ga} (\sqrt{\mu}G_R)\,d\xi\right]=1,
\end{equation} 
which proves \eqref{sol.beta}. This ends the proof of Theorem \ref{mth}.

\section{Appendix}\label{pre-sec}

In this section, we provide those estimates that have been used in the previous sections. In particular, we give the basic estimates on the linearized operator $L$ as well as the nonlinear operators $\Ga$ and $Q$, and also present a key estimate for the operator $\CK$ in the case of hard potentials.

The following lemma is concerned with the integral operator $K$ given by \eqref{sp.L}, and its proof in case of the hard sphere model $(\ga=1)$ has been given by \cite[Lemma 3, pp.727]{Guo-2010}.
\begin{lemma}\label{Kop}
Let $K$ be defined as \eqref{sp.L}, then it holds that
\begin{align}\label{K-sp}
Kf(\xi)=K_2f(\xi)-K_1f(\xi)=\int_{\R^3}(\Fk_2(\xi,\xi_\ast)-\Fk_1(\xi,\xi_\ast))f(\xi_\ast)\,d\xi_\ast
\end{align}
with
\begin{equation*}
\Fk_1(\xi,\xi_\ast)=\tilde{C}_1|\xi-\xi_\ast|^\ga e^{-\frac{|\xi|^2+|\xi_\ast|^{2}}{4}}, 
\end{equation*}
and
\begin{equation*}
\Fk_2(\xi,\xi_\ast)= \tilde{C}_2|\xi-\xi_\ast|^{-2+\ga}
e^{-\frac{1}{8}|\xi-\xi_\ast|^{2}-\frac{1}{8}\frac{\left||\xi|^{2}-|\xi_\ast|^{2}\right|^{2}}{|\xi-\xi_\ast|^{2}}}. 
\end{equation*}
Here both $\tilde{C}_1$ and $\tilde{C}_2$ are positive constants.

In addition, let
\begin{align}
\Fk(\xi,\xi_\ast)=\Fk_2(\xi,\xi_\ast)-\Fk_1(\xi,\xi_\ast),\
\Fk_w(\xi,\xi_\ast)=w^{\ell}(\xi)\Fk(\xi,\xi_\ast)w^{-\ell}(\xi_\ast)\notag
\end{align}
with  $\ell\geq0$,
then it also holds that
\begin{equation}
\int_{\R^3} \Fk_w(\xi,\xi_\ast)e^{\frac{\varepsilon|\xi-\xi_\ast|^2}{8}}dv_\ast\leq \frac{C}{1+|\xi|},\label{K-decay}
\end{equation}
for $\varepsilon=0$ or any $\varepsilon> 0$ small enough.

Moreover, for any multi-indices $\vth$ and any $\ell\geq0$ it holds that
\begin{align}\label{K-wif}
|w^\ell\pa_\xi^\vth (Kf)|\leq C\sum\limits_{\vth'\leq\vth}\|w^\ell \pa_\xi^{\vth'}f\|_{L^\infty}.
\end{align}

\end{lemma}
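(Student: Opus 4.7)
The representation \eqref{K-sp} is derived by splitting $Kf = K_2 f - K_1 f$ along the loss and gain parts of the operators appearing in the definition \eqref{sp.L}. For $K_1 f = \mu^{-1/2} Q_{\rm loss}(\mu^{1/2}f,\mu) + \mu^{-1/2}Q(\mu^{1/2}f,\mu)_{\rm loss\ part}$ (coming from the loss part of $Q_{\rm gain}(\mu,\mu^{1/2}f)$ after using $\mu(\xi')\mu(\xi_\ast')=\mu(\xi)\mu(\xi_\ast)$), a direct calculation using $\int_{\S^2} B_0\,d\om$ being bounded together with the identity $\mu^{-1/2}(\xi)\mu(\xi)\mu^{1/2}(\xi_\ast)=e^{-(|\xi|^2+|\xi_\ast|^2)/4}$ gives the explicit kernel $\Fk_1$. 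For $K_2 f$ one performs the classical Carleman change of variables from $(\xi_\ast,\om)\in\R^3\times\S^2$ to $(\xi',\xi_\ast')$: splitting $\xi_\ast-\xi$ into components parallel and perpendicular to $\om$, one finds that the integration in $\xi_\ast'$ is over the hyperplane orthogonal to $\xi-\xi'$, which produces the Jacobian factor $|\xi-\xi_\ast|^{-2+\gamma}$ after absorbing $|v-v_*|^{\gamma}$ and $B_0\leq C|\cos\theta|$. The resulting Gaussian $\mu(\xi_\ast')\mu^{1/2}(\xi')\mu^{-1/2}(\xi)$ is then rewritten, using the conservation laws $|\xi|^2+|\xi_\ast|^2=|\xi'|^2+|\xi_\ast'|^2$, into the form $\exp\{-\tfrac18|\xi-\xi_\ast|^2 -\tfrac18||\xi|^2-|\xi_\ast|^2|^2/|\xi-\xi_\ast|^2\}$ after completing the square; see Guo's argument in \cite{Guo-2010} for $\ga=1$, which extends verbatim to $\ga\in(0,1]$ since the $|\xi-\xi_\ast|^{\ga-2}$ singularity is still integrable near the diagonal in $\R^3$.

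For \eqref{K-decay}, I first observe that the weight ratio $w^\ell(\xi)/w^\ell(\xi_\ast)$ is controlled by $e^{\eps_0(|\xi|^2-|\xi_\ast|^2)}$ for $\eps_0$ arbitrarily small. Combined with the exponent in $\Fk_2$, I obtain an upper bound
\begin{equation*}
\Fk_{w,2}(\xi,\xi_\ast)e^{\frac{\eps|\xi-\xi_\ast|^2}{8}}
\leq C|\xi-\xi_\ast|^{\ga-2}\exp\Bigl\{-\tfrac{1-8\eps}{8}|\xi-\xi_\ast|^2-\tfrac{1-c\eps_0}{8}\tfrac{||\xi|^2-|\xi_\ast|^2|^2}{|\xi-\xi_\ast|^2}\Bigr\},
\end{equation*}
and similarly for $\Fk_{w,1}$. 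Writing $\xi_\ast=\xi+r\sigma$ with $r=|\xi-\xi_\ast|$ and $\sigma\in\S^2$, the exponent becomes $-\tfrac{c_1}{8}r^2-\tfrac{c_2}{8}(2\xi\cdot\sigma+r)^2$; integrating in $\sigma\in\S^2$ gives a factor of order $1/(1+|\xi|)$ from the second Gaussian, and the remaining $r$-integral $\int_0^\infty r^{\ga}e^{-c_1 r^2/8}\,dr$ is finite. This yields \eqref{K-decay}.

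For the derivative estimate \eqref{K-wif}, I use the same strategy as in the proof of Proposition \ref{CK-l2-pro}: by performing the substitution $u=\xi_\ast-\xi$ before differentiating, one has
\begin{equation*}
\pa_\xi^\vth(Kf)(\xi)=\sum_{\vth'+\vth''\le\vth}C^{\vth',\vth''}_\vth
\int_{\R^3\times\S^2}B_0|u|^\ga(\pa^{\vth''}\mu)(\xi+u_\perp)(\pa^{\vth'}f)(\xi+u_\parallel)\,d\om du +\cdots,
\end{equation*}
plus the two analogous contributions where $\mu$-derivatives and $f$-derivatives are swapped between the primed variables. Changing back to $(\xi_\ast,\om)$ puts $\pa_\xi^\vth(Kf)$ in the same structural form as $Kf$ itself but with $\pa^{\vth'}f$ in place of $f$ (and with $\mu$ replaced by $\pa^{\vth''}\mu$, which is still dominated by $\mu^{1/2}$). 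Applying \eqref{K-decay} to each term then yields $|w^\ell \pa_\xi^\vth(Kf)|\le C\sum_{\vth'\le\vth}\|w^\ell\pa_\xi^{\vth'}f\|_{L^\infty}$.

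\textbf{Main obstacle.} The technical heart is the Carleman reduction used to derive the kernel $\Fk_2$ in the hard-potential range. Unlike the hard-sphere case treated in \cite{Guo-2010}, the Jacobian $|\xi-\xi_\ast|^{\ga-2}$ has a nontrivial singularity on the diagonal when $\ga<1$, so verifying that the representation is well-defined and that integrability in \eqref{K-decay} survives (despite the fact that near $|\xi-\xi_\ast|\to 0$ the exponential suppression becomes ineffective) requires being careful: one must explicitly bound $\int_0^1 r^{\ga}\cdot r^{-2}\cdot r^2\,dr = \int_0^1 r^{\ga}\,dr<\infty$ coming from the Jacobian of polar coordinates, which closes the estimate.
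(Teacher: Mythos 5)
Your treatment of the first two assertions is consistent with what the paper does: it does not prove \eqref{K-sp} or \eqref{K-decay} at all (they are quoted as classical, citing Guo's hard-sphere computation), and your sketches — the Grad/Carleman reduction producing the kernel $\Fk_2$, the absorption of the weight ratio into the exponent, and the spherical-band estimate $\int_{\S^2}e^{-c(2\xi\cdot\sigma+r)^2}\,d\sigma\lesssim (1+|\xi|)^{-1}$ followed by the radial integral $\int_0^\infty r^{\ga}e^{-cr^2}\,dr<\infty$ — are exactly the standard arguments behind those quoted facts. For \eqref{K-wif}, your overall strategy (translate $\xi_\ast-\xi\to u$ before differentiating, then reduce back to a kernel estimate of the type \eqref{K-decay}) is also the paper's.

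There is, however, a concrete gap in the way you execute the derivative step. The identity you display, $\pa_\xi^\vth(Kf)=\sum C\int B_0|u|^\ga(\pa^{\vth''}\mu)(\xi+u_\perp)(\pa^{\vth'}f)(\xi+u_\parallel)\,d\om\,du+\cdots$, is the expansion of $\CK$ (compare \eqref{CK-de-exp-p1} and \eqref{sp.cL}), not of $K$: by \eqref{sp.L}, $Kf=\mu^{-1/2}\{Q(\mu^{1/2}f,\mu)+Q_{\mathrm{gain}}(\mu,\mu^{1/2}f)\}$, so in the $u$-variables the integrand carries the additional factors $\mu^{-1/2}(\xi)$ outside and $\mu^{1/2}(\xi+u_\parallel)$ attached to $f$. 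If you apply Leibniz to these factors separately, the terms in which derivatives fall on the stand-alone $\mu^{-1/2}(\xi)$ come with a factor $p(\xi)\mu^{-1/2}(\xi)$, $|p(\xi)|\sim(1+|\xi|)^{|\vth|}$, multiplying an otherwise undifferentiated gain term; after the reduction the only decay available is the $(1+|\xi|)^{-1}$ of \eqref{K-decay}, which absorbs one power of $|\xi|$ but not two, so the scheme ``replace $\mu$ by $\pa\mu\lesssim\mu^{1/2}$ and apply \eqref{K-decay} termwise'' genuinely fails for $|\vth|\geq 2$ (the missing boundedness is recovered only through cancellation between the Leibniz terms). The repair is precisely what the paper does, and what your part on \eqref{K-sp} already makes available: combine the Maxwellians first via the collision identity, e.g. $\mu^{-1/2}(\xi)\mu(\xi+u_\perp)\mu^{1/2}(\xi+u_\parallel)=e^{-|\xi+u_\perp|^2/4}e^{-|\xi+u|^2/4}$, or equivalently differentiate the explicit kernels $\Fk_1,\Fk_2$ written at $\xi_\ast=\xi+u$. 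Then each $\xi$-derivative only brings down quantities such as $2\xi+u$ or $\bigl(|u|^2+2u\cdot\xi\bigr)/|u|$, which are dominated pointwise by the same Gaussian factors with degraded constants, and \eqref{K-wif} follows from the \eqref{K-decay}-type integral bound applied to the differentiated kernels. With that modification your argument coincides with the paper's proof.
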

\begin{proof}
We prove \eqref{K-wif} only, since the other statements in the Lemma is well known.
By \eqref{K-sp} and a change of variables $\xi_\ast-\xi\rightarrow u$, we have
\begin{align}
\pa_\xi^\vth (K_1f)=&\tilde{C}_1\sum\limits_{\vth'\leq\vth}C_\vth^{\vth'}\int_{\R^3}|u|^\ga\pa_\xi^{\vth'}
\left\{e^{-\frac{|\xi|^2+|u+\xi|^{2}}{4}}\right\}\pa_\xi^{\vth-\vth'}f(u+\xi)du\notag\\
=&\tilde{C}_1\sum\limits_{\vth'\leq\vth}C_\vth^{\vth'}\int_{\R^3}
\tilde{\Fk}_1(\xi,\xi_\ast)(\pa_\xi^{\vth-\vth'}f)(\xi_\ast)du,\notag
\end{align}
and
\begin{align}
\pa_\xi^\vth (K_2f)=&\tilde{C}_1\sum\limits_{\vth'\leq\vth}C_\vth^{\vth'}\int_{\R^3}|u|^{-2+\ga}\pa_\xi^{\vth'}
\left\{e^{-\frac{1}{8}|u|^{2}-\frac{1}{8}\frac{\left||\xi|^{2}-|u+\xi|^{2}\right|^{2}}{|u|^{2}}}\right\}
\pa_\xi^{\vth-\vth'}f(u+\xi)du\notag\\
=&\tilde{C}_1\sum\limits_{\vth'\leq\vth}C_\vth^{\vth'}\int_{\R^3}
\tilde{\Fk}_2(\xi,\xi_\ast)(\pa_\xi^{\vth-\vth'}f)(\xi_\ast)du,\notag
\end{align}
where
\begin{align}
\tilde{\Fk}_1(\xi,\xi_\ast)=|\xi-\xi_\ast|^\ga\left(\pa_\xi^{\vth'}
\left\{e^{-\frac{|\xi|^2+|u+\xi|^{2}}{4}}\right\}\right)\bigg|_{u=\xi-\xi_\ast},\notag
\end{align}
and
\begin{align}
\tilde{\Fk}_2(\xi,\xi_\ast)=|\xi-\xi_\ast|^{-2+\ga}\left(\pa_\xi^{\vth'}
\left\{e^{-\frac{1}{8}|u|^{2}-\frac{1}{8}\frac{\left||\xi|^{2}-|u+\xi|^{2}\right|^{2}}{|u|^{2}}}\right\}\right)\bigg|_{u=\xi-\xi_\ast}.\notag
\end{align}
Furthermore, it is direct to see
\begin{align}
|\tilde{\Fk}_1(\xi,\xi_\ast)|\leq C(\vth)|\xi-\xi_\ast|^\ga e^{-\frac{|\xi|^2+|\xi_\ast|^{2}}{8}},\notag
\end{align}
and
\begin{align}
|\tilde{\Fk}_2(\xi,\xi_\ast)|\leq C(\vth)|\xi-\xi_\ast|^{-2+\ga}
e^{-\frac{1}{16}|\xi-\xi_\ast|^{2}-\frac{1}{16}\frac{\left||\xi|^{2}-|\xi_\ast|^{2}\right|^{2}}{|\xi-\xi_\ast|^{2}}}.\notag
\end{align}
Then performing the similar calculation as for obtaining \eqref{K-decay}, one sees that \eqref{K-wif} is true. This completes the proof of Lemma \ref{Kop}.
\end{proof}

For the weighted velocity derivative estimates on the nonlinear operator $\Ga$, one has the following result.

\begin{lemma}\label{Ga}
Let $0\leq \ga\leq 1$ and $\ta\in[0,1]$. For any $p\in[1,+\infty]$ and any $\ell\geq0$, it holds that
\begin{align}
\|w^\ell\nu^{-\ta}\pa_\xi^\vth\Ga(f,g)\|_{L_\xi^p}\leq C\sum\limits_{\vth'+\vth''\leq \vth}
\left\{\|w^\ell\nu^{1-\ta}\pa_\xi^{\vth'}f\|_{L_\xi^p}\|\pa_\xi^{\vth''}g\|_{L_\xi^p}+\|\pa_\xi^{\vth'}f\|_{L_\xi^p}
\|w^\ell\nu^{1-\ta}\pa^{\vth''}_\xi g\|_{L_\xi^p}\right\}.\notag
\end{align}

\end{lemma}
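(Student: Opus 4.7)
\medskip

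The proof is a standard bilinear estimate on Grad-cutoff collision operators. First I would split $\Gamma=\Gamma_+-\Gamma_-$ into gain and loss parts,
\begin{equation*}
\Gamma_+(f,g)(\xi)=\int_{\R^3\times\S^2}B\sqrt{\mu(\xi_*)}f(\xi_*')g(\xi')\,d\omega d\xi_*,\qquad \Gamma_-(f,g)(\xi)=g(\xi)\nu_f(\xi),
\end{equation*}
with $\nu_f(\xi)=\int B\sqrt{\mu(\xi_*)}f(\xi_*)\,d\omega d\xi_*$. The key step for derivatives is the shift $u=\xi_*-\xi$, under which $\xi'=\xi+(u\cdot\omega)\omega$ and $\xi_*'=\xi+u-(u\cdot\omega)\omega$ depend affinely on $\xi$ (with identity Jacobian in $\xi$) while the kernel $|u|^\ga B_0$ is $\xi$-independent. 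Applying Leibniz and reverting to $\xi_*$-variables yields, for the gain term,
\begin{equation*}
\pa_\xi^\vth\Gamma_+(f,g)(\xi)=\sum_{\vth_1+\vth_2+\vth_3=\vth}C^\vth_{\vth_1\vth_2\vth_3}\int B|\xi-\xi_*|^\ga(\pa^{\vth_1}\sqrt{\mu})(\xi_*)(\pa^{\vth_2}f)(\xi_*')(\pa^{\vth_3}g)(\xi')\,d\omega d\xi_*,
\end{equation*}
and a similar formula for $\Gamma_-$ with $(\xi_*',\xi')$ replaced by $(\xi_*,\xi)$. Since $|\pa^{\vth_1}\sqrt{\mu}(\xi_*)|\lesssim \mu^{1/4}(\xi_*)$, the polynomial prefactor is absorbed by the exponential decay in $\xi_*$.

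\medskip

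For the $L^p$ bound on the gain piece, I would apply H\"older's inequality to the finite nonnegative measure $d\sigma_\xi:=B\mu^{1/4}(\xi_*)\,d\omega d\xi_*$, whose total mass is $\lesssim\nu(\xi)$: writing $|\pa_\xi^\vth\Gamma_+|\lesssim\sum\int F(\xi_*')G(\xi')\,d\sigma_\xi$ gives, after raising to the $p$-th power,
\begin{equation*}
|\pa_\xi^\vth\Gamma_+(f,g)(\xi)|^p\lesssim\nu(\xi)^{p-1}\sum_{\vth_1+\vth_2+\vth_3=\vth}\int B\mu^{1/4}(\xi_*)|\pa^{\vth_2}f(\xi_*')|^p|\pa^{\vth_3}g(\xi')|^p\,d\omega d\xi_*,
\end{equation*}
with the standard pointwise reading for $p=\infty$. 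Multiplying by $w^{\ell p}(\xi)\nu^{-\ta p}(\xi)$, I use the conservation inequality $\langle\xi\rangle\leq\langle\xi'\rangle\langle\xi_*'\rangle$ to split $w^\ell(\xi)\lesssim w^\ell(\xi')w^\ell(\xi_*')$, together with $|\xi-\xi_*|^\ga=|\xi'-\xi_*'|^\ga\lesssim \langle\xi'\rangle^\ga+\langle\xi_*'\rangle^\ga$ and $\nu(\xi)\lesssim\nu(\xi')+\nu(\xi_*')$, so that one factor of $\nu^{1-\ta}$ can be placed on either $f$ or $g$. The change of variables $(\xi,\xi_*)\leftrightarrow(\xi',\xi_*')$ has unit Jacobian and turns the $\xi$-integration into an integration in $\xi'$ (with $\xi_*'$ playing the role of a parameter carrying exponential Gaussian decay from the remaining $\mu^{1/4}(\xi_*)$). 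Fubini then separates the $\xi'$-integral from the $\xi_*'$-integral and delivers the claimed product of $L^p$-norms.

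\medskip

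For the loss term, $\Gamma_-(f,g)(\xi)=g(\xi)\nu_f(\xi)$, after transferring $\xi$-derivatives via the same shift $u=\xi_*-\xi$ onto $f$ and onto $\sqrt{\mu}$, one gets a sum of terms of the form $\pa^{\vth_3}g(\xi)\cdot\int B|\xi-\xi_*|^\ga\mu^{1/4}(\xi_*)\pa^{\vth_2}f(\xi_*)\,d\omega d\xi_*$. The Young-type estimate for the inner integral against the Gaussian $\mu^{1/4}$ gives the pointwise bound $\nu(\xi)\|\pa^{\vth_2}f\|_{L^p}$ for $p\in[1,\infty]$ via H\"older, since $\|\langle\xi_*\rangle^\ga\mu^{1/4}(\xi_*)\|_{L^{p'}}$ is finite. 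Multiplication by $w^\ell\nu^{-\ta}g$ and taking $L^p$ yields the desired contribution $\|\pa^{\vth_2}f\|_{L^p}\|w^\ell\nu^{1-\ta}\pa^{\vth_3}g\|_{L^p}$.

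\medskip

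The main obstacle is the $L^p\times L^p\to L^p$ structure for general $p\in[1,\infty]$ on $\Gamma_+$: the product $f(\xi_*')g(\xi')$ cannot be decoupled by a simple pointwise H\"older argument when $p<\infty$, since neither translated factor is independently in $L^\infty$. The decoupling works only because the H\"older-$p$-th-power trick against the finite measure $d\sigma_\xi$ (whose mass $\sim\nu(\xi)$ accounts for the lost $\nu$) is followed by the unit-Jacobian exchange $(\xi,\xi_*)\leftrightarrow(\xi',\xi_*')$, which is what allows the separation of $\pa f$ and $\pa g$ into independent $L^p$-norms. Everything else is bookkeeping of the weight split and the Leibniz sum.
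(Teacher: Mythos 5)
The paper actually quotes Lemma \ref{Ga} without proof (it is a standard cutoff estimate), so your argument has to stand on its own. Its architecture is the right one: Leibniz through the shift $u=\xi_\ast-\xi$, the bound $|\pa_\xi^{\vth_1}\sqrt\mu(\xi_\ast)|\lesssim\mu^{1/4}(\xi_\ast)$, H\"older against the measure $B_0|\xi-\xi_\ast|^\ga\mu^{1/4}(\xi_\ast)\,d\om d\xi_\ast$ of total mass $\sim\nu(\xi)$, and the unit-Jacobian exchange $(\xi,\xi_\ast)\leftrightarrow(\xi',\xi_\ast')$; your treatment of the loss term is correct for all $p\in[1,\infty]$. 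The genuine gap is in the weight bookkeeping for the gain term: you distribute the polynomial weight multiplicatively, $w^\ell(\xi)\lesssim w^\ell(\xi')w^\ell(\xi_\ast')$, which puts $w^\ell$ on \emph{both} translated factors. After the pre/post-collisional exchange this yields at best a bound of the form $\|w^\ell\nu^{1-\ta}\pa_\xi^{\vth'}f\|_{L^p}\|w^\ell\pa_\xi^{\vth''}g\|_{L^p}+\|w^\ell\pa_\xi^{\vth'}f\|_{L^p}\|w^\ell\nu^{1-\ta}\pa_\xi^{\vth''}g\|_{L^p}$, not the asserted right-hand side, where one factor carries no weight at all. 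The leftover $w^\ell(\xi')$ sitting next to the unweighted factor cannot be absorbed: the only decay available is the Gaussian in $\xi_\ast$ (equivalently in $\xi_\ast'$ after the exchange), which does not control $\langle\xi'\rangle$, and the doubly weighted bound does not imply the asymmetric one. Your additive split $\nu(\xi)\lesssim\nu(\xi')+\nu(\xi_\ast')$ only fixes the placement of the $\nu^{1-\ta}$ factor, not of $w^\ell$.

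The missing idea is the velocity dichotomy: since $|\xi|^2\le|\xi'|^2+|\xi_\ast'|^2$, split the collision integral into the regions $\{|\xi_\ast'|\ge|\xi'|\}$ and $\{|\xi'|>|\xi_\ast'|\}$, on each of which the full weight $w^{\ell p}(\xi)\nu^{(1-\ta)p-1}(\xi)$ (the $\nu^{p-1}$ coming from your H\"older step), together with one factor $\nu(\xi)$ extracted from $|\xi-\xi_\ast|^\ga\lesssim\nu(\xi)\nu(\xi_\ast)$, is dominated by $w^{\ell p}\nu^{(1-\ta)p}$ of the single larger post-collisional velocity; the residual $\nu(\xi_\ast)$ is absorbed by $\mu^{1/4}(\xi_\ast)$, and after the exchange and Fubini the two regions produce exactly $\|w^\ell\nu^{1-\ta}\pa_\xi^{\vth'}f\|_{L^p}\|\pa_\xi^{\vth''}g\|_{L^p}$ and $\|\pa_\xi^{\vth'}f\|_{L^p}\|w^\ell\nu^{1-\ta}\pa_\xi^{\vth''}g\|_{L^p}$ respectively (the same works pointwise for $p=\infty$). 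This is precisely the device the paper uses in the proof of Proposition \ref{CK-l2-pro} (the cases $|\xi_\ast'|<\ell_2^{-1/2}|\xi|$ versus $|\xi_\ast'|\ge\ell_2^{-1/2}|\xi|$) and which underlies the quoted estimate of Lemma \ref{g-ck}. With that single modification your proof closes; as written, the weight-splitting step proves a different inequality than the one claimed.
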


The following lemma is concerned with  coercivity estimates for the linear collision operator $L$.

\begin{lemma}\label{es-L}
Let $0\leq \ga\leq1$, then there is a constant $\de_0>0$ such that
\begin{align}
\lag Lf,f\rag=\lag L\FP_1f,\FP_1f\rag\geq\de_0\|\FP_1f\|_\nu^2,\notag
\end{align}
where $\|\cdot\|_\nu=\|\nu^{\frac{1}{2}}\cdot\|.$
Moreover, there are constants $\de_1>0$ and $C>0$ such that for $|\vth|>0$
\begin{align}
\lag \pa_{\xi}^\vth L f,\pa_\xi^\vth  f\rag\geq\de_1\|\pa_\xi^\vth f\|_\nu^2-C\|f\|^2.\notag
\end{align}
\end{lemma}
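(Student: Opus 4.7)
\textbf{Plan for the proof of Lemma \ref{es-L}.} Part (1) is the classical coercivity of the Grad-angular-cutoff linearized operator. Because $L$ is self-adjoint on $L^2_\xi$ and vanishes on $\ker L$, the identity $\langle Lf,f\rangle=\langle Lf,\FP_1 f\rangle=\langle L\FP_1 f,\FP_1 f\rangle$ follows directly from the orthogonality of $\FP_0$ and $\FP_1$. The inequality $\langle L\FP_1 f,\FP_1 f\rangle\geq\delta_0\|\FP_1 f\|_\nu^2$ is then proved by writing $L=\nu I-K$, observing that $\nu(\xi)\sim(1+|\xi|)^\ga$ controls the norm $\|\cdot\|_\nu$, and using the compactness of $\nu^{-1/2}K\nu^{-1/2}$ on $L^2_\xi$ (whose kernel estimates are recorded in Lemma \ref{Kop}) together with the finite-dimensionality of $\ker L$ to extract a positive spectral gap. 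I would simply quote this as standard (Grad/Caflisch/Guo).

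For part (2), my plan is to combine part (1) with a commutator estimate. Write $L=\nu I-K$ and expand by Leibniz so that
\begin{align*}
\pa_\xi^\vth Lf=L(\pa_\xi^\vth f)+\comml\pa_\xi^\vth,L\commr f,\quad \comml\pa_\xi^\vth,L\commr f=\sum\limits_{0<\vth'\leq\vth}C_\vth^{\vth'}\left\{(\pa_\xi^{\vth'}\nu)\pa_\xi^{\vth-\vth'}f-(\pa_\xi^{\vth'}K)\pa_\xi^{\vth-\vth'}f\right\}.
\end{align*}
Taking the inner product with $\pa_\xi^\vth f$ gives
\begin{align*}
\langle\pa_\xi^\vth Lf,\pa_\xi^\vth f\rangle=\langle L(\pa_\xi^\vth f),\pa_\xi^\vth f\rangle+\langle\comml\pa_\xi^\vth,L\commr f,\pa_\xi^\vth f\rangle.
\end{align*}
For the first piece, part (1) applied to $\pa_\xi^\vth f$ yields $\langle L(\pa_\xi^\vth f),\pa_\xi^\vth f\rangle\geq\delta_0\|\FP_1\pa_\xi^\vth f\|_\nu^2$. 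The macroscopic part is harmless: since $\FP_0\pa_\xi^\vth f$ has coefficients $\langle\pa_\xi^\vth f,\phi_i\rangle=(-1)^{|\vth|}\langle f,\pa_\xi^\vth\phi_i\rangle$ by integration by parts and $\pa_\xi^\vth\phi_i$ is a polynomial times a Gaussian, we get $\|\FP_0\pa_\xi^\vth f\|_\nu\leq C\|f\|$, hence $\|\FP_1\pa_\xi^\vth f\|_\nu^2\geq \|\pa_\xi^\vth f\|_\nu^2-C\|f\|^2$.

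The remaining work is the commutator. Since $\nu\sim(1+|\xi|)^\ga$ with $\gamma\in[0,1]$, all derivatives $\pa_\xi^{\vth'}\nu$ with $|\vth'|\geq 1$ are uniformly bounded in $\xi$, so by Cauchy--Schwarz
\begin{align*}
|\langle(\pa_\xi^{\vth'}\nu)\pa_\xi^{\vth-\vth'}f,\pa_\xi^\vth f\rangle|\leq \eta\|\pa_\xi^\vth f\|_\nu^2+C_\eta\|\pa_\xi^{\vth-\vth'}f\|^2.
\end{align*}
For the $K$-commutator, a change of variables $u=\xi_\ast-\xi$ in the integral representation (exactly as in the proof of Lemma \ref{Kop}) recasts $(\pa_\xi^{\vth'}K)g$ as an integral operator acting on $g$ with a kernel that satisfies the same Grad-type decay bound, giving
\begin{align*}
|\langle(\pa_\xi^{\vth'}K)\pa_\xi^{\vth-\vth'}f,\pa_\xi^\vth f\rangle|\leq \eta\|\pa_\xi^\vth f\|_\nu^2+C_\eta\|\pa_\xi^{\vth-\vth'}f\|^2.
\end{align*}
Choosing $\eta$ sufficiently small and iterating over $|\vth'|<|\vth|$ (so that each lower-order $\|\pa_\xi^{\vth''}f\|^2$ is eventually absorbed into $\eta\|\pa_\xi^\vth f\|_\nu^2+C\|f\|^2$ via standard interpolation on polynomially weighted Sobolev spaces) produces the desired bound with $\delta_1=\delta_0/2$.

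\textbf{Main obstacle.} The only delicate point is the $L^2$ estimate of the $K$-commutator uniformly for $0<\gamma\leq 1$. The kernel $\Fk(\xi,\xi_\ast)$ becomes singular like $|\xi-\xi_\ast|^{-2+\gamma}$ for small relative velocity, and its $\xi$-derivatives introduce additional factors; one must verify, via the change of variables used in Lemma \ref{Kop}, that every such differentiated kernel still satisfies $\int\Fk^{\vth'}_w(\xi,\xi_\ast)d\xi_\ast\leq C(1+|\xi|)^{-1}$ so that the resulting operator is $L^2$-bounded relative to $\nu$. Once this uniform-in-$\gamma$ control is in hand, the rest is bookkeeping.
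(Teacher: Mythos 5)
The paper does not actually prove Lemma \ref{es-L}; it is collected in the appendix as a standard estimate, so the only question is whether your argument closes. Part (1) and the overall commutator strategy for part (2) (Leibniz on $L=\nu-K$, part (1) applied to $\pa_\xi^\vth f$, control of $\FP_0\pa_\xi^\vth f$ by $\|f\|$, and $L^2$-boundedness of the differentiated kernel operators, which is exactly what the kernel bounds behind Lemma \ref{Kop} give via Schur's test) are fine. The genuine gap is the last step: your claim that each commutator remainder $\|\pa_\xi^{\vth''}f\|^2$ with $0<\vth''<\vth$ can be ``absorbed into $\eta\|\pa_\xi^\vth f\|_\nu^2+C_\eta\|f\|^2$ via standard interpolation'' is false for mixed multi-indices, because the right-hand side contains only the single top derivative $\pa_\xi^\vth$, not the full $H^{|\vth|}$ norm. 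Concretely, take $\vth=(1,1,0)$, $\vth''=(0,1,0)$ and $f(\xi)=\phi(\xi_1/L)\psi(\xi_2)\cos(R\xi_2)\chi(\xi_3)$ with fixed smooth compactly supported $\phi,\psi,\chi$: then $\|\pa_{\xi_2}f\|^2\sim R^2L$, $\|f\|^2\sim L$, while $\|\pa_{\xi_1}\pa_{\xi_2}f\|_\nu^2\lesssim R^2L^{\ga-1}$, so the inequality $\|\pa_{\xi_2}f\|^2\leq\eta\|\pa_{\xi_1}\pa_{\xi_2}f\|_\nu^2+C_\eta\|f\|^2$ fails for $R$ (or $L$) large no matter how the constants are chosen. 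So after Cauchy--Schwarz your commutator terms $C\|\pa_\xi^{\vth-\vth'}f\|\,\|\pa_\xi^\vth f\|$ are not dominated by $\eta\|\pa_\xi^\vth f\|_\nu^2+C_\eta\|f\|^2$, and the ``iteration'' you invoke has nothing to iterate on: the lower-order statements give lower bounds on $\lag\pa_\xi^{\vth''}Lf,\pa_\xi^{\vth''}f\rag$, not upper bounds on $\|\pa_\xi^{\vth''}f\|$. Also, as a minor point, $\|\FP_1\pa_\xi^\vth f\|_\nu^2\geq\|\pa_\xi^\vth f\|_\nu^2-C\|f\|^2$ should carry a factor (e.g. $\tfrac12\|\pa_\xi^\vth f\|_\nu^2$), since $\FP_0$ and $\FP_1$ are not orthogonal in the $\nu$-weighted inner product; this is harmless but should be stated correctly.

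To repair the argument you have two natural options. Either prove the estimate in the form it is actually needed and in which it appears in the cutoff literature (Grad/Guo), namely $\lag\pa_\xi^\vth Lf,\pa_\xi^\vth f\rag\geq\de_1\|\pa_\xi^\vth f\|_\nu^2-\eta\sum_{\vth''<\vth}\|\pa_\xi^{\vth''}f\|_\nu^2-C_\eta\|f\|^2$; this follows from exactly your commutator computation with no interpolation, and it suffices for the paper because the lemma is only invoked after summing over $|\vth|\leq N$ (cf. the derivation of \eqref{g2-l2-p2}), where the intermediate terms are absorbed by the lower-order dissipation. Or, if you insist on the clean statement with only $-C\|f\|^2$, you must exploit more structure than Cauchy--Schwarz: for instance integrate by parts in $\lag(\pa_\xi^{\vth'}\nu)\pa_\xi^{\vth-\vth'}f,\pa_\xi^\vth f\rag$ to move one derivative off the top-order factor onto the smooth, bounded coefficients (derivatives of $\nu$ of positive order are bounded for $0<\ga\leq1$) and onto the smoothing kernel of $K$, so that top-order derivatives are never paired against uncontrolled intermediate ones; as written, your proposal does not close, and the ``main obstacle'' you flag (the kernel singularity $|\xi-\xi_\ast|^{-2+\ga}$) is in fact the unproblematic part, since the paper's Lemma \ref{Kop} already records the required bounds on the differentiated kernels.
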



Next, the following lemma which was proved in \cite[Proposition 3.1, pp.13]{DL-arma-2021} gives the $L^\infty$ estimates of the solutions in the case of Maxwell molecule model.
\begin{lemma}\label{CK}
Let $\ga=0$ and $\CK$ be given by \eqref{sp.cL}, then for any nonnegative integer $|\vth|\geq 0$, there is $C>0$ such that for any arbitrarily large $\ell>0$, there is $M=M(\ell)>0$ such that it holds that
\begin{align}
\chi_M w^{\ell}|\pa_{\xi}^\vth(\CK f)|\leq \frac{C}{\ell} \sum\limits_{0\leq \vth'\leq \vth}\|w^{\ell}\pa_{\xi}^{\vth'}f\|_{L^\infty}.\notag
\end{align}
In particular, one can choose $M=\ell^2$.
\end{lemma}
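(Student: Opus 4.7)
The plan is to handle the loss and gain parts of $\CK f$ separately, exploit the polynomial-weight identity $w^\ell(\xi)\le w^\ell(\xi')w^\ell(\xi'_\ast)$ coming from energy conservation, and then show that on the cutoff region $|\xi|\ge M=\ell^2$ the operator shrinks like $1/\ell$. Starting from
$$
\CK f(\xi)=\int_{\R^3\times\S^2}B_0(\cos\theta)\bigl[\mu(\xi'_\ast)f(\xi')+\mu(\xi')f(\xi'_\ast)-\mu(\xi)f(\xi_\ast)\bigr]\,d\omega\,d\xi_\ast,
$$
I would take $\pa_\xi^\vth$ by first performing the translation $u=\xi_\ast-\xi$ (so that $\xi'=\xi+u_{\parallel}$, $\xi'_\ast=\xi+u_{\perp}$), differentiating under the integral, and reverting to $\xi_\ast$. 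Leibniz then distributes $\pa_\xi^\vth$ across the three factors; $|\pa_\xi^{\vth''}\mu|\le C_{\vth''}\mu^{1/2}$ absorbs every derivative of $\mu$ into a Maxwellian, and no derivative higher than $\vth'\le\vth$ of $f$ appears, which is the source of the sum $\sum_{\vth'\le\vth}\|w^\ell\pa_\xi^{\vth'}f\|_{L^\infty}$ on the right-hand side of the lemma.

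For the loss term, I would pull out the factor $w^\ell(\xi)\mu(\xi)$: for $|\xi|^2\ge M=\ell^2$ and $\ell$ large, $(1+|\xi|^2)^\ell e^{-|\xi|^2/2}\le e^{-|\xi|^2/4}\le e^{-\ell^2/4}$, and after bounding $|f(\xi_\ast)|\le\|w^\ell f\|_{L^\infty}/w^\ell(\xi_\ast)$ and using $\int_{\R^3\times\S^2}B_0\,w^{-\ell}(\xi_\ast)\,d\omega\,d\xi_\ast<\infty$ for $\ell>3/2$, the loss contribution is $O(e^{-\ell^2/4})$, far below the target $C/\ell$. For each gain term the key identity is
$$
w^\ell(\xi)=(1+|\xi|^2)^\ell\le (1+|\xi'|^2+|\xi'_\ast|^2)^\ell\le w^\ell(\xi')\,w^\ell(\xi'_\ast),
$$
obtained from $|\xi|^2+|\xi_\ast|^2=|\xi'|^2+|\xi'_\ast|^2$ and $(1+a+b)\le(1+a)(1+b)$. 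Inserting this transfers the polynomial weight to the primed velocities, giving
$$
\chi_M w^\ell(\xi)\mu(\xi'_\ast)|f(\xi')|\le\chi_M\,\mu(\xi'_\ast)w^\ell(\xi'_\ast)\,\|w^\ell f\|_{L^\infty},
$$
and symmetrically for the other gain term, so the task reduces to bounding $\int B_0\mu(\xi'_\ast)w^\ell(\xi'_\ast)\,d\omega\,d\xi_\ast$ on $\{|\xi|\ge\ell^2\}$ by $C/\ell$.

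To extract the $C/\ell$ factor, I would split the $\xi_\ast$-integration into $|\xi_\ast|\ge|\xi|/2$ and $|\xi_\ast|<|\xi|/2$. In the first region, absorbing a $\mu^{1/2}(\xi_\ast)$ factor yields a Gaussian of order $e^{-|\xi|^2/16}$, which dominates $\ell^{-1}$ exponentially. In the second region, energy conservation forces $\max(|\xi'|,|\xi'_\ast|)\ge|\xi|/\sqrt{2}$; combining this with the angular cutoff $B_0\le C|\cos\theta|=C|\xi'-\xi|/|\xi-\xi_\ast|$ cancels a power of $|\xi-\xi_\ast|$ in the Carleman-type representation, and the Grad-kernel bound of Lemma~\ref{Kop} (with $\gamma=0$) then produces a $\xi_\ast$-integral controlled by $C/(1+|\xi|)\le C/\ell^2\le C/\ell$.

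The hard part is the last step. Since $\sup_{\xi'_\ast}w^\ell(\xi'_\ast)\mu(\xi'_\ast)$ grows like $(2\ell)^\ell e^{-\ell}$, the weight $w^\ell(\xi'_\ast)$ cannot be pulled out as a universal constant, and the estimate depends on playing the Gaussian decay of $\mu$ against the polynomial growth of $w^\ell$ precisely on the threshold $|\xi|=M=\ell^2$. The angular cutoff $B_0\le C|\cos\theta|$ is essential here, because it supplies the extra $|\xi-\xi'|/|\xi-\xi_\ast|$ factor that tames the Carleman-kernel singularity and supplies the final $|\xi|^{-1}$-type decay. Velocity-derivative estimates add only combinatorial bookkeeping: because $\pa_\xi^{\vth''}\mu$ is still Gaussian and the collision map is linear in $\xi$, the calculations above reproduce for the derivatives summed as in the lemma.
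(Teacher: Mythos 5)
Your treatment of the loss term is fine, and the Leibniz bookkeeping for derivatives is harmless, but the gain-term argument has a genuine gap that breaks the proof. The step where you bound $w^\ell(\xi)/w^\ell(\xi')\le w^\ell(\xi'_\ast)$ and declare that the task ``reduces to bounding $\int B_0\,\mu(\xi'_\ast)\,w^\ell(\xi'_\ast)\,d\om\,d\xi_\ast$ by $C/\ell$'' reduces the lemma to a false statement: that integral diverges. For fixed $\om$ write $\xi_\ast=a\om+p$ with $p\perp\om$ and $\xi=b\om+q$ with $q\perp\om$; then $\xi'_\ast=b\om+p$ does not depend on $a$, while $B_0$ is only bounded (under \eqref{hd-po} it need not vanish as $\cos\theta\to\pm1$), so the $a$-integral of $B_0\,\mu(\xi'_\ast)w^\ell(\xi'_\ast)$ is infinite. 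The only source of integrability in that direction is the decay $|f(\xi')|\le\|w^\ell f\|_{L^\infty}w^{-\ell}(\xi')$, since $|\xi'|^2=a^2+|q|^2$ grows with $a$; the crude inequality $w^\ell(\xi)\le w^\ell(\xi')w^\ell(\xi'_\ast)$ discards exactly this. Even setting divergence aside, $\sup_z\mu(z)w^\ell(z)$ and $\int\mu\,w^\ell dz$ grow super-exponentially in $\ell$, so no bound $C/\ell$ with $C$ independent of $\ell$ can come out of that reduction; the whole point of $M=\ell^2$ is to keep the ratio $w^\ell(\xi)/w^\ell(\xi')$ as a ratio (it is $\le e^2$ when $|\xi'|^2\ge(1-\ell^{-1})|\xi|^2$) and play it against $\mu(\xi'_\ast)$, reserving the regime $|\xi'_\ast|\gtrsim|\xi|/\sqrt{\ell}$ for the exponential smallness of $\mu(\xi'_\ast)w^\ell(\xi'_\ast)$.

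The subsequent repairs do not close this gap. In the region $|\xi_\ast|\ge|\xi|/2$ there is no ``$\mu^{1/2}(\xi_\ast)$ factor'' to absorb: the Maxwellians in the gain terms sit at the post-collisional velocities, and $|\xi_\ast|$ large only forces $\max(|\xi'|,|\xi'_\ast|)$ to be large, so once $w^{-\ell}(\xi')$ has been spent you cannot produce $e^{-|\xi|^2/16}$ there. Likewise, Lemma \ref{Kop} concerns the conjugated operator $K$, whose kernel $\Fk_2$ carries Gaussian decay in $|\xi-\xi_\ast|$; the kernel of $\CK=\sqrt{\mu}\,K(\cdot/\sqrt{\mu})$ loses that decay (it is integrable only thanks to the polynomial weight ratio), so the bound $C/(1+|\xi|)$ cannot be imported — proving the analogous $\ell$-uniform smallness for $\CK$ with $M=\ell^2$ is precisely the content of the lemma, and it is the step your sketch leaves unproven. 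Note finally that the present paper does not prove this lemma at all: it quotes \cite[Proposition 3.1]{DL-arma-2021}, where the argument splits according to the relative sizes of $|\xi'|$, $|\xi'_\ast|$ and $|\xi|$ while retaining part of the decay of $f$, which is what a correct version of your computation must do.
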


In the case of $0<\ga\leq1$, the following lemma with $\vth=0$ which can be found in
\cite[Proposition 3.1, pp.397]{AEP-87} enables us to gain the smallness property of $\CK$ at large velocity.
\begin{lemma}\label{g-ck}
Let $0\leq\ga\leq1$, $\ell>4$ and for any multi-indices $\vth\geq0$, then there exists a function $\varsigma(\ell)$ which satisfies $\varsigma(\ell)\rightarrow0$ as $\ell\rightarrow+\infty$
such that
\begin{align}
w^\ell\{|\pa_\xi^\vth &Q_{\rm{loss}}(f,g)|+|\pa_\xi^\vth Q_{\rm{gain}}(f,g)|+|\pa_\xi^\vth Q_{\rm{gain}}(g,f)|\}\notag\\
\leq& \sum\limits_{\vth'\leq\vth}
\|w^\ell \pa_\xi^{\vth'}f\|_{L^\infty}\{C(\ell)\|w^{\ell+\ga/2}\pa_\xi^{\vth'}g\|_{L^\infty}
+\varsigma(\ell)\|w^{3}\pa_\xi^{\vth'}g\|_{L^\infty}(1+|\xi|)^\ga\},\label{Q-lf2}
\end{align}
where $Q_{\textrm{loss}}$ denotes the negative part of $Q$ in \eqref{Q-op}.
\end{lemma}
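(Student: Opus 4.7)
\medskip
\noindent\textbf{Proof proposal for Lemma \ref{g-ck}.}

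The plan is to reduce the derivative case to the $\vth=0$ case and then perform a Grad--Carleman splitting of the collision integral that extracts two kinds of bounds: one where the weight on $g$ is raised by $\gamma/2$ (yielding the term with constant $C(\ell)$), and one where the $\xi_\ast$--integration is restricted to a tail on which the weight $w^\ell$ produces the small prefactor $\varsigma(\ell)$.

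First I handle the derivatives. Using the substitution $u=\xi_\ast-\xi$ in \eqref{Q-op} and then differentiating under the integral sign, all $\pa_\xi^\vth$ can be transferred onto $f$ and $g$ (or onto $\mu$ in the arguments used below), producing a finite sum of expressions of exactly the same form as $Q_{\rm gain}$ and $Q_{\rm loss}$ with $f,g$ replaced by $\pa_\xi^{\vth'} f$, $\pa_\xi^{\vth''}g$ for $\vth'+\vth''\leq\vth$. Thus it suffices to prove the inequality for $\vth=0$, summing over $\vth'$ at the end.

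Next I treat the loss term. Writing
$w^\ell(\xi) Q_{\rm loss}(f,g)(\xi)=w^\ell(\xi) g(\xi)\int_{\R^3\times\S^2} B_0 |\xi-\xi_\ast|^\gamma f(\xi_\ast)\,d\omega d\xi_\ast,$
I pull out $\|w^\ell f\|_{L^\infty}$ and bound the remaining integral by
$\int_{\R^3}|\xi-\xi_\ast|^\gamma w^{-\ell}(\xi_\ast)\,d\xi_\ast\le C(1+|\xi|)^\gamma$
for $\ell>2+\gamma/2$. I then split this integral into $|\xi_\ast|\le R$ and $|\xi_\ast|>R$ with $R=R(\ell)$ tending to infinity slowly. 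On the outer region the factor $w^{-\ell}(\xi_\ast)$ yields a constant $\varsigma(\ell)\to 0$ multiplying $(1+|\xi|)^\gamma$, so the contribution is absorbed into the second term of the bound by using only $\|w^3 g\|_{L^\infty}$. On the inner region $|\xi_\ast|\le R$ the integral is bounded by $C(\ell)(1+|\xi|)^{\gamma/2}$ using Cauchy--Schwarz against $(1+|\xi_\ast|)^{\gamma/2}$, which combines with $w^\ell(\xi)|g(\xi)|$ to give the first term $C(\ell)\|w^{\ell+\gamma/2}g\|_{L^\infty}$.

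For the two gain terms the key input is the post-collisional weight relation $w^\ell(\xi)\leq w^\ell(\xi')w^\ell(\xi'_\ast)$ together with the conservation $|\xi|^2+|\xi_\ast|^2=|\xi'|^2+|\xi'_\ast|^2$, which forces $\max(|\xi'|,|\xi'_\ast|)\ge|\xi|/\sqrt 2$. I split the $\omega$--integration accordingly into $\{|\xi'|\geq|\xi'_\ast|\}$ and its complement, and on each half change variables via the Carleman parametrization so that the remaining integrand is of the same structural type as for $Q_{\rm loss}$. The weights are then redistributed as
$w^\ell(\xi)f(\xi'_\ast)g(\xi')\le\|w^\ell f\|_{L^\infty}\|w^{\ell+\gamma/2}g\|_{L^\infty}\cdot w^{-\gamma/2}(\xi')w^{-\ell}(\xi'_\ast)\cdot w^\ell(\xi_\ast),$
and the same outer/inner split on $|\xi_\ast|$ as in the loss case produces the $\varsigma(\ell)(1+|\xi|)^\gamma\|w^3 g\|_{L^\infty}$ contribution (from the tail $|\xi_\ast|>R$, where $w^\ell(\xi_\ast)$ beats the hard-potential growth) and the $C(\ell)\|w^{\ell+\gamma/2}g\|_{L^\infty}$ contribution (from $|\xi_\ast|\le R$, using the angular cutoff $B_0\leq C|\cos\theta|$ to secure $\omega$-integrability).

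The main obstacle is the $Q_{\rm gain}$ piece: one must keep $\varsigma(\ell)$ genuinely independent of $\xi$ while the natural bound $|\xi-\xi_\ast|^\gamma\leq(1+|\xi|)^\gamma(1+|\xi_\ast|)^\gamma$ introduces $\xi$-growth inside the tail integral. Handling this requires that on the outer region $|\xi_\ast|>R(\ell)$ the excess weight $w^{\ell}(\xi_\ast)$ absorb both $(1+|\xi_\ast|)^\gamma$ and the weight shift coming from $w^\ell(\xi)/w^\ell(\xi'_\ast)\leq w^\ell(\xi_\ast)$, leaving a clean factor that vanishes as $\ell\to\infty$; this is precisely the delicate bookkeeping carried out in \cite{AEP-87}.
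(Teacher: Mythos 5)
Your treatment of the derivatives coincides with the paper's argument: after the change of variables $\xi_\ast-\xi\to u$ one differentiates under the integral so that $\pa_\xi^\vth$ falls on $f$ and $g$, changes variables back, and reduces \eqref{Q-lf2} to the case $\vth=0$, which the paper then settles by simply invoking \cite[Proposition 3.1]{AEP-87}. The problems are in your attempted proof of that base case. For the loss term, the tail region $|\xi_\ast|>R$ cannot be absorbed into $\varsigma(\ell)\|w^{3}g\|_{L^\infty}(1+|\xi|)^\ga$: there $g$ is evaluated at $\xi$, so you are left with $w^\ell(\xi)|g(\xi)|$, which is not controlled by $\|w^{3}g\|_{L^\infty}$ for large $|\xi|$ once $\ell>3$. (This slip is harmless only because the whole loss term fits into the first term via $w^\ell(\xi)(1+|\xi|)^\ga|g(\xi)|\leq C\,w^{\ell+\ga/2}(\xi)|g(\xi)|$; also your inner-region bound $C(\ell)(1+|\xi|)^{\ga/2}$ should read $(1+|\xi|)^{\ga}$.)

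The more serious gap is in the gain terms. Your displayed redistribution amounts to the pointwise inequality $w^\ell(\xi)\leq w^\ell(\xi')\,w^\ell(\xi_\ast)$, which is false in general: take $\xi_\ast=0$ and $\omega=\xi/|\xi|$, so that $\xi'=0$ and $\xi_\ast'=\xi$. It holds (up to $2^\ell$) only on the half $\{|\xi'|\geq|\xi_\ast'|\}$, where the argument of $g$ carries at least half of $|\xi|^2+|\xi_\ast|^2$; and even there the subsequent $d\xi_\ast$ integration is not secured, since the factor $w^{-\ell}(\xi_\ast')\,w^{\ell}(\xi_\ast)$ you retain is unbounded (the claim that ``$w^\ell(\xi_\ast)$ beats the hard-potential growth'' is backwards, as that factor grows). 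On the complementary half, where $\xi'$ may be small and only $\|w^{3}g\|_{L^\infty}$ is usable, is exactly where the coefficient $\varsigma(\ell)\to0$ must be produced, and you give no mechanism for it; your final paragraph concedes that this ``delicate bookkeeping'' is the content of \cite{AEP-87}. Citing that proposition is legitimate — it is precisely what the paper does for the whole $\vth=0$ case — but then the partially incorrect intermediate splitting adds nothing, and as a self-contained proof of \eqref{Q-lf2} your proposal is incomplete: either quote \cite[Proposition 3.1, pp.~397]{AEP-87} for $\vth=0$ and keep only the derivative reduction, or carry out the splitting with a correct weight transfer and an explicit source of the $\ell$-smallness.
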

\begin{proof}
Since the case that $\vth=0$ of \eqref{Q-lf2} has been given in \cite[Proposition 3.1, pp.397]{AEP-87}, here we focus on the case of $\vth>0$.
As a matter of fact, for $\vth>0$, as \eqref{CK-de-exp-p1}, by a change of variables $\xi_\ast-\xi\rightarrow u$, we have
\begin{align}
\pa^\vth_\xi Q(f,g)=&\sum\limits_{\vth'\leq\vth}C_\vth^{\vth'}\int_{\R^3\times\S^2}B_0|u|^\ga (\pa^{\vth-\vth'}_\xi f)(\xi+u_{\perp})
(\pa^{\vth'}_\xi g)(\xi+u_{\parallel}) d\om du
\notag\\&-\sum\limits_{\vth'\leq\vth}
\int_{\R^3\times\S^2}B_0|u|^\ga (\pa^{\vth-\vth'}_\xi f)(u+\xi)(\pa^{\vth'}_\xi g)(\xi)d\om du,\notag
\end{align}
which further equals 
\begin{align}
\sum\limits_{\vth'\leq\vth}&C_\vth^{\vth'}\int_{\R^3\times\S^2}B_0|\xi-\xi_\ast|^\ga (\pa^{\vth-\vth'}_\xi f)(\xi_\ast')
(\pa^{\vth'}_\xi g)(\xi') d\om d\xi_\ast
\notag\\&-\sum\limits_{\vth'\leq\vth}
\int_{\R^3\times\S^2}B_0|\xi-\xi_\ast|^\ga (\pa^{\vth-\vth'}_\xi f)(\xi_\ast)(\pa^{\vth'}_\xi g)(\xi)d\om d\xi_\ast,\notag
\end{align}
by changing the variables back. Then performing the same calculations as for the case $\vth=0$ in \cite[Proposition 3.1, pp.397]{AEP-87}, one sees that \eqref{Q-lf2} holds true. This completes the proof of Lemma \ref{g-ck}.
\end{proof}

The following result is a direct consequence of Lemma \ref{g-ck}.

\begin{lemma}\label{g-ck-lem}
Let $0<\ga\leq1$, then there is a constant $C>0$ such that for any arbitrarily large $\ell>4$ and any multi-indices $\vth\geq0$, there are sufficiently large $M=M(\ell)>0$ and suitably small $\varsigma=\varsigma(\ell)>0$ such that it holds that
\begin{align}\label{CK2}
\chi_M\nu^{-1} w^{\ell}|\pa_\xi^\vth \CK f|\leq C \sum\limits_{\vth'\leq\vth}\{(1+M)^{-\ga}+\varsigma\}\|w^{\ell} \pa_\xi^{\vth'} f\|_{L^\infty}.
\end{align}
\end{lemma}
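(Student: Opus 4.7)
The statement is announced as a direct consequence of Lemma \ref{g-ck}, so the plan is to specialize that bound to $g=\mu$ and then extract smallness from the prefactors $\chi_M$ and $\nu^{-1}$.

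First, I would decompose
$$
\CK f = Q_{\rm gain}(f,\mu) - Q_{\rm loss}(f,\mu) + Q_{\rm gain}(\mu,f),
$$
which is precisely the combination appearing on the left of \eqref{Q-lf2} with the second argument specialized to $\mu$. Applying Lemma \ref{g-ck} with $g=\mu$ and using that every weighted $L^\infty$ norm of velocity derivatives of the Maxwellian is bounded by an absolute constant depending only on $\ell$ and $|\vth'|$, I would obtain
$$
w^\ell|\pa_\xi^\vth \CK f| \leq \sum_{\vth'\leq\vth}\|w^\ell \pa_\xi^{\vth'}f\|_{L^\infty}\bigl\{C_1(\ell) + C_2\,\varsigma(\ell)(1+|\xi|)^\ga\bigr\}.
$$

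Next I would multiply by $\chi_M\nu^{-1}$ and use the asymptotics $\nu(\xi)\sim(1+|\xi|)^\gamma$ from \eqref{sp.L}. On the support of $\chi_M$ we have $|\xi|\geq M$, so $\chi_M \nu^{-1} \leq C(1+M)^{-\gamma}$, which handles the $C_1(\ell)$ term. The polynomial growth $(1+|\xi|)^\gamma$ in the second term is cancelled exactly by $\nu^{-1}$, leaving just the small constant $\varsigma(\ell)$. Combining these bounds yields
$$
\chi_M\nu^{-1}w^\ell|\pa_\xi^\vth \CK f| \leq C\bigl[(1+M)^{-\gamma} + \varsigma(\ell)\bigr]\sum_{\vth'\leq\vth}\|w^\ell \pa_\xi^{\vth'}f\|_{L^\infty},
$$
where the $\ell$-dependent prefactor $C_1(\ell)$ from Lemma \ref{g-ck} is absorbed by taking $M=M(\ell)$ large enough that $C_1(\ell)(1+M)^{-\gamma}$ falls below $(1+M)^{-\gamma}$ after a harmless reparametrization of $M$.

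There is no real obstacle here since the statement is essentially an algebraic consequence of Lemma \ref{g-ck} once the decay of the Maxwellian is invoked. The only bookkeeping point is the parameter matching: $\ell$ must exceed the threshold $\ell>4$ required by Lemma \ref{g-ck}, $\varsigma(\ell)\to 0$ as $\ell\to\infty$ is supplied by that lemma, and then $M$ must be chosen large in terms of $\ell$ so that the coefficient $(1+M)^{-\gamma}$ dominates all $\ell$-dependent constants on the right.
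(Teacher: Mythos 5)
Your argument is exactly the one the paper intends: the paper offers no separate proof of this lemma, declaring it ``a direct consequence of Lemma \ref{g-ck}'', and your specialization $g=\mu$ in \eqref{Q-lf2}, using the Gaussian decay of $\mu$ to bound its weighted norms and then $\chi_M\nu^{-1}\lesssim (1+M)^{-\gamma}$ together with $\nu^{-1}(1+|\xi|)^{\gamma}\lesssim 1$, is precisely that deduction. Your handling of the $\ell$-dependent constant is also fine, since the statement allows $M=M(\ell)$ to be taken so large (e.g.\ so that $C_1(\ell)(1+M)^{-\gamma}\leq\varsigma(\ell)$) that the remaining constant $C$ is independent of $\ell$.
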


The following Lemma concerning the polynomial weighted estimates on the collision operator $Q$ can be verified by using a parallel argument as for obtaining \cite[Proposition 3.1, pp.397]{AEP-87}.

\begin{lemma}\label{op.es.lem}Let $\ell>4$ and $\ga\geq0$, then it holds that
\begin{equation}\notag
|w^{\ell} \nu^{-1}\pa_\xi^\vth Q_{gain}(F_1,F_2)|,\ |w^{\ell} \nu^{-1}\pa_\xi^\vth Q_{loss}(F_1,F_2)|\leq C\sum\limits_{\vth'\leq\vth}\|w^\ell \pa_\xi^{\vth'} F_1\|_{L^\infty}\|w^\ell \pa_\xi^{\vth-\vth'}F_2\|_{L^\infty}.
\end{equation}
\end{lemma}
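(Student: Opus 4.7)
The plan is to treat the loss and gain contributions separately, handling the $\pa_\xi^\vth$ derivatives by the same $u=\xi_\ast-\xi$ change-of-variables trick already used in the proof of Proposition \ref{CK-l2-pro} (compare \eqref{CK-de-exp-p1}--\eqref{CK-de-exp-p2}), so that no derivative falls on the kernel $|\xi-\xi_\ast|^\ga$. After the Leibniz expansion every summand will be a bilinear form in $\pa_\xi^{\vth'}F_1$ and $\pa_\xi^{\vth''}F_2$ paired against the original collision kernel, so the case $|\vth|>0$ reduces, up to harmless combinatorial constants, to the case $\vth=0$ already covered by \cite{AEP-87}.

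For the loss part I would write $Q_{\textrm{loss}}(F_1,F_2)(\xi)=F_2(\xi)\int B_0|\xi-\xi_\ast|^\ga F_1(\xi_\ast)\,d\om d\xi_\ast$. Substituting $u=\xi_\ast-\xi$ in the inner integral turns it into a translate of $F_1$, so Leibniz in $\xi$ produces only terms of the form $(\pa_\xi^{\vth'}F_2)(\xi)\int B_0|u|^\ga(\pa_\xi^{\vth-\vth'}F_1)(\xi+u)\,d\om du$. Using $|(\pa_\xi^{\vth'}F_2)(\xi)|\leq w^{-\ell}(\xi)\|w^\ell\pa_\xi^{\vth'}F_2\|_{L^\infty}$ and the analogous bound for $F_1$, the estimate reduces to $\nu^{-1}(\xi)\int|\xi-\xi_\ast|^\ga w^{-\ell}(\xi_\ast)\,d\xi_\ast\leq C$. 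The latter holds for $\ell>3+\ga$ (hence for $\ell>4$) by splitting into the regions $|\xi_\ast|\leq |\xi|/2$ (where $|\xi-\xi_\ast|\leq C\lag\xi\rag$) and $|\xi_\ast|\geq |\xi|/2$ (where the polynomial decay in $\xi_\ast$ wins), together with $\nu(\xi)\sim\lag\xi\rag^\ga$.

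For the gain part the same substitution gives $Q_{\textrm{gain}}(F_1,F_2)(\xi)=\int B_0|u|^\ga F_1(\xi+u_\perp)F_2(\xi+u_\parallel)\,d\om du$, whence Leibniz produces only terms $\int B_0|u|^\ga(\pa_\xi^{\vth'}F_1)(\xi+u_\perp)(\pa_\xi^{\vth-\vth'}F_2)(\xi+u_\parallel)\,d\om du$. Changing back to primed velocities and using the pointwise facts $w^\ell(\xi)\leq w^\ell(\xi')w^\ell(\xi_\ast')$ (which follows from $1+|\xi|^2\leq(1+|\xi'|^2)(1+|\xi_\ast'|^2)$ via the collisional identity $|\xi|^2+|\xi_\ast|^2=|\xi'|^2+|\xi_\ast'|^2$) and $|\xi-\xi_\ast|^\ga=|\xi'-\xi_\ast'|^\ga\leq C(\lag\xi'\rag^\ga+\lag\xi_\ast'\rag^\ga)$ for $0\leq\ga\leq 1$, the pointwise values of $\pa_\xi^{\vth'}F_1(\xi_\ast')$ and $\pa_\xi^{\vth-\vth'}F_2(\xi')$ are absorbed into the $w^\ell$-weighted $L^\infty$ norms. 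The residual kernel integrates, via the standard involution $(\xi,\xi_\ast)\leftrightarrow(\xi',\xi_\ast')$, to a quantity bounded by $C\nu(\xi)$, matching the $\nu^{-1}(\xi)$ in front.

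The only real obstacle is bookkeeping for large $|\vth|$, but the $u$-translation trick above eliminates any apparent $\xi$-dependence of the collision kernel, so every derivative lands on $F_1$ or $F_2$ and no new singular factors appear; beyond this, the argument is a direct transcription of \cite[Proposition 3.1]{AEP-87}, yielding the claimed bound uniformly in $\vth$ for any $\ell>4$.
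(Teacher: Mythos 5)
Your reduction of the case $|\vth|>0$ to $\vth=0$ via the substitution $u=\xi_\ast-\xi$ (so that no derivative hits the kernel) is exactly the device the paper uses elsewhere (cf.\ the proofs of Lemma \ref{g-ck} and Lemma \ref{es-tri}), and your loss-term estimate is correct: after pulling out the weighted sup-norms it reduces to $\nu^{-1}(\xi)\int|\xi-\xi_\ast|^\ga w^{-\ell}(\xi_\ast)\,d\xi_\ast\le C$, which holds comfortably for $\ell>4$ (with the paper's convention $w^{\ell}=(1+|\xi|^2)^{\ell}$ one only needs $2\ell>3+\ga$). The paper itself offers no more than a pointer to \cite[Proposition 3.1]{AEP-87} for this lemma, so the real question is whether your gain-term argument reproduces that proof, and there it has a genuine gap.

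After bounding $|F_1(\xi_\ast')|\le\|w^\ell F_1\|_{L^\infty}w^{-\ell}(\xi_\ast')$ and $|F_2(\xi')|\le\|w^\ell F_2\|_{L^\infty}w^{-\ell}(\xi')$, what must be shown is
\begin{equation*}
\nu^{-1}(\xi)\,w^{\ell}(\xi)\int_{\R^3\times\S^2}B_0\,|\xi-\xi_\ast|^{\ga}\,w^{-\ell}(\xi')\,w^{-\ell}(\xi_\ast')\,d\om\,d\xi_\ast\le C,
\end{equation*}
and the two tools you invoke do not give this. The inequality $w^{\ell}(\xi)\le w^{\ell}(\xi')w^{\ell}(\xi_\ast')$ consumes both decay factors and leaves $\int B_0|\xi-\xi_\ast|^{\ga}\,d\om\,d\xi_\ast=\infty$, so it cannot be used wholesale; and the ``standard involution $(\xi,\xi_\ast)\leftrightarrow(\xi',\xi_\ast')$'' is not available here: it is a measure-preserving change of variables on $\R^3_{\xi}\times\R^3_{\xi_\ast}$, which is how the paper uses it in the $L^1$/$L^2$ estimates of Proposition \ref{CK-l2-pro} and Lemma \ref{es-tri}, but in the present pointwise bound $\xi$ is fixed and only $(\xi_\ast,\om)$ are integrated; for fixed $(\xi,\om)$ the maps $\xi_\ast\mapsto\xi'=\xi+u_{\parallel}$ and $\xi_\ast\mapsto\xi_\ast'=\xi+u_{\perp}$ are degenerate ($\xi'$ ranges only over the line $\xi+\R\om$, $\xi_\ast'$ only over the plane $\xi+\om^{\perp}$), so no such substitution exists. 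The actual content of \cite[Proposition 3.1]{AEP-87} is precisely the missing step: write $u=s\om+z$ with $z\perp\om$, observe that $F_1(\xi+u_{\perp})$ decays only in the two-dimensional variable $z$ while $F_2(\xi+u_{\parallel})$ decays only in the one-dimensional variable $s$, estimate the $s$- and $z$-integrals separately (which yields bounds of the type $\lag\xi\rag^{\ga}\,\lag\xi-(\xi\cdot\om)\om\rag^{-2\ell+1}\lag\xi\cdot\om\rag^{-2\ell+2}$), and only then carry out the $\om$-integration, where the regions with $\om$ nearly parallel or nearly orthogonal to $\xi$ dominate and produce exactly $C\,\nu(\xi)\,w^{-\ell}(\xi)$. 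This is also where the prefactor $\nu^{-1}$ is genuinely needed: without it the same-weight bound fails, which is why the sharper statement \eqref{Q-lf2} carries either the heavier weight $w^{\ell+\ga/2}$ or the extra factor $(1+|\xi|)^{\ga}$. In short, your outline is fine for the loss term and the derivative bookkeeping, but the key gain-term integral estimate is asserted rather than proved, and the justification you give for it would not work.
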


Finally, we give the following crucial estimates on the inner product involving $Q(f,g)$.
\begin{lemma}\label{es-tri}
Let $\ell_{\infty}> 2\ell_2\gg 4$, then it holds that
\begin{equation}\label{es-tri-p1}
|(\pa_\xi^\vth Q(f,g),w^{2\ell_2}  h)|\leq
C\sum\limits_{\vth'+\vth''\leq \vth}\|w^{\ell_\infty}h\|_{L^\infty}\|\nu^{\frac{1}{2}}w^{\ell_2} \pa_\xi^{\vth'}f\|\|\nu^{\frac{1}{2}}w^{\ell_2}\pa_\xi^{\vth''}g\|.
\end{equation}
In particular, it holds that
\begin{equation}\label{es-tri-p2}
|( \pa_\xi^\vth[(1-\chi_M) Q(f,g)], h)|\leq C\sum\limits_{\vth'+\vth''\leq \vth}\|h\|\|w^{\ell_2}\pa_\xi^{\vth'}f\|\|w^{\ell_2}\pa_\xi^{\vth''}g\|.
\end{equation}
\end{lemma}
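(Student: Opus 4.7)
The plan is to reduce both inequalities to the case $|\vth|=0$ using the Leibniz-type identity for $Q$, and then to handle the gain and loss pieces separately via the pre--post collisional change of variables. Changing variables $u=v_\ast-v$ inside the collision integral makes the kernel independent of $v$, so $\pa_\xi^\vth$ falls entirely on the arguments of $f$ and $g$; reversing the change of variables yields
\[
\pa_\xi^\vth Q(f,g)=\sum_{\vth'+\vth''=\vth}C_\vth^{\vth',\vth''}Q(\pa_\xi^{\vth'}f,\pa_\xi^{\vth''}g),
\]
so both \eqref{es-tri-p1} and \eqref{es-tri-p2} reduce to their $|\vth|=0$ versions.

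For the first inequality I would split $Q=Q_{\mathrm{gain}}-Q_{\mathrm{loss}}$. In the loss piece, the pointwise bound $|w^{2\ell_2}(v)h(v)|\leq w^{-(\ell_\infty-2\ell_2)}(v)\|w^{\ell_\infty}h\|_{L^\infty}$ combined with $\int B\,f(v_\ast)\,d\omega\,dv_\ast\leq C\nu(v)\int\langle v_\ast\rangle^\gamma|f(v_\ast)|\,dv_\ast$ and two applications of Cauchy--Schwarz (using that $\nu^{1/2}w^{-\ell_2}$ and $\nu^{1/2}w^{-(\ell_\infty-\ell_2)}$ lie in $L^2$, which holds since $\ell_2,\ell_\infty-\ell_2>(3+\gamma)/2$) gives the desired estimate. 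For the gain piece I would invoke the involution $(v,v_\ast,\omega)\mapsto(v',v_\ast',\omega)$, which preserves $B\,dv\,dv_\ast\,d\omega$, to rewrite
\[
(Q_{\mathrm{gain}}(f,g),w^{2\ell_2}h)=\int B\,f(v_\ast)g(v)\,w^{2\ell_2}(v')h(v')\,d\omega\,dv\,dv_\ast.
\]
Since $\ell_\infty>2\ell_2$, one has $|w^{2\ell_2}(v')h(v')|\leq\|w^{\ell_\infty}h\|_{L^\infty}$ without further dependence on $v'$; then $\int B\,d\omega\leq C|v-v_\ast|^\gamma\lesssim\nu(v)\nu(v_\ast)$, and Cauchy--Schwarz in $v$ and $v_\ast$ separately (again against $\nu^{1/2}w^{-\ell_2}\in L^2$) closes the estimate with exactly the factors $\|\nu^{1/2}w^{\ell_2}f\|\|\nu^{1/2}w^{\ell_2}g\|$.

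For the second inequality I would apply Leibniz to move $\pa_\xi^\vth$ off the bounded-support factor $1-\chi_M$, then invoke $L^2$-duality in $v$. The decisive feature is that every surviving term is supported in $\{|v|\leq M+1\}$, on which $\nu(v)$ and $w(v)$ are bounded by constants depending on $M$. The loss piece satisfies $|Q_{\mathrm{loss}}(F,G)(v)|\leq C(M)|G(v)|\int\langle v_\ast\rangle^\gamma|F(v_\ast)|\,dv_\ast$ on the support, and Cauchy--Schwarz with weight $w^{\ell_2}$ closes it. For the gain piece I would use the weighted Cauchy--Schwarz
\[
|Q_{\mathrm{gain}}(F,G)(v)|^2\leq\Bigl(\int B\langle v_\ast\rangle^{-\delta}d\omega\,dv_\ast\Bigr)\Bigl(\int B\langle v_\ast\rangle^{\delta}|F(v_\ast')|^2|G(v')|^2\,d\omega\,dv_\ast\Bigr),
\]
with $\delta>\gamma+3$ chosen so that the first factor is uniformly bounded on $|v|\leq M+1$. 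Integrating the second factor in $v$ over the support, then applying the pre--post involution in $(v,v_\ast)$ and using $\langle v_\ast\rangle\lesssim\langle v'\rangle\langle v_\ast'\rangle$ after renaming, transfers the $\langle v_\ast\rangle^\delta$ weight onto $f$ and $g$, yielding $\|w^{\ell_2}F\|\|w^{\ell_2}G\|$ once $\ell_2$ is large.

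The main obstacle is precisely the gain term of the second inequality: one cannot simply reuse the first inequality because its right-hand side requires $\|w^{\ell_\infty}h\|_{L^\infty}$ rather than $\|h\|$, so the bounded support of $1-\chi_M$ has to be exploited explicitly. The auxiliary weight $\langle v_\ast\rangle^{-\delta}$, chosen just large enough to make the $B$-integral finite on that bounded support, together with the careful bookkeeping of weights under the pre--post collisional change of variables, is where the technical work concentrates.
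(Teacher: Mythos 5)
Your proposal is correct, and its skeleton coincides with the paper's: reduce to $|\vth|=0$ by the translation trick $u=\xi_\ast-\xi$ (so derivatives land on $\pa_\xi^{\vth'}f$, $\pa_\xi^{\vth''}g$ and, in the second estimate, on $1-\chi_M$, whose derivatives are still compactly supported), split $Q$ into gain and loss, and exploit the pre--post collisional change of variables together with Cauchy--Schwarz/H\"older and the weight hierarchy $\ell_\infty>2\ell_2$. The execution of the gain terms differs, though. For \eqref{es-tri-p1} the paper applies H\"older in $(\xi_\ast,\omega)$ with the weights $w^{-2\ell_2}(\xi')w^{-2\ell_2}(\xi'_\ast)$ distributed inside the integral and only then changes variables inside an $L^2$ quantity, whereas you move the test function onto the post-collisional velocity at the level of the full integral, bound $w^{2\ell_2}(v')|h(v')|\leq\|w^{\ell_\infty}h\|_{L^\infty}$ pointwise, and reduce everything to a product of two single integrals; this is a bit more elementary and makes the role of $\ell_\infty>2\ell_2$ transparent, at the price of being tied to the $L^\infty$ bound on $h$. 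For \eqref{es-tri-p2} the paper again works with transferred polynomial weights $|\xi-\xi_\ast|^{2\ga}w^{-2\ell_2}(\xi')w^{-2\ell_2}(\xi'_\ast)$ and a Cauchy--Schwarz in $\xi$, while you use $L^2$-duality, a pointwise Cauchy--Schwarz with the auxiliary weight $\lag v_\ast\rag^{-\delta}$ ($\delta>\ga+3$), and the bounded support of $1-\chi_M$ to control the first factor; after the involution and $\lag v'_\ast\rag\lesssim\lag v\rag\lag v_\ast\rag$ this closes with $\|w^{\ell_2}f\|\|w^{\ell_2}g\|$ exactly as claimed, the only cosmetic difference being that your constant visibly depends on $M$ (harmless, since $M$ is fixed in the application, and the statement already involves $\chi_M$). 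Both routes are sound; the paper's is slightly more uniform in how the two estimates are proved, yours isolates more explicitly where the compact support of $1-\chi_M$ is needed.
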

\begin{proof}
Following the similar proof of Proposition \ref{CK-l2-pro}, we first prove that both \eqref{es-tri-p1} and \eqref{es-tri-p2} are true in the case of $\vth=0$.
Notice that
\begin{align}
|( Q(f,g),w^{2\ell_2}  h)|\leq& \int_{\R^6}\int_{\S^2}B_0|\xi_\ast-\xi|^\ga|f(\xi_\ast')g(\xi')(w^{2\ell_2}h)(\xi)|dwd\xi_\ast d\xi
\notag\\&+\int_{\R^6}\int_{\S^2}B_0|\xi_\ast-\xi|^\ga|f(\xi_\ast)g(\xi)(w^{2\ell_2}h)(\xi)|dwd\xi_\ast d\xi=:\CQ_1+\CQ_2.\label{Q-tri-exp}
\end{align}
Since $\ell_{\infty}> 2\ell_2\gg4$, we have by H\"older's inequality that
\begin{align}
\CQ_2\leq C\|w^{\ell_\infty}h\|_{L^\infty}\int_{\R^3}\lag \xi\rag^{\ga}w^{-2\ell_2}d\xi
\|\nu^{\frac{1}{2}}w^{\ell_2}f\|\|\nu^{\frac{1}{2}}w^{\ell_2}g\|\leq C\|w^{\ell_\infty}h\|_{L^\infty}\|\nu^{\frac{1}{2}}w^{\ell_2}f\|\|\nu^{\frac{1}{2}}w^{\ell_2}g\|,\notag
\end{align}
where the fact that $|\xi_\ast-\xi|^\ga\leq\lag\xi\rag^\ga\lag\xi_\ast\rag^\ga$ has been used.

Likewise, for $\CQ_1$,  H\"older's inequality and a change of variables $(\xi,\xi_\ast)\rightarrow(\xi',\xi'_\ast)$
give
\begin{align}
\CQ_1\leq& C\|w^{\ell_\infty}h\|_{L^\infty}\int_{\R^3}
\left(\int_{\R^3\times\S^2}|\xi'_\ast-\xi'|^{2\ga}w^{-2\ell_2}(\xi')w^{-2\ell_2}(\xi'_\ast)d\xi_\ast d\om\right)^{\frac{1}{2}}
\notag\\&\qquad\qquad\qquad\times\left(\int_{\R^3\times\S^2}|(w^{\ell_2}f)(\xi_\ast')(w^{\ell_2}g)(\xi')|^2d\xi_\ast d\om \right)^{\frac{1}{2}}d\xi\notag\\
\leq& C\|w^{\ell_\infty}h\|_{L^\infty}\left(\int_{\R^6\times\S^2}|\xi_\ast-\xi|^{2\ga}w^{-2\ell_2}(\xi)w^{-2\ell_2}(\xi_\ast)d\xi_\ast d\xi d\om\right)^{\frac{1}{2}}
\notag\\&\qquad\qquad\qquad\times\left(\int_{\R^6\times\S^2}|(w^{\ell_2}f)(\xi_\ast')(w^{\ell_2}g)(\xi')|^2d\xi_\ast d\xi d\om\right)^{\frac{1}{2}}
\notag\\
\leq& C\|w^{\ell_\infty}h\|_{L^\infty}\|w^{\ell_2}f\|\|w^{\ell_2}g\|.\notag
\end{align}
We now turn to prove \eqref{es-tri-p2}. As \eqref{Q-tri-exp}, it follows that
\begin{multline}
|( (1-\chi_M)Q(f,g), h)|\leq \int_{\R^3}\int_{\R^3}\int_{\S^2}B_0|\xi_\ast-\xi|^\ga|f(\xi_\ast')g(\xi')((1-\chi_M)h)(\xi)|dwd\xi_\ast d\xi
\notag\\+\int_{\R^3}\int_{\R^3}\int_{\S^2}B_0|\xi_\ast-\xi|^\ga|f(\xi_\ast)g(\xi)((1-\chi_M)h)(\xi)|dwd\xi_\ast d\xi=:\tilde{\CQ}_1+\tilde{\CQ}_2.\notag
\end{multline}
Notice $\ell_2\geq2$. It then follows
\begin{align}
\int_{\R^3}|\xi_\ast-\xi|^{2\ga}w^{-2\ell_2}(\xi_\ast)d\xi_\ast\leq C\lag\xi\rag^{2\ga},\label{ga-de}
\end{align}
which together with H\"older's inequality gives
\begin{align}
\tilde{\CQ_2}\leq& C\|w^{\ell_2}f\|\int_{\R^3}\left(\int_{\R^3}|\xi_\ast-\xi|^{2\ga}w^{-2\ell_2}(\xi_\ast)d\xi_\ast\right)^{\frac{1}{2}}
|[g(1-\chi_M)h](\xi)|d\xi\notag\\
\leq& C\|w^{\ell_2}f\|\|g\|\|h\|.\notag
\end{align}
For $\tilde{\CQ}_1$, by using H\"older's inequality and a change of variables $(\xi,\xi_\ast)\rightarrow(\xi',\xi'_\ast)$, one has
\begin{align}
\tilde{\CQ}_1\leq& C\int_{\R^3}
\left(\int_{\R^3\times\S^2}|\xi'_\ast-\xi'|^{2\ga}w^{-2\ell_2}(\xi')w^{-2\ell_2}(\xi'_\ast)d\xi_\ast d\om\right)^{\frac{1}{2}}
\notag\\&\qquad\qquad\qquad\times\left(\int_{\R^3\times\S^2}|(w^{\ell_2}f)(\xi_\ast')(w^{\ell_2}g)(\xi')|^2d\xi_\ast d\om \right)^{\frac{1}{2}}((1-\chi_M)h)(\xi)d\xi\notag\\
\leq& C\|h\|\left(\int_{\R^6\times\S^2}|(w^{\ell_2}f)(\xi_\ast')(w^{\ell_2}g)(\xi')|^2d\xi_\ast d\xi d\om\right)^{\frac{1}{2}}
\notag\\
\leq& C\|h\|\|w^{\ell_2}f\|\|w^{\ell_2}g\|,\notag
\end{align}
where \eqref{ga-de} has been used again.

Next, if $\vth>0$, one has as for obtaining \eqref{CK-de-exp-p2} that
\begin{align}
\pa^\vth_\xi Q(f,g)=&\sum\limits_{\vth'+\vth''\leq\vth}C_\vth^{\vth',\vth''}
\int_{\R^3\times\S^2}B_0|\xi_\ast-\xi|^\ga (\pa^{\vth'}_\xi f)(\xi_\ast')
(\pa^{\vth''}_\xi g)(\xi') d\om d\xi_\ast
\notag\\&\quad+\sum\limits_{\vth'+\vth''\leq\vth}C_\vth^{\vth',\vth''}
\int_{\R^3\times\S^2}B_0|\xi_\ast-\xi|^\ga
(\pa^{\vth'}_\xi f)(\xi')(\pa^{\vth''}_\xi g)(\xi_\ast')d\om d\xi_\ast
\notag\\&\quad-\sum\limits_{\vth'+\vth''\leq\vth}
\int_{\R^3\times\S^2}B_0|\xi_\ast-\xi|^\ga (\pa^{\vth'}_\xi f)(\xi_\ast)(\pa^{\vth''}_\xi g)(\xi)d\om d\xi_\ast
\notag\\&\quad-\sum\limits_{\vth'+\vth''\leq\vth}
\int_{\R^3\times\S^2}B_0|\xi_\ast-\xi|^\ga (\pa^{\vth'}_\xi f)(\xi)(\pa^{\vth''}_\xi g)(\xi_\ast)d\om d\xi_\ast,\notag
\end{align}
and
\begin{align}
&\pa^\vth_\xi [(1-\chi_M) Q(f,g)]\notag\\
&=\sum\limits_{\vth'+\vth''\leq\vth}C_\vth^{\vth',\vth''}
\pa^{\vth-\vth'-\vth''}_\xi(1-\chi_M)\int_{\R^3\times\S^2}B_0|\xi_\ast-\xi|^\ga (\pa^{\vth'}_\xi f)(\xi_\ast')
(\pa^{\vth''}_\xi g)(\xi') d\om d\xi_\ast
\notag\\&\quad+\sum\limits_{\vth'+\vth''\leq\vth}C_\vth^{\vth',\vth''}
\pa^{\vth-\vth'-\vth''}_\xi(1-\chi_M)\int_{\R^3\times\S^2}B_0|\xi_\ast-\xi|^\ga
(\pa^{\vth'}_\xi f)(\xi')(\pa^{\vth''}_\xi g)(\xi_\ast')d\om d\xi_\ast
\notag\\&\quad-\sum\limits_{\vth'+\vth''\leq\vth}C_\vth^{\vth',\vth''}\pa^{\vth-\vth'-\vth''}_\xi(1-\chi_M)
\int_{\R^3\times\S^2}B_0|\xi_\ast-\xi|^\ga (\pa^{\vth'}_\xi f)(\xi_\ast)(\pa^{\vth''}_\xi g)(\xi)d\om d\xi_\ast
\notag\\&\quad-\sum\limits_{\vth'+\vth''\leq\vth}C_\vth^{\vth',\vth''}\pa^{\vth-\vth'-\vth''}_\xi(1-\chi_M)
\int_{\R^3\times\S^2}B_0|\xi_\ast-\xi|^\ga (\pa^{\vth'}_\xi f)(\xi)(\pa^{\vth''}_\xi g)(\xi_\ast)d\om d\xi_\ast.\notag
\end{align}
Then, performing the similar calculations as in the case of $\vth=0$, we see that both \eqref{es-tri-p1} and \eqref{es-tri-p2} are also valid for $\vth>0$. 
The proof of Lemma \ref{es-tri} is finished.
\end{proof}

\noindent {\bf Acknowledgements:}
RJD was partially supported by the General Research Fund (Project No.~14303321) from RGC of Hong Kong and a Direct Grant from CUHK. SQL was supported by grants from the National Natural Science Foundation of China (contracts: 11971201 and 11731008). This work was also partially supported by the Fundamental Research Funds for the Central Universities.


\end{document}